\newcommand{\changed}[1]{#1}
\newtheorem{lemma}{Lemma}[section]
\newtheorem{corollary}[lemma]{Corollary}
\newtheorem{proposition}[lemma]{Proposition}
\newtheorem{theorem}[lemma]{Theorem}
\newtheorem{remark}[lemma]{Remark}
\newtheorem{definition}[lemma]{Definition}
\newtheorem{construction}[lemma]{Construction}
\newcommand{\Xsl}{X_{s,\lambda}}
\newcommand{\Ysl}{Y_{s,\lambda}}
\newcommand{\col}[1]{\color{blue!70!}{#1} \color{black}{}}
\newcommand{\abs}[1]{\left | #1 \right | }
\newcommand{\norm}[1]{ \left\| #1 \right\| }
\newcommand{\Fcal}{ \mathcal{F} }
\newcommand{\F}{ \mathcal{F} }
\newcommand{\set}[1]{\left\{ #1 \right\}}
\newcommand{\Fsl}{\mathcal{F}_{s,\lambda}}
\newcommand{\curvin}{\frac{1-s}{\lambda}}
\newcommand{\curvout}{-\frac{s}{\lambda}}
\renewcommand{\H}{\mathcal{H}}
\newcommand{\fraceins}{\frac{\abs{S_1}}{P(S_1)}}
\newcommand{\fraczwei}{\frac{\abs{S_2}}{P(S_2)}}
\newcommand{\fracco }{\frac{\abs{S}}{P\left({\rm co}\lr{S}\right)}}
\newcommand{\open}[2]{{\rm {\it Open}}_{#1}\left(#2 \right)}
\newcommand{\close}[2]{{\rm {\it Close}}_{#1}\left(#2 \right)}
\newcommand{\fraceinsi}{\frac{P(S_1)}{\abs{S_1}}}
\newcommand{\fraczweii}{\frac{P(S_2)}{\abs{S_2}} }
\newcommand{\Cs}{{C}_{s,\lambda}}
\newcommand{\tCs}{{\tilde {C}}_{s,\lambda}}
\newcommand{\lr}[1]{\left( #1 \right)}
\newcommand{\rinnen}{\frac{\lambda}{1-s}}
\newcommand{\raussen}{\frac{\lambda}{s}}
\newcommand{\GammaS}{\Gamma_{s,\lambda}\lr{S}}
\def\Carre#1#2{\vbox{
   \hrule height .#2pt
   \hbox{\vrule width .#2pt height #1pt \kern #1pt
      \vrule width .#2pt}
   \hrule height .#2pt}}
\def\R{\mathbb{R}}
\def\N{\mathbb{N}}
\def\H{\mathcal{H}}
\def\Div{\textup{div}\,}
\title{TV denoising of two balls in the plane}
\author{
Vicent Caselles\thanks{
DTIC, Universitat Pompeu--Fabra,
Roc Boronat 138,
08018 Barcelona, Spain}, 
Matteo Novaga\thanks{
Dipartimento di Matematica, Universit\`a di Pisa, Largo Bruno Pontecorvo 5
56127 Pisa, Italy,
e--mail: novaga@dm.unipi.it
},
Christiane  P\"oschl \thanks{
Institute of Mathematics,
Alpen Adria University Klagenfurt,
Universit\"atsstra\ss{}e 65, 9020 Klagenfurt
e--mail: Christiane.Poeschl@aau.at
}}
\date{}
\begin{document}
\maketitle

\begin{abstract}
The aim of this paper is to compute the explicit solution of the total variation denoising problem
corresponding to the characteristic function of a set which is the union of two 
planar disjoint balls with different radii.
\end{abstract}

\section{Introduction}\label{sect:introduction}
The purpose of this paper is to compute explicit solutions of the total variation denoising problem
\begin{equation} \label{vpI}
\min_{u \in BV(\R^2)\cap L^2(\R^2)}
\int_{\R^2} \abs{Du} + \frac{1}{2 \lambda} \int_{\R^2} \abs{u - f }^2 \, dx \;,
\end{equation}
where $f=\chi_S$ and $S \in \R^2$ is the union of two balls whose interiors are disjoint sets and $\lambda > 0$.

The study of explicit solutions of 
\eqref{vpI} 
was initiated in \cite{BeCaNo:00,BeCaNo:05}, where the authors
studied the bounded sets of finite perimeter $S$ in $\R^2$ for which the solution of \eqref{vpI} is a multiple of $\chi_S$.
Such sets, which were called calibrable,
produce solutions of the total variation flow which evolve at constant speed without distortion of the boundary.
They where characterized in \cite{BeCaNo:00} by the existence of a vector field $\xi\in L^\infty(\R^2,\R^2)$ such that $\vert\xi\vert\leq 1$,
$\xi\cdot D\chi_S = |D\chi_S|$, and $-\mathrm{div}\, \xi = \frac{P(S)}{|S|}\chi_S$, where $P(S)$ denotes the perimeter of $S$ and
$|S|$ denotes the area of $S$.
For bounded connected sets $S\subset \R^2$, calibrable sets are characterized as the convex, $C^{1,1}$ sets satisfying the bound $\hbox{\rm ess sup}_{x\in\partial S} \kappa(x) \leq \frac{P(S)}{|S|}$, where $\kappa(x)$ denotes the curvature of $\partial S$ at the point $x$.
The paper \cite{BeCaNo:00} gives also a characterization of non connected calibrable sets in $\R^2$.
The paper \cite{BeCaNo:05} describes the explicit solution of
\eqref{vpI} for sets $S\subset \R^2$ of the form $S = C_0\setminus \cup_{i=1}^k C_i $
where the sets $C_i$, $i=0,1,\ldots,k$, are convex and satisfy some bounds on the curvature of its boundary.
The explicit solution when the set $S$ is a convex subset of $\R^2$ was described in
\cite{Alter.Caselles.Chambolle05b} (also the case of a set $S$ which is a union of convex sets which are
sufficiently far from each other). The explicit solution corresponding to a general convex set in $\R^N$ was described in the papers
\cite{Alter.Caselles.Chambolle05,CCN:06,AC} (covering also the case of the union
of convex sets which are sufficiently far in a precise sense).

When $S$ is the union of two convex sets $S_1,S_2$, the situation
may become more complicated, even when both sets
are calibrable. In particular, if the sets are near each other,
more precisely if the perimeter of its convex hull is smaller
than the sum of the perimeters of the two sets,
then the sets interact, and the solution outside $S$ is not null (for small values of $\lambda$). This is the case, for instance, when
$S$ is the union of two balls in $\R^2$, which is the object of this paper.

Let us mention in this context the work of
Allard \cite{All09} who calculated the solution of \eqref{vpI} when $S$ is the union of two balls with the same radius.
He also computed the solution of \eqref{vpI} when $S$ is the union of two squares.
In this paper we extend the result of Allard \cite{All09} to the case where $S$ is the union of any two balls in $\R^2$.
The interesting case is when the perimeter of the convex hull of $S$ is less than the perimeter of $S$, since otherwise the solution can be described as the sum of the solutions corresponding to each ball \cite{BeCaNo:00}. Our approach differs from the one in \cite{All09} even
for the case of two balls of the same radius. While the solution in \cite{All09} is obtained by a explicit computation, we
describe it in a shorter way by means of more general geometric arguments. Our starting point is the observation that
$u_\lambda$ is a solution of~\eqref{vpI} if and only if the sets $[u_\lambda \geq s]$ minimize the variational problem \cite{CCN-MMS,All09}
\begin{equation}\label{vpsI}
 \Fsl(X):=P(X)
+ \frac{s}{\lambda}\abs{ X \setminus S}
- \frac{(1-s)}{\lambda} \abs{ X \cap S}
\qquad s\in [0,1], \lambda>0\;,
\end{equation}
where $P(X)$ is the perimeter of $X$ (and we understand that $P(X) =+\infty$ if $\chi_X \not \in BV(\R^2)$).
Let us point out that, for $\lambda > 0$ fixed, the solutions of \eqref{vpsI} are monotonically decreasing as $s$ increases,
and can be then packed together to build up a function which solves \eqref{vpI} \cite{Alter.Caselles.Chambolle05,CCN-MMS}.
Thus, to compute the solution of \eqref{vpI} we study the solution of \eqref{vpsI}
and those solutions can be constructed by means of geometric arguments.

On one hand, the Euler-Lagrange equation of \eqref{vpsI} tells us that, if $\Cs$ is a minimizer of \eqref{vpsI}, then
$\partial \Cs$ is $\mathcal{C}^{1,1}$, $\partial \Cs$ has
curvature $\frac{1-s}{\lambda}$ inside $S$ and $-\frac{s}{\lambda}$ outside $S$.
When $S$ is the union of two disjoint open balls $S_1,S_2$ and $\lambda \leq
r_c$, for some value of $r_c  > 0$ that depends on the geometry of $S$, 
we prove that the intersection of $S_i,i=1,2$, with the minimizing sets is either $S_i$ or $\emptyset$.
We also give a counterexample showing that this result is not true for any value of $\lambda$.
Thus, for $\lambda \leq r_c$, the possible minimizers of \eqref{vpsI} are:
$\emptyset, S_1,S_2, S, \close{\raussen}{S}$, where $\close{r}{S}$
denotes the $r$-closing of the set $S$, that is, the complement of the union of the balls of radius $r$ contained in $\R^2\setminus S$.

The computation of explicit examples of TV denoising permits to exhibit qualitative features of the solution. In particular, the appearance of new level lines is a undesirable feature for denoising. 
Better denoising algorithms have been developed in the last few years \cite{buades2006review}.

Let us describe the plan of the paper. 
In Section  \ref{sect:preliminaries} we
review some known results that permit to set the context of our analysis. 

In Section \ref{sect:solutionballs}
we describe the generic properties of the minimizers $C_{s,\lambda}$ of \eqref{vpsI} and we prove that, if $S$ is the union of two balls
and $\lambda \leq r_c$, then
the intersection of $C_{s,\lambda}$ with $S_i$, $i=1,2$, is either $S_i$ or the
empty set. 

\changed{
This permits to
reduce the set of possible minimizers of \eqref{vps} to the following six ones: $\emptyset, S_1,S_2, S, \close{\raussen}{S}, \Gamma_{s,\lambda}(S)$.
In Section \ref{sect:construction} we explain how to construct the sets $\Gamma_{s,\lambda}(S)$ as well as how to construct the explicit solutions for 
\eqref{vpI}. The proof of the main theorem can be found in Section \ref{sect:proof}.
In Section \ref{sect:dualnorm} we describe how to calculate the dual norm of the function $\chi_S$. 
}

\smallskip

\noindent {\bf Acknowledgements.} 
M. Novaga was partially supported by the Italian INDAM-GNAMPA and by the University of Pisa via grant PRA-2015-0017.
The work of C. P\"oschl was supported by the Austrian Science Fund
(FWF): Projects J-2970 (Schr\"odinger scholarship), T644-N26
(Hertha-Firnberg-fellowship).

This paper was inspired by our coauthor and friend Vicent Caselles.
His passion and his strong motivation were a continuous stimulus in our research, and we dedicate this work to his memory.

\section{Preliminaries}\label{sect:preliminaries}

\subsection{Total variation and perimeter}

Let $\Omega$ be an open subset of $\R^2$. A function $u\in
L^1(\Omega)$ whose gradient $Du$ in the sense of distributions is
a (vector valued) Radon measure with finite total variation in
$\Omega$ is called a function of bounded variation. The class of
such functions will be denoted by $BV(\Omega)$. The total
variation of $Du$ on $\Omega$ turns out to be
\begin{equation*}
\sup \left\{\int_\Omega u~ \Div z~dx : z \in
C^\infty_0(\Omega; \R^2), \Vert z \Vert_{L^\infty(\Omega)} := {\rm
ess}\sup_{\!\!\!\!\!x \in \Omega} \vert z(x)\vert \leq 1\right\},
\end{equation*}
and will be denoted by $\vert
Du\vert (\Omega)$ or by $\int_\Omega \vert Du\vert$. $BV(\Omega)$ is a Banach
space when endowed with the norm $\int_\Omega \vert u \vert ~dx +
\vert Du\vert (\Omega)$.

Let us denote by $\H^{1}$ the one-dimensional
Hausdorff measure.

A measurable set $E \subseteq \R^2$ is said to have finite
perimeter if $\chi_{E}\in BV(\R^2)$. 
The perimeter of $E$ is defined as $P(E):= \vert D
\chi_{E} \vert (\R^2)$. We recall that when $E$ is a finite-perimeter set
with regular boundary (for instance, Lipschitz),
its perimeter $P(E)$ also coincides with the more standard definition
$\H^{1}(\partial E)$.
For more properties and references on functions
of bounded variation we refer to \cite{AmbFusPal00}. We also mention
the following review papers on applications to image analysis and denoising.
\cite{buades2006review,ChaCasCre10,CasChaNov11}.

\subsection{Morphological operators}

\begin{definition}[Opening and Closing operators]
For any set $X$ and $r>0$, let $B_r(x)$ be a ball with radius $r$ and center
$x$. 
We define the \textit{opening} and  the \textit{closing}  
of $X$, with radius $r$, respectively  by
\begin{eqnarray*}
 \open{r}{X}&:=&\bigcup_{x: B_r(x) \subset X} B_r(x)\;,
 \\
\close{r}{X}&:=& \lr{\open{r}{X^C}}^C\;,
\end{eqnarray*}
where $X^C$ denotes the complement of the set
$X$.
\end{definition}

The opening operator is anti-extensive ($\open{r}{X} \subset X$), 
conserves the subset property ($X\subset Y$ then $\open{r}{X} \subset
\open{r}{Y}$) and
is idempotent ($\open{r}{X} = \open{r}{ \open{r}{X}}$).
For more on application of morphological operators we refer to \cite{Soi03}.
Later we need the following properties of the opening and closing operator.

\begin{lemma}[Properties of the Opening and Closing operator]
\label{le:propertiesOpeningClosing}
Let $S$ be an arbitrary set. 
The curvature of $\partial\close{r}{X}$ is larger or equal to $-\frac{1}{r}$,
the curvature of $\partial\open{r}{X}$ is less or equal to $\frac{1}{r}$.
Consequently the curvatures of $\partial \close{r}{X} \setminus X, \partial
\open{r}{S} \cap S$ are 
$-\frac{1}{r}, \frac{1}{r}$ respectively.
Moreover, if $\close{r}{X} \not =X$, then $\min\set{\kappa(\partial
X)}<\frac{1}{r}$. 
If $\open{r}{X} \not = X$, then $\max\set{ \kappa(\partial X)}>\frac{1}{r}$.
\end{lemma}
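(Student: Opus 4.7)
The four assertions reduce to the internally tangent ball characterization of the opening (and, by the duality $\close{r}{X}=\lr{\open{r}{X^C}}^C$, of the closing). Fix $p\in\partial\open{r}{X}$. Because $\open{r}{X}$ is open and $p$ lies in its closure, pick $y_n\in\open{r}{X}$ with $y_n\to p$; by definition each $y_n\in B_r(x_n)$ for some $x_n$ with $B_r(x_n)\subset X$. Extracting a subsequence yields $x_n\to x_0$, $\overline{B_r(x_0)}\subset\overline{X}$ and $\vert x_0-p\vert\le r$. Since $p\notin\open{r}{X}$, we must have $\vert x_0-p\vert=r$, so $p\in\partial B_r(x_0)$. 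This closed ball lies in $\overline{\open{r}{X}}$ and is internally tangent to $\partial\open{r}{X}$ at $p$; a standard curvature comparison (a sharper inward bend of $\partial\open{r}{X}$ would cut into $B_r(x_0)\subset\open{r}{X}$) then gives $\kappa(\partial\open{r}{X},p)\le 1/r$. Applying the same reasoning to $X^C$ yields $\kappa(\partial\close{r}{X},p)\ge -1/r$.

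\emph{Exact value on the free part.} Suppose $p\in\partial\open{r}{S}\cap S$ with $S$ open. The tangent ball $\overline{B_r(x_0)}$ then sits compactly inside $S$ near $p$. I would show that on a neighborhood $U$ of $p$ one has $\partial\open{r}{S}\cap U=\partial B_r(x_0)\cap U$: a point $q\in U$ lying on the outer side of $\partial B_r(x_0)$ cannot belong to $\open{r}{S}$, for if $q\in B_r(y)\subset S$ then sliding the center continuously from $y$ to $x_0$ inside $S$ (possible because $p$ is strictly interior to $S$) produces a ball of radius $r$ contained in $S$ whose interior contains $p$, contradicting $p\in\partial\open{r}{S}$. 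Hence locally $\partial\open{r}{S}$ is an arc of $\partial B_r(x_0)$ and its curvature equals $1/r$. The corresponding statement for $\partial\close{r}{X}\setminus X$ follows verbatim by passing to complements.

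\emph{Converse implications and the main obstacle.} Assume $\open{r}{X}\ne X$ and pick $q\in X\setminus\open{r}{X}$, so no ball $B_r(y)\subset X$ contains $q$. If $\max_{\partial X}\kappa\le 1/r$ held, then $\partial X$ would satisfy the interior ball condition of radius $r$ at each point, and a nearest-point argument (take $q^{\ast}\in\partial X$ realizing $\dist(q,\partial X)$; the ball of radius $r$ internally tangent at $q^{\ast}$ lies in $X$ and contains $q$ whenever $\dist(q,q^{\ast})<r$) would force $q\in\open{r}{X}$, a contradiction; hence $\max_{\partial X}\kappa>1/r$. The dual argument applied to $X^C$ yields $\min_{\partial X}\kappa<-1/r$ (I read the ``$<1/r$'' stated in the lemma as a typographical slip for ``$<-1/r$''). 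The most delicate step throughout is the exact-value claim of paragraph two: showing that no other admissible ball in $S$ fills the exterior side of $\partial B_r(x_0)$ near $p$. The sliding argument rests on the closedness of the set of admissible centers $\{x:B_r(x)\subset S\}$ and on the strict interior hypothesis $p\in S$, which leaves room for the deformation without exiting $S$.
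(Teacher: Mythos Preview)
Your approach is essentially the same as the paper's --- the tangent-ball-from-inside comparison --- but you carry it much further. The paper's own proof is literally two sentences: it argues by contradiction that a boundary point of $\open{r}{X}$ with curvature exceeding $1/r$ would admit no interior tangent disc of radius $r$, contradicting the definition, and then declares ``the second estimate is analog.'' It does not address the exact-value claim on the free part or the converse ``Moreover'' implications at all. Your limit construction of the tangent ball and your nearest-point argument for the converse are reasonable expansions of what the paper leaves implicit; your observation that the stated bound $\min\kappa(\partial X)<1/r$ should read $<-1/r$ is well taken, since the duality $\close{r}{X}=(\open{r}{X^C})^C$ flips the sign of the curvature. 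One caution: in your ``exact value'' paragraph the sliding argument is informal --- the continuous path of admissible centers from $y$ to $x_0$ inside $\{x:B_r(x)\subset S\}$ need not exist in general (that set can be disconnected), so you are implicitly using more regularity of $S$ than stated; and in the converse argument the passage from $\max\kappa\le 1/r$ to a uniform interior $r$-ball condition is a nontrivial step requiring $\partial X$ to be at least $C^{1,1}$. These are the same regularity assumptions the paper silently relies on throughout, so within the intended scope your argument is sound.
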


\begin{proof}
Assume that there exists a point $A \in \open{r}{X}$, with curvature
$>\frac{1}{r}$, 
then there exists no circle touching $\open{r}{X}$ at $A$ that lies 
inside $\open{r}{X}$, this contradicts the definition of the opening operator, 
hence we can conclude that the curvature of $\partial \open{r}{X}$ is smaller or
equal to $\frac{1}{r}$.
The proof of the second estimate is analog.
\end{proof}

\subsection{Review of some basic results}\label{sect:R}

The following result was proved in \cite{All09,CCN-MMS}.

\begin{proposition}\label{cc}
Let $S  \subset \R^2$ be a bounded measurable set. Then there is a unique solution $u_\lambda$ of
(\ref{vpI}), which satisfies the Euler-Lagrange equation
\begin{equation}\label{euler}
u_\lambda - \lambda\; \mathrm{div}\, z = \chi_S \qquad \hbox{\rm in $\R^2$},
\end{equation}
where $z:\R^2\to\R^2$ is such that $\Vert z\Vert_\infty\leq 1$
and $z\cdot Du_\lambda = |Du_\lambda|$.

Moreover, for any $s\in \R$,
$\{u_\lambda \geq s\}$ (resp. $\{u_\lambda > s\}$) is the maximal (resp. the minimal) solution of
\begin{equation}\label{vps}
\min_{X \subset \R^2} \Fsl(X) := P(X) + \frac{s}{\lambda} \vert X
\setminus S\vert - \frac{(1-s)}{\lambda} \vert X \cap S\vert.
\end{equation}
In particular, for all $t$ but a countable set the solution of (\ref{vps}) is unique.

Conversely, for any $s\in\R$, let $Q_s$ be a solution of (\ref{vps}). If $s > s'$, then
$Q_s \subseteq Q_{s'}$. The function
$$
u(x) = \sup \{s: x \in Q_s\}
$$
is the solution of (\ref{vpI}).
\end{proposition}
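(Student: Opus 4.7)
The plan is to split the statement into its three parts: (i) well-posedness of \eqref{vpI} and the Euler--Lagrange equation, (ii) the forward implication that level sets of $u_\lambda$ solve \eqref{vps}, with the maximal/minimal characterization, and (iii) the reverse implication that stacking solutions of \eqref{vps} recovers a minimizer of \eqref{vpI}. The engine linking everything together is a layer-cake decomposition of the functional in \eqref{vpI} into the family $\Fsl$.

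For part (i), existence and uniqueness of $u_\lambda$ follow by the direct method. The functional is lower semicontinuous on $BV(\R^2)\cap L^2(\R^2)$ with respect to $L^1_{\mathrm{loc}}$ convergence (by lower semicontinuity of the total variation), it is coercive thanks to the squared $L^2$ fidelity term, and it is strictly convex in $L^2$, which gives uniqueness. Moreover since $f=\chi_S\in[0,1]$, a truncation argument shows $0\le u_\lambda\le 1$ a.e. For the Euler--Lagrange equation, I would apply subdifferential calculus: the subdifferential of $u\mapsto \int|Du|$ at $u_\lambda$ in $L^2$ consists precisely of distributions of the form $-\Div z$ with $z\in L^\infty(\R^2;\R^2)$, $\|z\|_\infty\le 1$, and $z\cdot Du_\lambda=|Du_\lambda|$; setting $0\in\partial F(u_\lambda)$ gives \eqref{euler}.

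For the passage to \eqref{vps}, I would prove the layer-cake identity
\begin{equation*}
\int_{\R^2}|Du| + \frac{1}{2\lambda}\int_{\R^2}(u-\chi_S)^2\,dx
= \int_0^1 \Fsl(\{u>s\})\,ds + \frac{|S|}{2\lambda}
\end{equation*}
for every $u\colon\R^2\to[0,1]$ in $BV\cap L^2$. The perimeter part is the coarea formula; the fidelity part follows from the pointwise identity $\tfrac{1}{2}(u-\chi_S)^2=\int_0^1 (s-\chi_S)\chi_{\{u>s\}}\,ds+\tfrac{1}{2}\chi_S$, which after integration yields the $(s/\lambda)|X\setminus S|-((1-s)/\lambda)|X\cap S|$ terms in $\Fsl$. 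A key auxiliary fact I need is submodularity of $\Fsl$, namely $\Fsl(X)+\Fsl(Y)\ge \Fsl(X\cup Y)+\Fsl(X\cap Y)$, which follows from submodularity of the perimeter and additivity of the volume terms; this gives a comparison principle for \eqref{vps} and shows that if $s>s'$ then any minimizer of $\Fsl$ is contained in any minimizer of $\mathcal{F}_{s',\lambda}$.

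Given the decomposition, the level sets $\{u_\lambda>s\}$ must minimize $\Fsl$ for a.e.\ $s$, otherwise one could improve $u_\lambda$ pointwise below the corresponding level and violate minimality in \eqref{vpI}. Monotonicity of $s\mapsto\{u_\lambda>s\}$ and the comparison principle then extend this to every $s$, with $\{u_\lambda\ge s\}=\bigcap_{s'<s}\{u_\lambda>s'\}$ identified as the maximal minimizer and $\{u_\lambda>s\}=\bigcup_{s'>s}\{u_\lambda\ge s'\}$ as the minimal one. For the converse, given any selection $Q_s$ of minimizers, monotonicity ensures $u(x):=\sup\{s:x\in Q_s\}$ is a well-defined $BV$ function whose superlevel sets are (essentially) the $Q_s$; applying the layer-cake identity in the other direction shows $u$ achieves the minimum of \eqref{vpI}, hence equals $u_\lambda$ by uniqueness. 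Finally, the countability of the exceptional set of $s$ is a standard consequence of monotonicity: the set where the maximal and minimal solutions differ corresponds to jumps of the monotone set-valued map $s\mapsto\{u_\lambda\ge s\}$, and there are at most countably many jumps of a monotone family of sets of bounded measure. The main obstacle will be making the first passage (from a.e.\ $s$ to every $s$) rigorous in the presence of possible non-uniqueness at exceptional levels, which is where the submodularity-based comparison principle does the essential work.
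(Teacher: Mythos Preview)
The paper does not give its own proof of this proposition: it is stated as a known result and attributed to \cite{All09,CCN-MMS}. So there is no ``paper's proof'' to compare against. Your outline is essentially the standard argument that appears in those references (coarea/layer-cake decomposition of the functional, submodularity of perimeter for the comparison principle, and reconstruction of $u_\lambda$ from a monotone family of level-set minimizers), and it is correct in substance.

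One small point worth tightening: you assert that ``if $s>s'$ then any minimizer of $\Fsl$ is contained in any minimizer of $\mathcal{F}_{s',\lambda}$.'' This is true, but it does not follow from submodularity alone; you also need the strict monotonicity in $s$ of the volume term. Concretely, writing $\Fsl(X)=\mathcal{F}_{s',\lambda}(X)+\tfrac{s-s'}{\lambda}|X|$, submodularity plus minimality gives $\Fsl(Q_s\cup Q_{s'})\le \Fsl(Q_{s'})$, and combining with $\mathcal{F}_{s',\lambda}(Q_{s'})\le \mathcal{F}_{s',\lambda}(Q_s\cup Q_{s'})$ forces $|Q_s\setminus Q_{s'}|=0$ since $s>s'$. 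Making this explicit closes the only real gap in your sketch; the rest (truncation to $[0,1]$, the pointwise layer-cake identity you wrote, countability of the exceptional levels via monotonicity of $s\mapsto |\{u_\lambda\ge s\}|$) is fine.
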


Thus, in order to build up the solution of (\ref{vpI}) it suffices to compute the solutions
of the family of problems (\ref{vps}). This will be the strategy we follow to compute the explicit solution
when $S$ is the union of two balls.

Recall that if $g \in L^2(\R^2)$ the dual $BV$-norm of $g$ is given by
$$
\Vert g\Vert_* = \sup_{u\in BV(\R^2), \vert Du\vert (\R^2)\leq 1}
\int_{\R^2} g u \, dx.
$$
Then $\Vert g\Vert_* \leq 1$ if and only if $$\int_{\R^2} g u \,
dx \leq \int_{\R^2} \vert Du\vert$$ for any $u\in BV(\R^2)$. This
is equivalent to say that
$$\int_{F} g \leq P(F), \qquad \hbox{for any set $F$ of finite perimeter.}$$

Let us first recall a result that permits to compute the value of $\lambda$ for which
the solution $u_\lambda=0$. The result was proved in \cite{Meyer,BeCaNo:00}.

\begin{proposition}\label{p2} (\cite{BeCaNo:00})
Let $g\in L^2(\R^2)$. Let us consider the problem:
\begin{equation}\label{vp1}
\min_{u \in BV(\R^2)\cap L^2(\R^2)}\, \F_\lambda(g)
\end{equation}
where
\begin{equation}\label{eqfla}
\F_\lambda(g):= \int_{\R^2} 
\vert Du\vert + \frac{1}{2\lambda} \int_{\R^2} \vert u - g\vert^2\, dx.
\end{equation}
The following conditions are equivalent
\begin{itemize}
\item[{\rm (i)}] $u=0$ is a solution of \eqref{vp1}.
\item[{\rm (ii)}]  $\Vert
g\Vert_* \leq \lambda$
\item[{\rm (iii)}]  There is  a
vector field $\xi\in L^\infty(\R^2,\R^2)$, $\Vert
\xi\Vert_\infty\leq 1$ such that $-\mathrm{div} \,\xi = g$.
\end{itemize}
\end{proposition}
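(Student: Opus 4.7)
The plan is to prove the cycle (i)$\Rightarrow$(ii)$\Rightarrow$(iii)$\Rightarrow$(i). The central tool is the duality between the total variation seminorm and the divergence of bounded vector fields.

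For (i)$\Rightarrow$(ii) I would argue by first variation at $u=0$. If $u=0$ minimizes $\mathcal{F}_\lambda$, then for every $v\in BV(\R^2)\cap L^2(\R^2)$ and every $t>0$, $\mathcal{F}_\lambda(0)\leq \mathcal{F}_\lambda(tv)$. Expanding the quadratic, cancelling $\frac{1}{2\lambda}\int g^2$, and dividing by $t$ leaves
$$\frac{1}{\lambda}\int_{\R^2} vg\,dx \leq \int_{\R^2}|Dv| + \frac{t}{2\lambda}\int_{\R^2} v^2\,dx.$$
Letting $t\to 0^+$ gives $\int vg\,dx\leq \lambda\int |Dv|$ for every $v$, which by definition of $\|\cdot\|_*$ is exactly (ii).

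For the equivalence (ii)$\Leftrightarrow$(iii) I would invoke convex duality. Condition (ii) says the linear functional $v\mapsto \int vg\,dx$ is dominated by $\lambda\cdot\mathrm{TV}$ on $BV$; modulo the natural $\lambda$-rescaling $\xi\leftrightarrow \lambda\xi$, which matches the Euler-Lagrange equation of Proposition~\ref{cc} at $u_\lambda=0$ (namely $g=-\mathrm{div}(\lambda z)$ with $\|z\|_\infty\leq 1$), the Hahn--Banach theorem produces $\xi\in L^\infty(\R^2,\R^2)$ with the stated sup-norm bound and $-\mathrm{div}\,\xi=g$ in distributions. Equivalently, this is the Legendre--Fenchel identification of the polar set of the TV unit ball with the divergence image of the $L^\infty$ unit ball (Meyer's $G$-norm).

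For (iii)$\Rightarrow$(i), given such $\xi$ I would integrate by parts via the Anzellotti pairing: for every $u\in BV\cap L^2$,
$$\int_{\R^2} ug\,dx = -\int_{\R^2} u\,\mathrm{div}\,\xi\,dx = \int_{\R^2}\xi\cdot Du \leq \lambda\int_{\R^2}|Du|.$$
Substituting into $\mathcal{F}_\lambda(u)-\mathcal{F}_\lambda(0)= \int|Du|+\frac{1}{2\lambda}\int u^2 - \frac{1}{\lambda}\int ug$ gives $\mathcal{F}_\lambda(u)-\mathcal{F}_\lambda(0)\geq \frac{1}{2\lambda}\int u^2\geq 0$, so $u=0$ minimizes. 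The main obstacle is the Hahn--Banach step in (ii)$\Rightarrow$(iii): one must verify that the abstract extension of $v\mapsto \int vg\,dx$ beyond gradients really produces an element of $L^\infty(\R^2,\R^2)$, which requires continuity of the functional in the appropriate weak-$\ast$ sense. A secondary subtlety is making sense of $\int \xi\cdot Du$ when $u$ is merely $BV$, which is the content of the Anzellotti chain rule.
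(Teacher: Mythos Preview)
The paper does not give its own proof of this proposition: it is stated as a known result and attributed to \cite{Meyer,BeCaNo:00}. So there is no in-paper argument to compare against.

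Your proof outline is correct and is the standard one. The cycle (i)$\Rightarrow$(ii)$\Rightarrow$(iii)$\Rightarrow$(i) with first variation, Hahn--Banach/Fenchel duality, and Anzellotti pairing is exactly how this is done in the references. You also correctly flag the two genuine technical points (representing the extension as an $L^\infty$ vector field, and making sense of $\int \xi\cdot Du$ for $u\in BV$), and you correctly notice the $\lambda$-scaling discrepancy in (iii) as stated: with $\Vert\xi\Vert_\infty\le 1$ and $-\mathrm{div}\,\xi=g$ one gets $\Vert g\Vert_*\le 1$, not $\le\lambda$, so (iii) should read $\Vert\xi\Vert_\infty\le\lambda$ (equivalently $-\lambda\,\mathrm{div}\,\xi=g$ with $\Vert\xi\Vert_\infty\le 1$), consistent with the Euler--Lagrange equation in Proposition~\ref{cc}. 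Nothing further to add.
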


The following result has been proved in \cite{Giusti:78,BeCaNo:00,Alter.Caselles.Chambolle05b,KLR}.

\begin{theorem}\label{evol_shapes} 
Let $C \subset \R^2$ be a bounded set of finite perimeter, and
assume that $C$ is connected. Let $\gamma > 0$. The following
conditions are equivalent:
\begin{itemize}
\item[(i)] $C$ decreases at speed $\gamma$, i.e, for any $\lambda > 0$ $u_\lambda :=
\left(1 - \lambda \gamma \right)^+ \chi_C(x)$ is the solution of
(\ref{vpI}) corresponding to $\chi_C(x)$.
\item[(ii)] $C$ is convex, $\gamma = \gamma_C := \frac{P(C)}{|C|}$ and minimizes the
functional $$ {\cal G}_{\gamma_C}(X) := P(X) - \gamma_C \vert
X\vert, \qquad X \subseteq C, \ X ~{of~ finite~ perimeter}. $$
\item[(iii)] $C$ is convex, $\partial C$ is of class $C^{1,1}$,
$\gamma = \gamma_C := \frac{P(C)}{|C|}$, and the following inequality holds:

\begin{equation*}
{\rm ess} \displaystyle \sup_{\!\!\!\!\!\!p\in
\partial C} \kappa_{\partial C}(p) \leq \gamma_C,
\end{equation*}
where $\kappa_{\partial C}(p)$ denotes the curvature of $\partial
C$ at the point $p$.
\end{itemize}
\end{theorem}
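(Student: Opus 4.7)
The plan is a cyclic proof $(i) \Rightarrow (ii) \Rightarrow (iii) \Rightarrow (i)$, pivoting throughout on the level-set reformulation of Proposition \ref{cc} which converts statements about $u_\lambda$ into statements about geometric minimizers of $\Fsl$.

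For $(i) \Rightarrow (ii)$, I would substitute $u_\lambda = (1-\lambda\gamma)^+\chi_C$ into Proposition \ref{cc}: for every $s \in \R$ the superlevel set $\{u_\lambda \ge s\}$ must minimize $\Fsl$ with $S = C$. Setting $s_* := 1 - \lambda\gamma$ and choosing $\lambda$ small so that $s_*>0$, this superlevel set equals $C$ for $0 < s < s_*$ and $\emptyset$ for $s > s_*$. Requiring both to be minimizers at the threshold $s_*$ forces $\Fsl(C) = P(C) - \frac{1-s_*}{\lambda}|C| = 0$, giving $\gamma = \gamma_C$. Since $\Fsl(X) = P(X) - \frac{1-s}{\lambda}|X|$ whenever $X \subseteq C$, the minimality of $C$ over subsets passes in the limit $s \uparrow s_*$ to the $\mathcal{G}_{\gamma_C}$ inequality in $(ii)$. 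Convexity follows by testing the minimality of $C$ (now against global competitors, with $s$ taken very small) against the convex hull $\tilde C$: one computes $\Fsl(\tilde C) - \Fsl(C) = (P(\tilde C) - P(C)) + \frac{s}{\lambda}(|\tilde C| - |C|)$, which is strictly negative for small $s$ unless $P(C) = P(\tilde C)$, i.e.\ unless $C = \tilde C$ is convex.

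For $(ii) \Leftrightarrow (iii)$, the $C^{1,1}$ regularity and curvature bound are standard Euler--Lagrange consequences of the minimization in $(ii)$: inner variations of $\partial C$ supported on smooth arcs give $\kappa_{\partial C} \leq \gamma_C$ essentially, and quasi-minimality in the sense of De Giorgi gives $C^{1,1}$ regularity of a set of prescribed mean curvature. Conversely, $(iii) \Rightarrow (ii)$ is proved by constructing a calibrating field $\xi : C \to \R^2$ with $|\xi|\leq 1$, $-\mathrm{div}\,\xi \equiv \gamma_C$ in $C$, and $\xi \cdot \nu = 1$ on $\partial C$, where $\nu$ denotes the outer unit normal. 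The divergence theorem then yields $\gamma_C|X| = -\int_X \mathrm{div}\,\xi \leq P(X)$ for any $X \subseteq C$, with equality at $X = C$. To close the cycle with $(iii) \Rightarrow (i)$, I would extend $\xi$ to $\R^2 \setminus C$ with $|\xi|\leq 1$ and $\mathrm{div}\,\xi = 0$, and directly verify that $u_\lambda := (1-\lambda\gamma_C)^+\chi_C$ together with $z := \xi$ satisfies the Euler--Lagrange equation \eqref{euler}: $u_\lambda - \lambda\,\mathrm{div}\,z = (1-\lambda\gamma_C)\chi_C + \lambda\gamma_C\chi_C = \chi_C$, $\|z\|_\infty \leq 1$ by construction, and $z \cdot Du_\lambda = |Du_\lambda|$ since $z$ agrees with the outer normal on $\partial C$; uniqueness in Proposition \ref{cc} then identifies this as $u_\lambda$.

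The main obstacle is the construction of the calibrating field $\xi$ inside $C$ in $(iii) \Rightarrow (ii)$: one must solve $-\mathrm{div}\,\xi = \gamma_C$ with $\xi \cdot \nu = 1$ on $\partial C$ while preserving $|\xi|\leq 1$ globally. The curvature bound $\kappa_{\partial C} \leq \gamma_C$ supplies enough divergence budget right at the boundary, and the convexity of $C$ ensures that the inward normal flow does not develop focal points before sufficient divergence has been accumulated; this is the geometric heart of the equivalence and the classical content of \cite{BeCaNo:00,Alter.Caselles.Chambolle05b}.
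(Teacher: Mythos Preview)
The paper does not give its own proof of Theorem~\ref{evol_shapes}; it is quoted as a known result from \cite{Giusti:78,BeCaNo:00,Alter.Caselles.Chambolle05b,KLR}. Your cyclic scheme $(i)\Rightarrow(ii)\Rightarrow(iii)\Rightarrow(i)$, based on Proposition~\ref{cc} and a calibrating vector field, is indeed the architecture of those references, so in that sense your outline is on target.

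There is, however, a genuine gap in your derivation of convexity in $(i)\Rightarrow(ii)$. You test $C$ against $\tilde C=\mathrm{co}(C)$ and assert that
\[
\Fsl(\tilde C)-\Fsl(C)=(P(\tilde C)-P(C))+\tfrac{s}{\lambda}\bigl(|\tilde C|-|C|\bigr)
\]
is strictly negative for small $s$ unless $P(\tilde C)=P(C)$. This presumes $P(\mathrm{co}(C))\le P(C)$ for connected $C$, which is false in $\R^2$: remove from the unit disk a sector of opening $\alpha\in(2,\pi)$; the resulting ``Pac-Man'' is connected with $P(C)=(2\pi-\alpha)+2<2\pi=P(\mathrm{co}(C))$, and your comparison yields no contradiction. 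The correct mechanism is the \emph{lower} curvature bound coming from global minimality: since by Proposition~\ref{cc} the set $C$ minimizes $\Fsl$ (with $S=C$) among \emph{all} finite-perimeter sets for every $s\in(0,1-\lambda\gamma)$, outward perturbations at any point of $\partial C$ force $\kappa_{\partial C}\ge -s/\lambda$; letting $s\downarrow 0$ gives $\kappa_{\partial C}\ge 0$, hence convexity.

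A minor further correction: your calibration has a sign inconsistency. With $-\mathrm{div}\,\xi=\gamma_C$ in $C$, the divergence theorem gives $\int_{\partial C}\xi\cdot\nu=-\gamma_C|C|=-P(C)$, which forces $\xi\cdot\nu=-1$ on $\partial C$ (inner normal), not $+1$; the same sign is required for $z\cdot Du_\lambda=|Du_\lambda|$, since $D\chi_C=-\nu\,\H^1\restr\partial C$. With this fix your verification of \eqref{euler} and the closing step $(iii)\Rightarrow(i)$ are correct as sketched.
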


For all $r\in\R$, we set $r^+ := \max\set{0,r}$.
The following result has been proved in \cite[Theorem 7 and Proposition 8]{BeCaNo:00}.

\begin{lemma}\label{destruccio}
Let $S_1,S_2\subset  \R^2$ be two disjoint balls, let $S=S_1\cup S_2$ and $f=\chi_{S}$.
Then  $$u_\lambda=
\left(1-\frac{P(S_1)}{|S_1|}\lambda \right)^+ \chi^{}_{S_1} +
\left(1-\frac{P(S_2)}{|S_2|}\lambda \right)^+ \chi^{}_{S_2}$$
is a solution of (\ref{vpI}) for any $\lambda > 0$ if and only if
\begin{equation}\label{lastRR}
P(S)\leq P(\mathrm{co}(S))\;,
\end{equation}
where $\mathrm{co}(S)$ denotes the convex envelope of $S$.
In other words, the solution of (\ref{vpI})
is the sum of the two solutions corresponding to $\chi_{S_1}$ and $\chi_{S_2}$
if and only if (\ref{lastRR}) holds.
\end{lemma}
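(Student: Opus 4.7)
The plan is to use Proposition~\ref{cc} to reduce the identification of the candidate
\[
v_\lambda := (1-\gamma_1\lambda)^+\chi_{S_1} + (1-\gamma_2\lambda)^+\chi_{S_2}, \qquad \gamma_i := \frac{P(S_i)}{|S_i|} = \frac{2}{r_i},
\]
as the solution of~\eqref{vpI} to a family of geometric minimizations~\eqref{vps}: $v_\lambda$ solves~\eqref{vpI} iff, for every $s\in\R$, the superlevel set $\{v_\lambda\geq s\}$ minimizes $\Fsl$. Since $v_\lambda$ takes at most three distinct nonnegative values, these superlevel sets reduce (modulo null sets) to four candidates: $\emptyset$, one of the balls $S_i$, the union $S$, and (for $s\leq 0$) the full plane.

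For the \emph{necessity} direction, suppose $v_\lambda$ solves~\eqref{vpI} for every $\lambda>0$. Pick $\lambda$ small enough that both factors $(1-\gamma_i\lambda)^+$ are strictly positive, so that $\{v_\lambda>0\}=S$. By Proposition~\ref{cc}, $S$ must minimize $\mathcal{F}_{0,\lambda}(X) = P(X) - |X\cap S|/\lambda$; testing against $\mathrm{co}(S)\supseteq S$ yields
\[
P(S) - \frac{|S|}{\lambda} \;=\; \mathcal{F}_{0,\lambda}(S) \;\leq\; \mathcal{F}_{0,\lambda}(\mathrm{co}(S)) \;=\; P(\mathrm{co}(S)) - \frac{|S|}{\lambda},
\]
which is exactly~\eqref{lastRR}.

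For \emph{sufficiency}, assume~\eqref{lastRR}. Each $S_i$ is convex, of class $C^{1,1}$, with constant boundary curvature $\gamma_i$, so Theorem~\ref{evol_shapes} shows that $S_i$ is calibrable; equivalently, $S_i$ minimizes $\Fsl$ at the threshold $s=1-\gamma_i\lambda$ among its subsets, with $\emptyset$ winning above this threshold. The remaining task is to promote these partial minimizations to minimizations over arbitrary finite-perimeter sets in $\R^2$. I expect the main obstacle to be ruling out ``bridging'' competitors $X$ that connect $S_1$ to $S_2$ without being contained in either ball. The Euler--Lagrange equation associated with~\eqref{vps} forces $\partial X$ to consist of circular arcs of curvature $(1-s)/\lambda$ inside $S$ and $-s/\lambda$ outside, which combined with Lemma~\ref{le:propertiesOpeningClosing} localizes any such bridge inside the closing $\close{\raussen}{S}$. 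The direct computation
\[
\Fsl(\mathrm{co}(S)) - \Fsl(S) = P(\mathrm{co}(S)) - P(S) + \frac{s}{\lambda}\,|\mathrm{co}(S)\setminus S| \;\geq\; 0
\]
holds for $s\geq 0$ under~\eqref{lastRR}, so $S$ already beats its convex hull; the hard step is to extend this comparison to every bridging competitor, which I would attempt by first replacing $X$ by its closing and then exploiting~\eqref{lastRR} together with the curvature rigidity above.
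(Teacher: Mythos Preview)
The paper does not actually prove Lemma~\ref{destruccio}; it quotes the result from \cite[Theorem~7 and Proposition~8]{BeCaNo:00}. So there is no in-paper argument to compare against, and your proposal must be judged on its own.

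Your necessity direction is correct: once $\lambda$ is small enough that both coefficients are positive, $\{v_\lambda>0\}=S$ must be a minimizer of $\mathcal F_{0,\lambda}$ by Proposition~\ref{cc}, and comparison with $\mathrm{co}(S)$ gives~\eqref{lastRR} immediately. (If you want to avoid the borderline value $s=0$, the same comparison at small $s>0$ followed by $s\to 0^+$ works.)

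Your sufficiency direction, however, is not a proof---you say so yourself. You reduce the problem correctly to showing that $\emptyset$, $S_1$, $S_2$ or $S$ minimizes $\Fsl$ for every $s\in(0,1]$, but then you only sketch an attack (``the hard step is to extend this comparison to every bridging competitor, which I would attempt by\ldots''). Two concrete issues with the sketch:
\begin{itemize}
\item Lemma~\ref{le:propertiesOpeningClosing} gives curvature bounds for openings and closings; it does not by itself localize a minimizer inside $\close{\lambda/s}{S}$. The structural statement you actually need is Proposition~\ref{lemma:gg}, which restricts minimizers to $\{\emptyset,S_1,S,\close{\lambda/s}{S}\}$ or a transversal set---but that proposition comes later in the paper and its proof is substantial.
\item Even granting Proposition~\ref{lemma:gg}, you still have to rule out $\close{\lambda/s}{S}$ and the transversal sets. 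The first can be done: by Lemma~\ref{le:FSL}\,(iii) one has $P(\close{r}{S})+\tfrac{1}{r}|\close{r}{S}\setminus S|\ge P(\mathrm{co}(S))\ge P(S)$ for all $r\ge R_c(S)$, which yields $\Fsl(\close{\lambda/s}{S})\ge\Fsl(S)$. Ruling out transversal minimizers under~\eqref{lastRR} is not addressed at all in your outline and is not obviously covered by the closing comparison.
\end{itemize}

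The argument in \cite{BeCaNo:00} avoids this level-set route entirely: it constructs an explicit calibration $\xi\in L^\infty(\R^2,\R^2)$ with $|\xi|\le 1$ satisfying the Euler--Lagrange equation~\eqref{euler} for $u_\lambda=v_\lambda$. The point is that each ball $S_i$ carries its own radial calibration, and the condition $P(S)\le P(\mathrm{co}(S))$ is exactly what allows these two fields to be glued in $\R^2\setminus S$ without violating $|\xi|\le 1$. If you want a self-contained proof, that construction is the efficient path; your level-set approach can be completed, but only by importing a good deal of the machinery developed in Sections~\ref{sect:solutionballs}--\ref{sect:construction}.
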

In the general case the minimizers of $\Fsl$ can be subsets of $S$ or contain parts outside $S$, as we shall see in the following section.

\section{Properties of minimizers}\label{sect:solutionballs}
As explained in Section \ref{sect:R} our purpose is to characterize the minimizers of $\Fsl$
when $S$ is the union of two balls $S_1,S_2$ with disjoint interiors and distance $d$.  In order to fix the notation we assume
$S_1,S_2$ are open balls of radii $r_1\ge r_2$.



Let us first state a simple geometric result which will be useful in the proof of Proposition \ref{lemma:gg} below.

\begin{lemma}\label{threeballs}
Let $B_1$, $B_2$, $B_3$ be three open balls of equal radius,
intersecting $\partial S_2$ at equal angles.
Let $\Gamma_{S_2}$ be the arc of $\partial S_2$ contained in $\mathrm{co}(S)$.
Assume the three balls intersect $\Gamma_{S_2}$ and $B_3$ is between $B_1$ and $B_2$ when we go along $\Gamma_{S_2}$
(see Figure \ref{fig:threeballs}).
If $S_1$ intersects $B_1$ and $B_2$, then it intersects also $B_3$. The same statement holds interchanging $S_1$ and $S_2$.
\end{lemma}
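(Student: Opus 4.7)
The plan is to reduce the claim to a one-variable quasi-convexity statement along a circle concentric with $S_2$. Since $B_1, B_2, B_3$ have a common radius $R$ and meet $\partial S_2$ at equal angles, the law of cosines applied in the triangle with vertices $o_2$ (the centre of $S_2$), $c_i$ (the centre of $B_i$) and any point of $\partial B_i \cap \partial S_2$ forces $|c_i - o_2|$ to equal a common value $\rho = \rho(r_2, R, \alpha)$. Thus $c_1, c_2, c_3$ all lie on the same circle $\Sigma$ of radius $\rho$ centred at $o_2$.

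Denoting by $o_1$ the centre of $S_1$ and setting $d := |o_1 - o_2|$, I would parameterize $\Sigma$ by the angle $\theta \in (-\pi, \pi]$ measured from the direction $o_2 \to o_1$. For a point $c(\theta) \in \Sigma$,
\[
|o_1 - c(\theta)|^2 = d^2 + \rho^2 - 2 d \rho \cos\theta,
\]
which is even in $\theta$ and strictly increasing in $|\theta|$ on $(-\pi, \pi)$. A short case distinction on signs then shows that for any $\theta_1, \theta_3, \theta_2 \in (-\pi,\pi)$ with $\theta_3$ lying between $\theta_1$ and $\theta_2$ one has $|\theta_3| \le \max(|\theta_1|, |\theta_2|)$, and consequently $|o_1 - c(\theta_3)| \le \max\{|o_1 - c(\theta_1)|,\, |o_1 - c(\theta_2)|\}$.

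Next I would match the hypothesis ``$B_3$ is between $B_1$ and $B_2$ along $\Gamma_{S_2}$'' with ``$\theta_3$ is between $\theta_1$ and $\theta_2$''. The radial projection $\Sigma \to \partial S_2$ sends each $c_i$ to the midpoint of $B_i \cap \partial S_2$, and $\Gamma_{S_2}$ is the arc of $\partial S_2$ on the $o_1$-side of $o_2$, so the angular order of the $c_i$'s on the $o_1$-facing arc of $\Sigma$ matches the order of the $B_i$'s along $\Gamma_{S_2}$. Since $S_1$ is a ball of radius $r_1$ and $B_i$ a ball of radius $R$, the condition $S_1 \cap B_i \neq \emptyset$ is equivalent to $|o_1 - c_i| < r_1 + R$, so the hypothesis on $B_1, B_2$ gives $|o_1 - c_1|, |o_1 - c_2| < r_1 + R$. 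The quasi-convex estimate at $\theta_3$ then yields $|o_1 - c_3| < r_1 + R$, i.e.\ $S_1 \cap B_3 \neq \emptyset$. The symmetric claim follows by swapping the roles of $S_1$ and $S_2$.

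The step I expect to require the most care is the geometric bookkeeping that identifies ``between along $\Gamma_{S_2}$'' with the angular ordering on $\Sigma$; in particular, I would need to verify that all three angles lie in the half of $\Sigma$ facing $o_1$ so that the quasi-convex comparison applies without wrap-around. Once that is in place, the remainder is an elementary law-of-cosines computation.
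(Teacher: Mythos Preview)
Your proposal is correct and follows essentially the same approach as the paper: both observe that the equal-radius, equal-angle hypothesis forces the centres $c_1,c_2,c_3$ onto a circle concentric with $S_2$, and then reduce ``$S_1\cap B_i\neq\emptyset$'' to the distance condition $|o_1-c_i|<r_1+R$. The only difference is in the final comparison---the paper argues that $c_3$ lies in the interior of the triangle with vertices $o_1,c_1,c_2$, whereas you use the explicit law-of-cosines expression and its monotonicity in $|\theta|$; these are equivalent, and your version makes the angular bookkeeping you flagged more transparent.
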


\begin{figure}
\begin{center}
\includegraphics{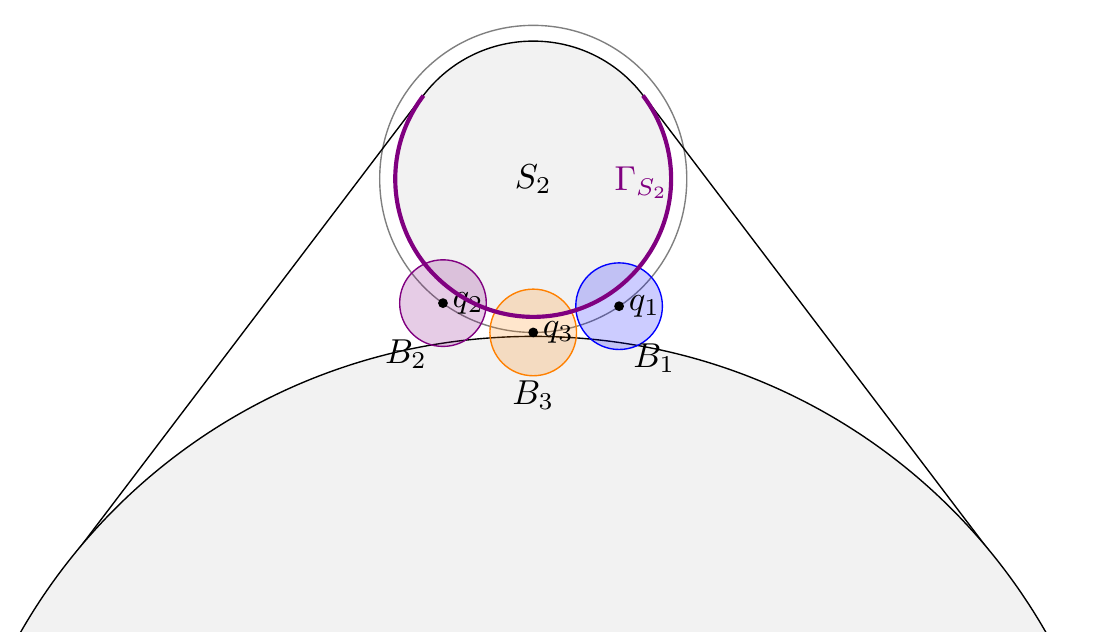}
\end{center}
\caption{The construction in the proof of Lemma \ref{threeballs}.}\label{fig:threeballs}
\end{figure}

\begin{proof}
Observe that the centers of $B_1$, $B_2$, and $B_3$, denoted respectively by $q_1,q_2,q_3$,
are contained in a circle concentric with $S_2$.
Let $p$ be the center of $S_1$ and $r$ be the  common radius of $B_i$,  $i=1,2,3$.
Consider the triangle formed by the segments $[p,q_1]$, $[p,q_2]$ and $[q_1,q_2]$. Notice that
since $S_1$ intersects $B_1$ and $B_2$, $|p-q_1| \le r_1 + r$ and $|p-q_2| \le r_1+ r$. Since $q_3$ is contained in
the interior of such triangle then $|p-q_3| < r_1+ r$,
and therefore $S_1$ intersects $B_3$.
\end{proof}

\begin{proposition}\label{lemma:gg}
Let $\Cs$ be a minimizer of $\Fsl$. 
Then the boundary $\partial \Cs$ is of class $C^{1,1}$,
$\Cs\subset \overline{{\rm co}(S)}$,
and one of the following possibilities holds:
\begin{enumerate}[a)]
\item \label{it:gg1}
$\Cs\in\{\emptyset,S_1,S,\close{\frac\lambda s}{S}\}$, 
and $\Cs\ne S_2$ if $r_1>r_2$;
\item \label{it:gg2}
$S_1\subset \Cs$, $\partial \Cs\cap S_2$ is a circular arc 
with curvature $\frac{1-s}{\lambda}$,
and $\partial \Cs \setminus \overline{S}$ is composed by two circular arcs
with curvature $-\frac{s}{\lambda}$.
\end{enumerate} 
\end{proposition}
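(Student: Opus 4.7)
The plan is to start from the Euler--Lagrange equation associated with $\mathcal{F}_{s,\lambda}$, which forces $\partial C_{s,\lambda}$ to be of class $C^{1,1}$ with constant curvature $(1-s)/\lambda$ on the portion inside $S$ and $-s/\lambda$ on the portion outside $S$. For the inclusion $C_{s,\lambda}\subset\overline{\mathrm{co}(S)}$, I would compare $C_{s,\lambda}$ with $C_{s,\lambda}\cap\overline{\mathrm{co}(S)}$: intersection with a convex set does not increase perimeter, and the discarded piece lies entirely outside $S$, where the $\mathcal{F}_{s,\lambda}$-cost is nonnegative; minimality then forces equality.

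Consequently $\partial C_{s,\lambda}$ is a disjoint union of smooth closed curves, each concatenating circular arcs of radius $r_{\mathrm{in}}:=\lambda/(1-s)$ inside $S_1\cup S_2$ and of radius $r_{\mathrm{out}}:=\lambda/s$ in $\overline{\mathrm{co}(S)}\setminus\overline{S}$, meeting tangentially along $\partial S$. If $\partial C_{s,\lambda}$ has no arc inside either $S_i$, then each $S_i$ is fully contained in or disjoint from $C_{s,\lambda}$; combined with the fact that the exterior portion of $\partial C_{s,\lambda}$ is made of $r_{\mathrm{out}}$-arcs confined to the convex lens $\overline{\mathrm{co}(S)}\setminus\overline{S}$, this narrows $C_{s,\lambda}$ to one of $\emptyset, S_1, S_2, S, \close{\lambda/s}{S}$. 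To rule out $C_{s,\lambda}=S_2$ when $r_1>r_2$, I would directly compare $\mathcal{F}_{s,\lambda}(S_2)$ with $\mathcal{F}_{s,\lambda}(S_1)$ and invoke the calibrability criterion of Theorem~\ref{evol_shapes}: in any range of $(s,\lambda)$ where $S_2$ is a minimizer candidate, the larger ball $S_1$ is strictly better.

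Otherwise $\partial C_{s,\lambda}\cap S_2\neq\emptyset$, and I would first establish $S_1\subset C_{s,\lambda}$ by a swap argument: any configuration that cuts $S_1$ nontrivially, or is disjoint from $S_1$, can be improved by the corresponding configuration that fully contains $S_1$ and cuts $S_2$ instead, using $r_1\ge r_2$ and the monotonicity of the perimeter/area trade-off in the radius.

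The remaining --- and main --- step is to show that $\partial C_{s,\lambda}\cap S_2$ consists of a single arc, and here Lemma~\ref{threeballs} is crucial. By the $C^{1,1}$ matching with the exterior $r_{\mathrm{out}}$-arcs, every interior $r_{\mathrm{in}}$-arc meets $\partial S_2$ at a common contact angle, so the disks of radius $r_{\mathrm{in}}$ supporting them are exactly in the configuration of $B_1,B_2,B_3$ from the lemma. Suppose there were two or more such arcs; picking their outermost two disks and any one in between, the exterior arcs force $S_1$ to meet both outer disks, since $S_1\subset C_{s,\lambda}$ and the exterior arcs connect $\partial S_2$ tangentially to $\partial S_1$. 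Lemma~\ref{threeballs} would then force $S_1$ to meet the middle disk as well, so $S_1$ would contain part of the corresponding interior arc, contradicting $S_1\subset C_{s,\lambda}$. Hence exactly one interior arc survives, and $\partial C_{s,\lambda}\setminus\overline{S}$ consists of the two exterior $r_{\mathrm{out}}$-arcs joining the endpoints of this arc tangentially to $\partial S_1$. The main obstacle is precisely this last reduction: setting up the three-ball configuration consistently (equal radii, common contact angle, correct ordering along $\Gamma_{S_2}$) and verifying that $S_1$ really does intersect every outermost pair of disks coming from multiple interior arcs.
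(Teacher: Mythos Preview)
Your overall plan follows the right skeleton --- Euler--Lagrange regularity, the convex-hull cut, then a case analysis on whether $\partial C_{s,\lambda}$ meets the interior of the $S_i$ --- but the heart of the reduction is not set up correctly, and several structural facts that the paper proves carefully are simply assumed.

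The main issue is your use of Lemma~\ref{threeballs}. You take the $B_i$ to be the $r_{\mathrm{in}}$-disks supporting the interior arcs in $S_2$ and claim that $S_1$ meets the two outermost ones. There is no reason for this: those disks live inside (or near) $S_2$, and an exterior $r_{\mathrm{out}}$-arc being tangent to $\partial S_1$ at its far end does not force $S_1$ to touch the small $r_{\mathrm{in}}$-disk at the near end. Worse, you \emph{assume} along the way that ``the exterior arcs connect $\partial S_2$ tangentially to $\partial S_1$''. Ruling out an exterior arc with both endpoints on the same $\partial S_i$ is exactly the nontrivial step, and it is where the paper invokes Lemma~\ref{threeballs} --- but with the $B_i$ equal to the $r_{\mathrm{out}}$-disks of the \emph{exterior} arcs, arguing that if such arcs wound around one ball they would be congruent by rotation about its centre and hence leave $\mathrm{co}(S)$. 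The single-interior-arc conclusion is then obtained by a separate curve-tracing argument along the Jordan curve $\Gamma$, not by a second application of the three-balls lemma.

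Your ``swap argument'' for $S_1\subset C_{s,\lambda}$ is also too vague to carry weight. The paper's mechanism is different and specific: first prove symmetry of $C_{s,\lambda}$ about the line $\ell$ through the two centres (via the standard trick with $A=C_{s,\lambda}\cap\tilde C_{s,\lambda}$, $B=C_{s,\lambda}\cup\tilde C_{s,\lambda}$ and the $C^{1,1}$ regularity of minimizers), and then, when $r_1>r_2$, reflect through the perpendicular bisector $\ell'$ and compare energies directly to force the transversal arc into the smaller ball. The paper also needs, and proves, that every arc of $\partial C_{s,\lambda}\setminus\partial S$ has angular span strictly less than $\pi$, and combines this with Gauss--Bonnet to obtain simple-connectedness and connectedness of $C_{s,\lambda}$. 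Without these intermediate facts the classification in case~a) does not follow from ``exterior arcs confined to the lens'' alone --- nothing in your outline yet excludes an asymmetric bridge, a component meeting only one $S_i$ that is not the whole ball, or a multiply-connected configuration.
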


\begin{proof}
The regularity of $\partial \Cs$ is a classical result \cite{Ambrosio}.
The Euler-Lagrange equations say that, if non-empty, $\partial \Cs \setminus \overline{S}$ are arcs of circle of curvature
$-\frac{s}{\lambda}$, and $\partial \Cs \cap  S$ are arcs of circle of curvature  $\frac{1-s}{\lambda}$.
In particular, $\partial \Cs$ has finitely many connected components which are $C^{1,1}$ Jordan curves,
and any two of them have positive distance. 

Notice that the energy is additive on the connected components, that is, if $\mathcal{CC}_{s,\lambda}$ denotes the set of connected components of
$\Cs$, then $\Fsl(\Cs) =\sum_{C\in\mathcal{CC}_{s,\lambda}} \Fsl(C)$. Moreover $\Fsl(C) \leq 0$ for any $C\in\mathcal{CC}_{s,\lambda}$, otherwise
we can eliminate this component decreasing the energy.
Let $C$ be a connected component of $\Cs$. Modulo null sets, if $C \cap S_i=\emptyset$, $i\in \{1,2\}$,
then $C\subseteq S_j$, $j\in\{1,2\}$, $j\neq i$. Otherwise, by replacing $C$ by $C\cap S_j$  we decrease the energy
of $\Cs$. Thus there are only three possibilities: $C\subseteq S_1$, $C\subseteq S_2$, or $C \cap S_1\neq \emptyset$
and $C \cap S_2\neq \emptyset$.


Without loss of generality, we can assume that $\mathrm{dist}(S_1,S_2)  > 0$.
Having proved the result in this case, by passing to the limit we get it also
when $\mathrm{dist}(S_1,S_2) = 0$. We divide the rest of the proof in several steps.
Without loss of generality we may assume that $\Cs$ is an open set.

\medskip\noindent {\it Step 1.} \textit{If $r_1>r_2$ then $\Cs\ne S_2$}. 

Assume by contradiction that $\Cs=S_2$, 
then $\Fsl(\Cs)= \Fsl(S_2)= 2\pi r_2 - \frac{(1-s)\pi}{\lambda} r_2^2\le 0$,
which implies $s<1$ and $r_2\ge \frac{2\lambda}{1-s}$.
This in turn implies that $\Fsl(S_1)<\Fsl(S_2)$, 
contradicting the minimality of $\Cs$.

\medskip\noindent {\it Step 2.} \textit{We have $\Cs\subset {\rm co}(S)$}.

Being ${\rm co}(S)$ convex, this follows from the fact that
$\Cs\cap {\rm co}(S)$ has lower energy than $\Cs$, with equality if and only if
$\Cs\subset {\rm co}(S)$.

\medskip\noindent {\it Step 3.}
\textit{Let $C$ be a connected component of $\Cs$ intersecting only one of the two circles, say $S_i$, then $C=S_i$}.

Replacing $C$ with $C\cap S_i$ decreases the energy, hence we may assume $C\subset S_i$.
On the other hand, $C$ does not have holes since by filling them we would also decrease the energy.
Since $\partial C$ is $C^{1,1}$, then $C$ is a ball of radius $r(s)=\frac{\lambda}{1-s}$. As we observed at
the beginning of the proof, it is at positive distance from the other connected components. Thus we may dilate it
to a ball $B_r$ of radius $r$ contained in $S_i$. Since
$\Fsl(B_r) = 2\pi r - \frac{1-s}{\lambda} \pi r^2$, for $r>r(s)$ near $r(s)$  we have
$\Fsl(B_r) < \Fsl(C)$ and this permits to decreases the energy of $\Cs$. Thus $C=S_i$.

\medskip\noindent {\it Step 4.} \textit{Let $\Gamma$ be a connected component of $\partial\Cs$, 
and assume that $\Gamma\setminus \overline S$ is nonempty. 
Then $\Gamma\setminus \overline S$ consists of arcs joining $S_1$ and $S_2$}.  

Assume by contradiction that $\Gamma$ contains an arc with both extrema on $\partial S_i$.
Without loss of generality we can assume $i=1$.
Then $\Gamma\setminus \overline S_2$
is the union of consecutive arcs which are alternatively in
$S_1$ and in $\R^2\setminus \overline{S}$. 
By Lemma \ref{threeballs}, all the arcs of $\Gamma\setminus \overline{S}$ except the two extremal ones are similar,
that is, they coincide after a rotation around the center of $S_1$ (see Figure 
\ref{fig:wandering2}).
In particular, at least one of these arcs intersects the complementary of 
${\rm co}(S)$, contradicting {\it Step 2.}

\medskip\noindent {\it Step 5.} \textit{Let $\Gamma$ be a connected component of $\partial\Cs$ that intersects both $S_1$ and $S_2$.
If $\Gamma \cap S_i \neq \emptyset$, for $i=1,2$,
then $\Gamma\cap  S_i$ is an arc of circle of radius
$\frac{\lambda}{1-s}$ and $\Gamma\setminus \overline{S}$ consists of two arcs of circle
of radius $\frac{\lambda}{s}$, connecting $S_1$ and $S_2$}.

Let $\ell$ be the line passing through the centers of $S_1$ and $S_2$.
Let us consider a coordinate system where the $y$-axis coincides with $\ell$, 
and $S_2$ is above $S_1$. Let $\Gamma_{S_2}$ be the arc of $\partial S_2$ contained in $\mathrm{co}(S)$. By going along
$\partial S_2$ in the counterclockwise direction we induce an order in $\Gamma_{S_2}$. 
Similarly, if $\Gamma_{S_1}$ denotes the arc of $\partial S_1$ contained in
$\mathrm{co}(S)$, we consider the order in
$\Gamma_{S_1}$  induced  by going  along $\partial S_1$  clockwise. 

Let us order $\Gamma$ counterclockwise.
Since $\Gamma\setminus \overline{S}\neq \emptyset$,
we may choose $G$ as the arc in $\Gamma\setminus S$ having greatest intersection point with $\Gamma_{S_2}$,
with respect to the order of $\Gamma_{S_2}$. The arc $G$ intersects $\Gamma_{S_1}$ at  point $q$, and $\Gamma_{S_2}$ at a point $p$.
Let $\gamma_{S_2}$ be the arc of $\Gamma\cap S_2$ starting at $p$ (see Figure \ref{fig:wandering2}
). Let us observe that if
$p_1$ is the other endpoint of $\gamma_{S_2}$, then $p_1 \in \Gamma_{S_2}$ and $p_1 < p$ with respect to the order of 
$\Gamma_{S_2}$. Thus $G$ continues after $\gamma_{S_2}$ with an arc $G_1\subset \Gamma\setminus S$ until
it intersects $S_1$ at a point $q_1$ (see Figure \ref{fig:wandering2}, left).
Then $G_1$ enters into $S_1$ at a point $q_1 < q$.
As we observed above, 
there is an arc $\gamma_{S_1}\subset \Gamma\cap S_1$ that starts at $q_1$ and exits
from $S_1$ at $q_2$.

Let $G_2$ be the arc in $\Gamma\setminus S$ that starts at $q_2$.
We claim that $G_2 = G$. Indeed, notice first that $q_2 \le q$ by the choice of $G$.
On the other hand,
if $q_2<q$ we could continue following $\Gamma$ along arcs of circles inside and outside $S$, until we would reach some point where 
these arcs intersect each other, giving a contradiction.
We thus conclude that $q = q_2$ and hence $G_2 = G$.

\begin{figure}
\begin{center}
\includegraphics[scale=0.8]{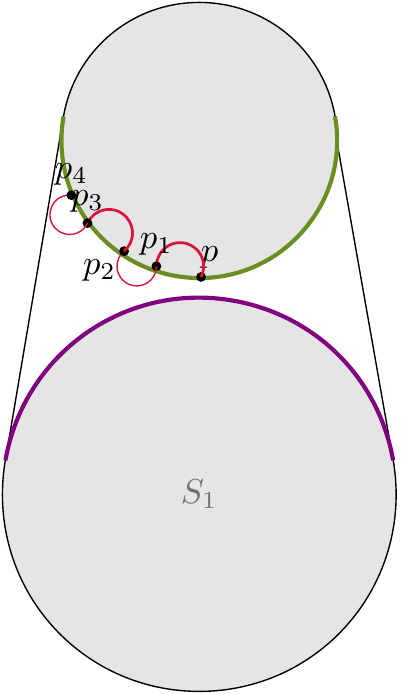}
\hspace{4cm}
\includegraphics{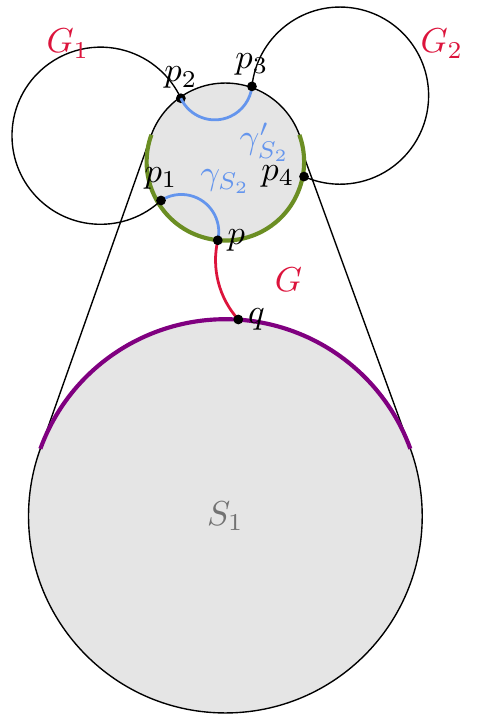}
\end{center}
\caption{
Left: The construction in {\it Step 4}.
Right: Illustration of $G,G_1$, $G_2$, $\gamma_{S_2}$, $\gamma_{S_2}'$, 
and the points $q,p,p_1,p_2$ in {\it Step 5}. 
}
\label{fig:wandering2}
\end{figure}

\medskip\noindent {\it Step 6.}
\textit{Let $\Gamma$ be a connected component of $\partial \Cs$.
Then $\Gamma\setminus \partial S$ is a union circular arcs with angular span strictly less than $\pi$.}

Let $K$ be a connected component of $\Gamma\setminus\partial S$. Then $K$ is a circular arc 
of radius $r=r(s,\lambda)$, where $r=\lambda/(1-s)$ if $K\subset S$, and $r=\lambda/s$ 
if $K\subset \R^2\setminus\overline S$ (if $s=0$ then $K$ is a segment).

Assume by contradiction that the angular span $\alpha$ of $K$ is greater or equal to $\pi$.
Then we can modify $\Cs$ and construct a new set with lower energy.
Indeed, for $\epsilon > 0$ small enough, we consider a ball $B_\epsilon$ of radius $r_\epsilon = (1+\epsilon)r$,
containing the endpoints of $K$.
Let $K_\epsilon\subset\partial B_\epsilon$ be the circular arc with the same endpoint as $K$,
and let $C_\epsilon$ be the such that $\partial C_\epsilon = (\partial \Cs \setminus K)\cup K_\epsilon$.
It is easy to check that $\Fsl(C_\epsilon) < \Fsl(\Cs)$, contradicting the minimality of $\Cs$.

\medskip\noindent {\it Step 7.} \textit{Let $C$ be a connected component of $\Cs$,
then $C$ is simply connected.} 

If $C$ intersects only $S_i$ then $C=S_i$ be {\it Step 3}, hence we can assume that $C$
intersects both $S_1$ and $S_2$. If $C$ is not simply connected then $\partial C$
contains a closed Jordan curve $\Gamma$ which bounds a bounded connected component of $\R^2\setminus C$.
By the previous discussion we can write $\Gamma=\cup_{i=1}^4\Gamma_i$, where 
$\Gamma_i$ are circular arcs, $\Gamma_1,\Gamma_2$
have curvature $-(1-s)/\lambda$ and are contained in $S_1,S_2$ respectively, and
$\Gamma_3,\Gamma_4$ have curvature $s/\lambda$ and are contained in $\R^2\setminus S$.

Since the curvature $\kappa$ of $\Gamma$ is negative on $\Gamma_1\cup\Gamma_2$
and positive on $\Gamma_3\cup\Gamma_4$,  we have
$$
\int_{\Gamma_3\cup\Gamma_4} \kappa\, d\mathcal{H}^1 = 2\pi
- \int_{\Gamma_1\cup\Gamma_2} \kappa\, d\mathcal{H}^1 \geq 2\pi.
$$
On the other hand,
$$
\int_{\Gamma_3\cup\Gamma_4} \kappa\, d\mathcal{H}^1 < 2\pi
$$
since by {\it Step 6} we know that
$\Gamma_i$ have all angular span strictly less than $\pi$.

\medskip\noindent {\it Step 8.} 
\textit{Let $C$ be a connected component of $\Cs$ intersecting both $S_1$ and $S_2$, 
then $C$ contains $S_1$ or $S_2$. In particular, the set $\Cs$ is connected.}

If $C$ contains neither $S_1$ nor $S_2$, 
we can write $\partial C=\cup_{i=1}^4\Gamma_i$, where 
$\Gamma_i$ are circular arcs, $\Gamma_1,\Gamma_2$
have curvature $(1-s)/\lambda$ and are contained in $S_1,S_2$ respectively, and
$\Gamma_3,\Gamma_4$ have curvature $-s/\lambda$ and are contained in $\R^2\setminus S$.
Reasoning as in {\it Step 7} we then reach a contradiction. 

Assume now that $\Cs$ is not connected and let $\tilde C$ be a connected component different from $C$. 
By the previous discussion, $\tilde C$ contains either $S_1$ or $S_2$, hence it intersects $C$,
thus giving a contradiction.

\medskip\noindent {\it Step 9.} \textit{$\Cs$ is symmetric with respect to $\ell$. Moreover, if $r_1=r_2$, then $\Cs$ is also symmetric with respect to the 
line $\ell'$ which is orthogonal to $\ell$ and has the same distance from $S_1$ and $S_2$.}

Let $\tCs$ be the set obtained by reflecting $\Cs$ through $\ell$,
which is still a minimizer of $\Fsl$.
Letting $A=\Cs\cap \tCs$, $B=\Cs\cup\tCs$,
we have
$$
\Fsl(A)+\Fsl(B)=\Fsl(\Cs)+\Fsl(\tCs),
$$
which implies that both $A$ and $B$ are minimizers of $\Fsl$. 
In particular, $A$ and $B$ have boundaries of class $C^{1,1}$,
and this is possible only if $\Cs=\tCs$.

The second assertion can be proved analogously by replacing $\ell$
with $\ell'$ in the reflection argument.

\medskip\noindent {\it Step 10.} \textit{If $\Cs$ is nonempty and different from
$S_2$ then it contains $S_1$. If $r_1=r_2$ then $\Cs$ contains $S$.}

Assume by contradiction that $\Cs$ does not contain $S_1$. Then
from the previous steps it follows that $\Cs$ contains $S_2$ 
and intersects $S_1$ in a circular arc. If $r_1=r_2$ this violates the symmetry 
of $\Cs$ with respect to $\ell'$ and gives a contradiction.

Let us consider the case $r_1>r_2$, and
let $\tCs$ 
(resp. $\tilde S_1$)
be the sets obtained by reflecting $\Cs$ 
(resp. $S_1$) through $\ell'$. 
Let also 
\[
A = \Cs\cap\left(\tilde S_1\setminus S_2\right) 
\qquad 
B = \tCs\cap\left(\tilde S_1\setminus S_2\right) 
\]

\begin{figure} \begin{center}
\includegraphics[]{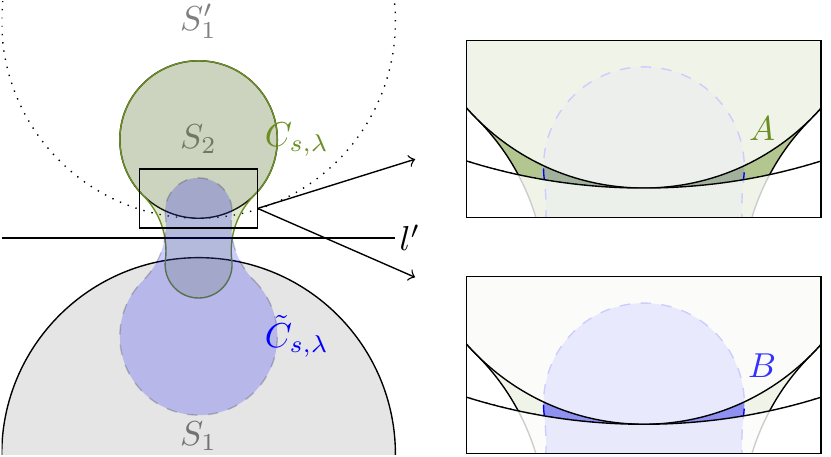}
 \end{center}
\caption{Illustration of {\it Step 10}.}
\end{figure}

It is easy to check that $B\subset A$ and 
\[
\Fsl(\Cs)-\Fsl(\tCs) = \frac 1\lambda\left(|A|-|B|\right)> 0\,, 
\]
contradicting the minimality of $\Cs$.

\medskip\noindent {\it Step 11.} From the previous discussion it follows that
either $\Cs \in\{\emptyset, S_1,S\}$, or $\Cs$ is simply connected, contains $S_1$
and intersects $S_2$.

\end{proof}

\begin{corollary}[\cite{All09}]\label{lemma:ffequalradius}
Assume that $S_1$ and $S_2$ are two open disjoint balls with equal radius.
Let $\Cs$ be a minimizer of $\Fsl$, $s \in [0,1]$.
Then if the set $\Cs$ is non-empty, the boundary $\partial \Cs$ is of class $C^{1,1}$.
Moreover, $\Cs\cap S_i = S_i$ or $\emptyset$ for any $\lambda >0$, $s\in [0,1]$ and $i=1,2$.
In particular, when they exist, the arcs of $\partial \Cs \setminus \overline{S}$ have radius
$\frac{\lambda}{s}$ and are tangent to $\partial S$.
\end{corollary}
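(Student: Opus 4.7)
My plan is to deduce this corollary as a specialization of Proposition \ref{lemma:gg} to the symmetric case $r_1=r_2$, exploiting the extra symmetry to rule out the more exotic possibilities.

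First, the $C^{1,1}$ regularity of $\partial \Cs$ is immediate from Proposition \ref{lemma:gg}, so only the last two assertions require work. Proposition \ref{lemma:gg} gives two possible shapes: case (a), where $\Cs\in\{\emptyset,S_1,S,\close{\lambda/s}{S}\}$, and case (b), where $S_1\subset \Cs$ but $\partial \Cs\cap S_2$ is a proper circular arc of curvature $(1-s)/\lambda$. I would first use the symmetry statement of Step 9 in the proof of Proposition \ref{lemma:gg}: since $r_1=r_2$, $\Cs$ is symmetric with respect to the perpendicular bisector $\ell'$ of the segment joining the centers of $S_1$ and $S_2$. Neither $S_1$ nor $S_2$ alone is $\ell'$-symmetric, so case (a) reduces to $\Cs\in\{\emptyset,S,\close{\lambda/s}{S}\}$.

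Next, I would rule out case (b) entirely in the equal-radii situation. Step 10 of the proof of Proposition \ref{lemma:gg} states that, when $r_1=r_2$, any nonempty minimizer contains the whole set $S$. But in case (b), the intersection $\partial \Cs\cap S_2$ is a proper arc, so $S_2\not\subset \Cs$, a contradiction. Hence case (b) cannot occur.

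Finally, it remains to check that each of the three admissible minimizers $\Cs\in\{\emptyset,S,\close{\lambda/s}{S}\}$ satisfies the two stated properties. In all three cases either $S\subset \Cs$ or $\Cs=\emptyset$, so $\Cs\cap S_i\in\{S_i,\emptyset\}$ trivially. For the claim about the arcs of $\partial \Cs\setminus\overline S$, the assertion is vacuous when $\Cs\in\{\emptyset,S\}$; when $\Cs=\close{\lambda/s}{S}$ the arcs have radius $\lambda/s$ directly by the definition of the closing operator, and they meet $\partial S$ tangentially because $\partial \Cs$ is $C^{1,1}$. I do not anticipate a serious obstacle: the only substantive point is checking that case (b) is incompatible with the symmetry forced by Step 10, and this is essentially a bookkeeping argument on the list of possibilities supplied by Proposition \ref{lemma:gg}.
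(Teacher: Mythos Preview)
Your proposal is correct and follows exactly the route the paper intends: the corollary is stated without proof, as an immediate specialization of Proposition~\ref{lemma:gg} (specifically Steps~9 and~10) to the case $r_1=r_2$, and you have simply spelled out that derivation.

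One small remark: you do not actually need to eliminate $\Cs=S_1$ from the list in case~(a). If $\Cs=S_1$ then $\Cs\cap S_1=S_1$ and $\Cs\cap S_2=\emptyset$, so the conclusion of the corollary already holds. This is worth noting because the $\ell'$-symmetry argument of Step~9, as written, does not straightforwardly exclude $\Cs=S_1$: reflecting $S_1$ gives $S_2$, and both $S_1\cap S_2=\emptyset$ and $S_1\cup S_2=S$ have $C^{1,1}$ boundary, so the ``corner'' contradiction does not fire. (One can still rule it out energetically: if $\Fsl(S_1)<0$ then $\Fsl(S)=2\Fsl(S_1)<\Fsl(S_1)$, so $S_1$ can only be minimal when $\Fsl(S_1)=0$, a degenerate case.) Your use of Step~10 to exclude case~(b) is clean and is the essential content of the corollary.
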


\noindent\textbf{Example.} We give an example of 
a situation where case 
\ref{it:gg2} of Proposition \ref{lemma:gg} is realized.
For that, we consider two disjoint balls $S_1$ and $S_2$ and assume that they are tangent. Assume also that
$r_2 < 1 < r_1$ and take $\lambda = 1$. Then for an appropriate choice of $r_1,r_2$, the function
has level sets that are transversal to $S_2$, that is, they intersect $S_2$ but do not contain it. 
Let $C_\lambda = \{u_\lambda > 0\}$. Since $C_\lambda$ is a minimizer
of the functional $\mathcal{F}_{0,1}(X)=P(X) - \vert X\cap S \vert$, 
it follows that the maximum of the curvature
of $\partial C_\lambda$ is less than $1$. However, 
if $C_\lambda \supseteq S_2$, then the maximum of the curvature 
is $\frac{1}{r_2}>1$, is less than $1$, contradicting our choice of
$r_2$.
If we prove that $C_\lambda \neq S_1$, then $C_\lambda$ is of the type described in Proposition \ref{lemma:gg} \ref{it:gg2}. 
For that,  it suffices to show that $\mathcal{F}_{0,1}(\mathrm{co}(S)) -
\mathcal{F}_{0,1}(S_1) < 0$. Indeed, for $r_2 \ll r_1$, this difference is bounded by
$$
\eta = C \frac{r_2^{3/2}}{r_1^{1/2}} - \pi r_2^2,
$$
for some constant $C > 0$ independent of $r_1,r_2$. 
If we choose $r_1 = \frac{M}{r_2}$ and $M > \frac{C^2}{\pi^2}$, then $\eta = \left(\frac{C}{M^{1/2}}-\pi\right) r_2^2 < 0$.

\begin{definition}\label{deftrans}
\changed{ We call  {\rm transversal sets} the sets satisfying 
condition \ref{it:gg2} in Proposition \ref{lemma:gg}, and we denote them by $T_{s,\lambda}$.
If for a given couple of $(s,\lambda)$ we have two transversal sets, denoted by
$T^+_{s,\lambda}, T^-_{s,\lambda}$, with  
$T^+_{s,\lambda}\subset T^-{s,\lambda}$, then we say, that
 $T^+_{s,\lambda}$ is of increasing type and $T^-_{s,\lambda}$ is of decreasing type
 (see Figure \ref{transversal}).
 }
\end{definition}
\changed{
Assume that for both combinations 
$
(s_1,\lambda_1), 
(s_2,\lambda_2)$ with according radii 
$t_1=\frac{\lambda_1}{1-s_1}< 
  t_2=\frac{\lambda_2}{1-s_2}$ we have two transversal sets 
that we denote by 
$\set{T_{s_1,\lambda_1}^-, T_{s_1,\lambda_1}^+ }, 
  \set{T_{s_2,\lambda_2}^-, T_{s_2,\lambda_2}^+}$ respectively. 
Moreover, by definition of increasing and decreasing transversal sets, we have
$T_i^- \subset T_i^+, i=1,2$. 
Then $T_1^- \subset T_2^-$ and 
$T_2^+ \subset T_1^+$, 
meaning that if we increase the radius from $t_1$ to $t_2$ the sets of increasing type increase and the sets of decreasing type decrease.
}

\begin{figure}
\begin{tikzpicture}[fill=black!5!,inner sep=1pt, 
place/.style={circle,draw=black,draw opacity=0,fill=black}]
\filldraw (0,0) circle (1);
\filldraw (1.6,0) circle (0.5);
\def\alpha{71}
\def\r{0.53}
\draw[blue]  (\alpha:1) -- +(\alpha-90 :1.39) node[] (x1) {} arc(\alpha:-\alpha:\r) -- (-\alpha:1);
\def\beta{69}
\draw[red]  (\beta:1) -- +(\beta-90 :1.25)   arc(\beta:-\beta:\r)node[]  (x2) {} -- (-\beta:1);
\draw[blue,<-] (x1) -- +(1,0.5) node[right] {outer transversal set - decreasing type};
\draw[red,<-] (x2) -- +(1.1,-0.2) node[right] {inner transversal set - increasing type};
\end{tikzpicture}
 \begin{tikzpicture}[fill=black!5!,inner sep=1pt, 
place/.style={circle,draw=black,draw opacity=0,fill=black}]
\filldraw (0,0) circle (1);
\filldraw (1.6,0) circle (0.5);
\def\alpha{60}
\def\r{0.53}
\def\rr{0.4}
\draw[red!90!]  (\alpha:1) -- +(\alpha-90 :1.0) node[] (x1) {} arc(\alpha:\alpha+360:\rr) arc(\alpha:-\alpha:\rr) -- (-\alpha:1);
\def\beta{69}
\draw[red!50!]  (\beta:1) -- +(\beta-90 :1.25)   arc(\beta:360+\beta:\r) arc(\beta:-\beta:\r)node[]  (x2) {} -- (-\beta:1);
\draw[blue,<-] (x1) -- +(1,0.5) node[right] {increasing type};
\end{tikzpicture}
\caption{Left: for some values of $(s,\lambda)$, there are two transversal sets.
We call the inner one of increasing type, because for increasing radius (connected to $s,\lambda$, these sets decrease, whereas the decreasing sets decrease. Right:
For transversal sets of increasing type, by increasing the radius of the inner arc, the transversal set increases.}
\label{transversal}
\end{figure}
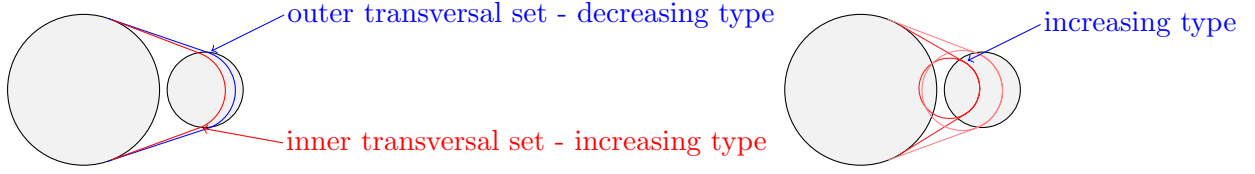

The following Lemma is needed later to state that transversal sets of increasing type cannot be minimizers of $\Fsl$. 

\begin{lemma}\label{lem:jose2} 
If $\lambda<\mu$, then 
$C_{s,\mu}\cap S  \subset C_{s,\lambda}\cap S $,
where $C_{s,\mu},C_{s,\lambda}$ are 
minimizers of $\Fcal_{s,\mu}, 
\Fcal_{s,\lambda}$ respectively.
\end{lemma}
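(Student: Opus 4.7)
My approach is to use a cross-minimality surgery. Write $A:=C_{s,\lambda}$, $B:=C_{s,\mu}$, and let $D:=(B\setminus A)\cap S$; the goal is to prove $|D|=0$. Consider
\[
X:=A\cup D=A\cup(B\cap S),\qquad Y:=B\setminus D=(A\cap B)\cup(B\setminus S),
\]
which transfer $D$ from $B$ to $A$ and leave the complement of $S$ untouched. Writing $\Phi_s(E):=s|E\setminus S|-(1-s)|E\cap S|$, and using $D\subset S\cap B$ together with $D\cap A=\emptyset$, a direct expansion gives the identity
\[
\tfrac1\lambda\Phi_s(X)+\tfrac1\mu\Phi_s(Y)-\tfrac1\lambda\Phi_s(A)-\tfrac1\mu\Phi_s(B)=-(1-s)\Bigl(\tfrac1\lambda-\tfrac1\mu\Bigr)|D|,
\]
which is strictly negative whenever $s<1$, $\lambda<\mu$ and $|D|>0$.

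Testing the minimality of $A$ in $\Fsl$ against $X$ and of $B$ in $\Fcal_{s,\mu}$ against $Y$ produces the matching perimeter estimates
\[
P(X)-P(A)\;\ge\;\frac{1-s}{\lambda}|D|,\qquad P(B)-P(Y)\;\le\;\frac{1-s}{\mu}|D|,
\]
which sum to
\[
P(X)+P(Y)\;\ge\;P(A)+P(B)+(1-s)\Bigl(\tfrac1\lambda-\tfrac1\mu\Bigr)|D|.
\]
Hence, once the reverse inequality $P(X)+P(Y)\le P(A)+P(B)$ is established, the two bounds together force $(1-s)(\tfrac1\lambda-\tfrac1\mu)|D|\le 0$; combined with $s<1$ and $\lambda<\mu$, this gives $|D|=0$ and proves the lemma. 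The endpoint cases $s\in\{0,1\}$ are trivial since they collapse $C_{s,\lambda}$ to $\R^2$ or $\emptyset$.

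The main obstacle is thus the perimeter rearrangement inequality $P(X)+P(Y)\le P(A)+P(B)$. It fails for generic sets, since submodularity only yields $P(X)+P(Y)\ge P(A\cup B)+P(A\cap B)$, which goes in the wrong direction. The proof must therefore exploit the very restrictive shape of minimizers guaranteed by Proposition~\ref{lemma:gg}: each of $A,B$ either belongs to $\{\emptyset,S_1,S,\close{\raussen}{S}\}$ (plus $S_2$ if $r_1=r_2$) or is a transversal set with $S_1\subset C$, an inner circular arc of curvature $(1-s)/\lambda$ meeting $\partial S_2$, and two outer arcs of curvature $-s/\lambda$ outside $S$. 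I would finish by a case check on the finitely many shape pairs for $(A,B)$, where the perimeter bound reduces to elementary monotonicity of circular-arc length in the radii $\lambda/(1-s)$ and $\lambda/s$ together with the forced $\mathcal{C}^{1,1}$ tangency at $\partial S_1\cup\partial S_2$. A more conceptual alternative, when such case analysis becomes unwieldy, is to lift the problem to the function level via Proposition~\ref{cc}, interpret $A,B$ as level sets of the (unique) ROF solutions $u_\lambda,u_\mu$, and derive the perimeter bound from the BV lattice inequality applied to the max--min rearrangement of $u_\lambda$ and $u_\mu$ on $S$, using the ROF maximum principle $0\le u_\nu\le 1$ to absorb the potentially harmful jump across $\partial S$ into the weighted fidelity gain inside $S$.
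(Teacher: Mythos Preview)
Your surgery is tidy on the fidelity side, but the argument does not close: the inequality $P(X)+P(Y)\le P(A)+P(B)$ is the entire difficulty, and you only sketch two possible routes without carrying either out. The case analysis via Proposition~\ref{lemma:gg} would be lengthy and would tie a general lemma to the two-ball geometry; the ``function level'' alternative is not developed enough to be checked. As it stands, this is a reduction to an unproven claim, not a proof.

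The paper avoids your obstacle by choosing different competitors. Instead of the hybrids $X=A\cup D$ and $Y=B\setminus D$, it tests $A=C_{s,\lambda}$ against $A\cup B$ in $\Fcal_{s,\lambda}$ and $B=C_{s,\mu}$ against $A\cap B$ in $\Fcal_{s,\mu}$. For this pair the perimeter bound is exactly the submodularity inequality
\[
P(A\cup B)+P(A\cap B)\le P(A)+P(B),
\]
which holds for arbitrary sets of finite perimeter and requires no structural input from Proposition~\ref{lemma:gg}. The paper then chains the submodularity inequality with the two optimality inequalities; since $\lambda<\mu$, the only way the resulting two-sided estimate can hold is with equality throughout, which forces $P(C_{s,\mu})=P(C_{s,\lambda}\cap C_{s,\mu})$ and the vanishing of the measure defect. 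Your choice of $X,Y$ was engineered to isolate $|D|$ in the fidelity term, but the price---losing submodularity---is too high; the paper accepts a messier fidelity computation in exchange for a free perimeter inequality. Note also that since $X\cup Y=A\cup B$ and $X\cap Y=A\cap B$, submodularity applied to $(X,Y)$ gives the bound in the wrong direction, exactly as you observed, so there is no shortcut with your competitors.

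One minor correction: the claim that $s=0$ collapses $C_{0,\lambda}$ to $\R^2$ is wrong. For $s=0$ the functional is $P(X)-\tfrac{1}{\lambda}|X\cap S|$, whose minimizers are bounded (indeed contained in $\overline{\mathrm{co}(S)}$ by Proposition~\ref{lemma:gg}); the case $s=0$ is not degenerate and must be handled by the main argument.
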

\begin{proof}
We know from \cite{AmbFusPal00}, Proposition 3.3.8, that 
\begin{align*}
P(C_{s,\lambda} \cup C_{s,\mu})+P(C_{s,\lambda} \cap C_{s,\mu})\leq 
P(C_{s,\lambda})+P(C_{s,\mu})
\end{align*}
implying
\begin{equation}\label{eq:inequalityAs}
\begin{aligned}
\lambda \lr{P(C_{s,\lambda} \cup C_{s,\mu})-P(C_{s,\lambda})}
&\leq \lambda \lr{P(C_{s,\mu})-P(C_{s,\lambda} \cap C_{s,\mu}}\\
&\leq \mu \lr{P(C_{s,\mu})-P(C_{s,\lambda} \cap C_{s,\mu})}\;.
\end{aligned}
\end{equation}
The last inequality is strict iff $\abs{P(C_{s,\mu})-P(C_{s,\lambda} \cap 
C_{s,\mu})}>0$.

Because of the optimality of $C_{s,\lambda}$ and $C_{s,\mu}$, 
we have 
$\Fcal_{s,\lambda}(C_{s,\lambda}) 
\leq \Fcal_{s,\lambda}(C_{s,\lambda} \cup C_{s,\mu})$, 
$\Fcal_{s,\mu}(C_{s,\mu}) \leq \Fcal_{s,\mu}(C_{s,\lambda} \cap C_{s,\mu})$ 
implying
\begin{align*}
\lambda P\lr{C_{s,\lambda}} 
&+s \abs{C_{s,\lambda} \setminus S}
-(1-s) \abs{C_{s,\lambda} \cap S}\\
&\leq 
\lambda P\lr{C_{s,\lambda} \cup C_{s,\mu}} 
+s \abs{(C_{s,\lambda} \cup C_{s,\mu}) \setminus S}
-(1-s) \abs{(C_{s,\lambda} \cup C_{s,\mu}) \cap S}\\
\mu P\lr{C_{s,\mu}} 
&+s \abs{C_{s,\mu} \setminus S}
-(1-s) \abs{C_{s,\mu} \cap S}\\
&\leq 
\mu P\lr{C_{s,\lambda} \cap C_{s,\mu}} 
+s \abs{(C_{s,\lambda} \cap C_{s,\mu}) \setminus S}
-(1-s) \abs{(C_{s,\lambda} \cap C_{s,\mu}) \cap S}
\end{align*}
such that
\begin{equation}\label{eq:inequalityBs}
\begin{aligned}
\mu &\lr{P(C_{s,\mu})-P(C_{s,\lambda} \cap C_{s,\mu})} 
\\
&\leq \lr{\abs{ C_{s,\mu} \cap S}
-\abs{(C_{s,\lambda} \cap C_{s,\mu}) \cap S}}\\
&+ s \lr{\abs{(C_{s,\lambda} \cap C_{s,\mu}) \setminus S} - \abs{C_{s,\mu} \setminus S}
-\abs{ C_{s,\mu} \cap S}
+\abs{(C_{s,\lambda} \cap C_{s,\mu}) \cap S}
}
\\
&\leq  \lambda \lr{P(C_{s,\lambda} \cup C_{s,\mu}) - P(C_{s,\lambda})}\;.
\end{aligned}
\end{equation}
\eqref{eq:inequalityAs} and \eqref{eq:inequalityBs}
can only hold true if
$\lr{P(C_{s,\mu})-P(C_{s,\lambda} \cap C_{s,\mu})}$
which implies  
\begin{align}
\lr{
 \abs{ C_{s,\mu} \cap S}
-\abs{(C_{s,\lambda} \cap C_{s,\mu}) \cap S}
}
+ 
s 
\lr{
\abs{(C_{s,\lambda} \cap C_{s,\mu}) } - 
\abs{C_{s,\mu} }
}
= 0.
\end{align} 
This in turn implies that

$$\abs{ C_{s,\mu} \cap S}=\abs{(C_{s,\lambda} \cap C_{s,\mu}) \cap S}
\qquad
\abs{(C_{s,\lambda} \cap C_{s,\mu}) } = \abs{C_{s,\mu} }.
$$
With this we conclude the Lemma.
\end{proof}

\begin{lemma}
\label{le:transversalSets}
\begin{enumerate}
\item \label{it:l2} 
For $\lambda, s$ such that $\frac{\lambda}{1-s}\leq r_2$,
there is at most one transversal set.
\item \label{it:l3} 
Assume $s,\mu\leq \lambda$ with $\frac{\lambda}{1-s}\leq
r_2$ are such that there exist $T_{s,\lambda},T_{s,\mu}$ two transversal sets
with $T_{s,\mu} \cap S_2\not = \emptyset$ and $T_{s,\lambda}\cap S_2 \not =
\emptyset$. Then $T_{s,\mu} \subset T_{s,\lambda}$.
\item \label{it:l4} 
If $\lambda < r_2(1-s)$, then $T_{s,\lambda}$ cannot be a minimzer of
$\Fcal_{s,\lambda}$.  
\item \label{it:l1}
The sets $\GammaS$ can be transversal sets of decreasing type, equal to $S_1$ or $\close{\lambda/s}{S}$.
\end{enumerate}
\end{lemma}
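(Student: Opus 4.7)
The plan is to exploit the one-parameter description of a transversal set that follows from Proposition \ref{lemma:gg}(\ref{it:gg2}) and the symmetry about $\ell$ proved in Step 9 of its proof. Once $s$ and $\lambda$ are fixed, $T_{s,\lambda}$ is determined by a single scalar parameter $\varphi\in(0,\pi/2]$---for instance the polar angle at the center of $S_2$ of the junction point $p\in\partial S_2$ where the inner arc (of radius $r_i=\lambda/(1-s)$) meets $\partial S_2$. The $C^{1,1}$ compatibility of the outer arc (of radius $r_o=\lambda/s$) with $\partial S_1$ at its other endpoint yields one scalar equation $F(\varphi;s,\lambda)=0$, and Items \ref{it:l2} and \ref{it:l3} both follow from a qualitative study of this equation.

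For Part \ref{it:l2}, when $r_i\le r_2$ the inner arc is at least as curved as $\partial S_2$ and must lie on the $S_1$-side of its chord on $\partial S_2$; this geometric constraint kills the second (decreasing-type) branch visible in Figure \ref{transversal} and makes $F(\,\cdot\,;s,\lambda)$ strictly monotone in $\varphi$, giving uniqueness. For Part \ref{it:l3}, the implicit function theorem applied to $F(\varphi(\lambda);s,\lambda)=0$, together with the fact that both $r_i$ and $r_o$ are increasing in $\lambda$, shows that all the relevant arcs move outward as $\lambda$ grows, which upgrades to the strict set-inclusion $T_{s,\mu}\subsetneq T_{s,\lambda}$ for $\mu<\lambda\le r_2(1-s)$.

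Part \ref{it:l4} is the core point and I would argue by contradiction using Lemma \ref{lem:jose2}. Assume $C_{s,\lambda}=T_{s,\lambda}$ for some $\lambda<r_2(1-s)$ and pick $\mu\in(\lambda,r_2(1-s))$. Lemma \ref{lem:jose2} yields $C_{s,\mu}\cap S\subset T_{s,\lambda}\cap S\subsetneq S$, which together with Proposition \ref{lemma:gg} excludes $C_{s,\mu}\in\{S,S_2,\close{\mu/s}{S}\}$. If $C_{s,\mu}$ is transversal, then by Part \ref{it:l2} it must equal the unique $T_{s,\mu}$, which by Part \ref{it:l3} strictly contains $T_{s,\lambda}$ on $S_2$, contradicting the Lemma \ref{lem:jose2} inclusion. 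Hence $C_{s,\mu}\in\{\emptyset,S_1\}$ for every $\mu$ slightly above $\lambda$; letting $\mu\downarrow\lambda$ and using the continuity of $\mu\mapsto\min\mathcal{F}_{s,\mu}$ forces $\mathcal{F}_{s,\lambda}(T_{s,\lambda})=\min\{0,\mathcal{F}_{s,\lambda}(S_1)\}$, and a direct energy comparison---exploiting that $r_i<r_2$ so that replacing the slice $T_{s,\lambda}\cap S_2$ by the whole $S_2$ strictly decreases the energy relative to the putative minimum---produces the desired contradiction. Finally, Part \ref{it:l1} is then a corollary: the list of candidate minimizers in Proposition \ref{lemma:gg} is $\{\emptyset,S_1,S_2,S,\close{\lambda/s}{S}\}$ together with transversal sets; by construction (Section \ref{sect:construction}) $\Gamma_{s,\lambda}(S)\notin\{\emptyset,S,S_2\}$, Part \ref{it:l4} rules out transversal sets when $r_i\le r_2$, and Part \ref{it:l3} combined with Lemma \ref{lem:jose2} selects the decreasing-type branch when $r_i>r_2$.

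The main obstacle will be the final energy comparison in Part \ref{it:l4}: the failure of $T_{s,\lambda}$ to be a minimizer is not a local (Euler-Lagrange) statement but a global stability one, so either an explicit estimate on the slice $T_{s,\lambda}\cap S_2$ using the calibration ratio $P(S_2)/|S_2|=2/r_2$ of the ball, or a careful limiting argument with Lemma \ref{lem:jose2} as $\mu\to r_2(1-s)^-$, will be unavoidable.
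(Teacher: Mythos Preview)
Your overall strategy matches the paper's: Items \ref{it:l2} and \ref{it:l3} via a one-parameter description of transversal sets, Item \ref{it:l4} via the monotonicity Lemma \ref{lem:jose2}, and Item \ref{it:l1} as a corollary. For Items \ref{it:l2}--\ref{it:l3} the paper is more concrete: it parametrizes by the center of the outer arc and shows that the two loci $\Gamma_l$ (centers at distance $r_1+r_o$ from the center of $S_1$) and $\Gamma_r$ (centers obtained by extending the inner-arc tangent by $r_i+r_o$) are graphs of strictly increasing concave functions, hence meet at most once. Your abstract ``scalar equation $F(\varphi)=0$, monotone in $\varphi$'' is the same idea, but you should check that monotonicity actually follows from $r_i\le r_2$; this is where the paper's concavity computation does real work.

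The genuine gap is in your closing of Item \ref{it:l4}. Your steps up to ``$C_{s,\mu}\in\{\emptyset,S_1\}$ for all $\mu\in(\lambda,r_2(1-s))$, hence by continuity $\mathcal F_{s,\lambda}(T_{s,\lambda})=\min\{0,\mathcal F_{s,\lambda}(S_1)\}$'' are correct, and in fact more detailed than the paper (which compresses the whole item into one sentence). But the final move---``replacing the slice $T_{s,\lambda}\cap S_2$ by all of $S_2$ strictly decreases the energy''---does not follow: the hypothesis $r_i<r_2$ gives $\frac{1-s}{\lambda}>\frac{1}{r_2}$, not $\frac{1-s}{\lambda}\ge\frac{2}{r_2}=\frac{P(S_2)}{|S_2|}$, so the lune $S_2\setminus T_{s,\lambda}$ need not carry negative energy, and the perimeter balance (inner arc removed, outer cap of $\partial S_2$ added) can go either way. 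What you have actually shown is only that $T_{s,\lambda}$ \emph{ties} with $\emptyset$ or $S_1$, which is not yet a contradiction.

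A clean way to close this, and likely what the paper's one-line proof intends: run the same Lemma \ref{lem:jose2} argument with $\mu<\lambda$. For $\mu$ just below $\lambda$ one gets $T_{s,\lambda}\cap S\subset C_{s,\mu}\cap S$; by Items \ref{it:l2}--\ref{it:l3} this rules out a transversal $C_{s,\mu}$ (the inclusion would go the wrong way, strictly), and it also rules out $\emptyset$ and $S_1$, forcing $C_{s,\mu}\in\{S,\close{\mu/s}{S}\}$. Passing to the limit, $S$ or $\close{\lambda/s}{S}$ is a minimizer at $\lambda$ too; since either contains $S_2$ while $T_{s,\lambda}$ does not, $T_{s,\lambda}$ cannot be the \emph{maximal} minimizer---and that is all the paper ever uses (the level sets of $u_\lambda$ are maximal minimizers, Proposition \ref{cc}). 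If you insist on excluding $T_{s,\lambda}$ as \emph{any} minimizer, invoke the lattice property of minimizers: $T_{s,\lambda}\cup S$ would then also minimize, but this set fits none of the shapes allowed by Proposition \ref{lemma:gg}.
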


\begin{proof}
\begin{enumerate}
\item Assume $s,\lambda$ such that $\rinnen\leq r_2$. 
Set $r_i:=\frac{\lambda}{1-s},r_o:=\frac{\lambda}{s}$.
Assume that $S_1$ and $S_2$ are as in Figure \ref{fig:casesTrans}, that
is, $S_1$ is on the left side of $S_2$ and the centers are located at 
$(-r_1-r_2-d,0)$ and $(0,0)$ respectively.

Moving a circle with radius $r<r_2$ from left to right through $S_1$, 
we observe the following three cases 
\begin{enumerate}[a)]
\item  there is an arc with angular span $\leq \pi$ and the tangents direct towards
$S_1$
\item  there is an arc with angular span $>\pi$ and the tangents direct towards $S_1$
\item  the circle lies inside $S_2$, or the tangents do not point towards
$S_1$.
\end{enumerate} 

\begin{figure}
\begin{center}
\begin{tikzpicture}
 \fill[blue,opacity=0.2,xshift = -3.1 cm] (60:2) arc(60:-60:2);
 \fill[draw=black,fill=blue,opacity=0.2] (0,0) circle (1);
 \def\a{-0.74}
 \fill[white,opacity=0.2] (\a,-0.8) rectangle (-1,0.8);
 \draw[gray!60!] (\a,0) circle (0.7);
 \draw[gray!60!] (\a-0.4,0) circle (0.7);
 \draw[gray!60!] (\a-0.8,0) circle (0.7);
 \draw[blue,xshift = -0.74 cm] (90:0.7) arc(90:-90:0.7);
 \draw[blue!70!,xshift = -1.14 cm] (60:0.7) arc(60:-60:0.7);
 \draw[blue!70!,xshift = -1.54 cm] (30:0.7) arc(30:-30:0.7);
\node at (0,-1.2) {angular span $\leq \pi$};
\end{tikzpicture}
\begin{tikzpicture}
\fill[blue,opacity=0.2,xshift = -3.1 cm] (60:2) arc(60:-60:2);
\fill[draw=black,fill=blue,opacity=0.2] (0,0) circle (1);
\def\a{-0.74}
\fill[white,opacity=0.2] (\a,-0.8) rectangle (-1,0.8);
\draw[gray!60!] (\a,0) circle (0.7);
\draw[gray!60!] (\a+0.4,0) circle (0.7);
\draw[gray!60!] (\a+0.3,0) circle (0.7);
\draw[red,xshift = -0.74 cm] (90:0.7) arc(90:-90:0.7);
\draw[red!70!,xshift = -0.34 cm] (150:0.7) arc(150:-150:0.7);
\draw[red!70!,xshift = -0.44 cm] (120:0.7) arc(120:-120:0.7);
\node at (0,-1.2) {angular span $> \pi$};
\end{tikzpicture}
\begin{tikzpicture}[xscale = -1]
\fill[xscale=-1,blue,opacity=0.2,xshift = -3.1 cm] (60:2) arc(60:-60:2);
\fill[draw=black,fill=blue,opacity=0.2] (0,0) circle (1);
\def\a{-0.74}
\draw[gray!60!] (\a,0) circle (0.7);
\draw[gray!60!] (\a-0.4,0) circle (0.7);
\draw[gray!60!] (\a-0.8,0) circle (0.7);
\draw[violet,xshift = -0.74 cm] (90:0.7) arc(90:-90:0.7);
\draw[violet!70!,xshift = -1.14 cm] (60:0.7) arc(60:-60:0.7);
\draw[violet!70!,xshift = -0.14 cm] (0,0) circle (0.7);
\draw[violet!70!,xshift = -1.54 cm] (30:0.7) arc(30:-30:0.7);
\node at (0,-1.2) {no arc or};
\node at (0,-1.4) {wrong side};
\node[white] at (0,1.4) {wrong side};
\end{tikzpicture}
\end{center}
\caption{Moving a circle with radius $r<r_2$ from left to right through the
circle with radius $r_2$, we observe the following three cases: 
a) there is an arc with angular span $\leq \pi$ and the tangents direct to $S_1$,
b) there is an arc with angular span $>\pi$ and the tangents direct to $S_1$,
c) the circle lies inside $S_2$, or the tangents do not point towards $S_1$. 
}
\label{fig:casesTrans}
\end{figure}
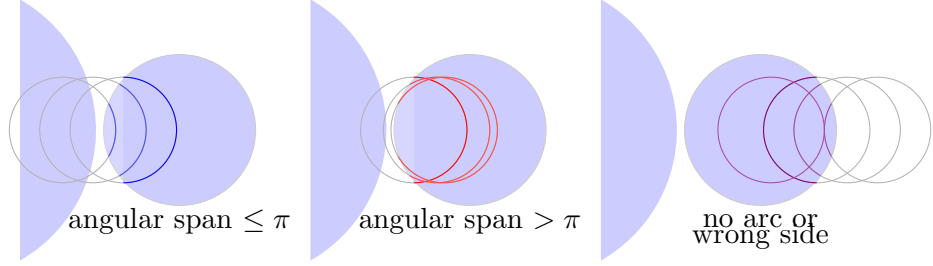

Set $\alpha$ the half angular span  of a circle, centered at 
$\lr{c(\alpha),0}$ intersecting with $\partial S_2$ 
as shown in Figure \ref{fig:TransNotation}. 
Explicitly we have $c(\alpha) = -\lr{r_i\cos\lr{\alpha} + r_2 \cos\lr{\arcsin\lr{\frac{r_i}{r_2}\sin\lr{\alpha}}}}$.
We can restrict our attention to 
circles with centers in $(c(\beta),0)$ for  $\beta \in (0,\pi/2)$ such that 
$c(\beta) \in \lr{-(r_2+r_i),-\sqrt{r_2^2-r_i^2}}$, case a).

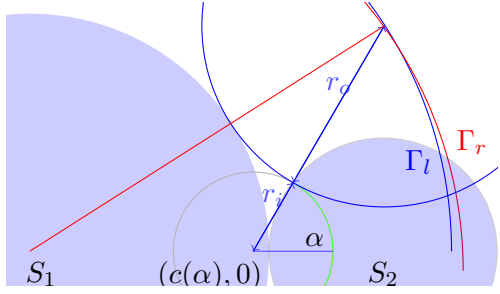
\begin{figure}
\begin{center}
\begin{tikzpicture}[scale = 1.5];
\clip (-3.3,-0.3) rectangle (1,2.2);
\def\r{1.6}
\def\R{2.1}
\def\RR{2.0}
\def\be{34}
\def\al{60}
 
\fill[blue,opacity=0.2,xshift = -3.1 cm] (0,0) circle (\R);

\draw[blue,opacity=1,xshift = -3.1 cm] (\al:\R+\r) arc(\al:-0:\R+\r);
\draw[red,opacity=1,xshift = -3.27 cm] (55:\R+\r) arc(\al:-0:\R+\r); 
 \node[red,xshift = -3.1 *1.5 cm] at (14:\R+\r+0.3) {$\Gamma_r$}
 ;
 \node[blue,xshift = -3.1 *1.5 cm] at (13:\R+\r-0.2) {$\Gamma_l$}
 ;
 
 \fill[draw=black,fill=blue,opacity=0.2] (0,0) circle (1);
 \def\a{-0.74}
 
 \draw[gray!60!] (\a-0.4,0) circle (0.7);
 \draw[green!70!,xshift = -1.14 cm] (\al:0.7) arc(\al:-\al:0.7);

 \draw [blue!70!,xshift = -1.14 cm] (0,0) -- (\al:0.7+\r) node (a) {};
 \draw [blue!70!,xshift = -1.14 cm,<->] (0,0) -- (\al:0.7) node[midway,above] (a2) {$r_i$};
 \draw [blue!70!,xshift = -1.14 cm,<->] (\al:0.7) -- (\al:0.7+\r) node[midway,above] (a3) {$r_o$};
 
 \draw [blue!70!,xshift = -1.14 cm] (0,0) -- (0:0.7);
 \node at (0,-0.2) {$S_2$};
 \node at (-3,-0.2) {$S_1$};
 \node [xshift = -2.28 cm,below] at (0,0) {$(c(\alpha),0)$};
 \node [] at (-0.6,0.1) {$\alpha$};
  \draw [blue,xshift = -1.14  cm] (\al:0.7+\r) circle (\r);
  \draw [blue,xshift = -1.14  cm] (0,0) -- (\al:0.7+\r);
  \draw[red, xshift = -1.14  cm] (\al:0.7+\r) -- (-3.1+1.14,0);
\end{tikzpicture}

\end{center}
\caption{Left: if $\Gamma_r,\Gamma_l$ intersect once in the first quadrant, there is one unique transversal set.
Right: Case of two transversal sets (inner and outer).}
\label{fig:TransNotation}
\end{figure}
To show that there exists maximal one transversal set in the case where 
$r_i\leq r_2$ we use the following construction:
Set 
\begin{align*}
\gamma_l(\beta)&:=
(r_1+r_o )
\begin{pmatrix}
\cos(\beta)\\ \sin(\beta)
\end{pmatrix}
\\
\gamma_r(\beta)&:=
\begin{pmatrix}
c(\alpha)+(r_i+r_o)\cos(\alpha)\\
(r_i+r_o)\sin(\alpha))
\end{pmatrix}
\end{align*}
and $\Gamma_l:=\set{\gamma_l(\beta),\beta \in (0,\pi/2)}, 
\Gamma_r:=\set{\gamma_r(\alpha),\alpha \in (0,\pi/2)}$.

$\Gamma_l$ contain the centers of the circles with radius $\frac{\lambda}{s}$ that are tangential to 
$\partial S_1$. 
$\Gamma_r$ contains the centers of circles that area tangential to the arcs 
inside $S_2$ at the intersection point with $\partial S_2$.

For a transversal set $\Gamma_{s,\lambda}$, the center of the arc 
$\partial \Gamma_{s,\lambda}\setminus S$ in the positive $y$-plane  must be an element of 
$\Gamma_s \cap \Gamma_l$ (condition to be smooth).

$\Gamma_l,\Gamma_r$ can be written as to striclty increasing, concave functions, hence they 
intersect at most once, i.e. 
there is only one set $(\beta_0,\alpha_0)$ with 
$\gamma_l(\beta_0) = \gamma_r(\alpha_0)$. This implies that there is at most one transversal set.

\item Denote by $\alpha_\mu,\beta_\mu$ the angular span the connected component
of $\partial T_{s,\mu} \cap S_2, \partial T_{s,\mu}\setminus S$, respectively.
Moreover denote by $p_1,p_2$ the intersection points of $\partial T_{s,\mu}$ and
$\partial S_2$. 
An arc inside $S_2$ with radius $\frac{\lambda}{1-s}$ intersecting
with $\partial S_2$ at $p_1,p_2$ has an angular span denoted by $\alpha_\lambda$
that is smaller than $\alpha_\mu$. 
Now if we continue smoothly with an arc with radius $\frac{\lambda}{s}$ at
$p_1,p_2$, then this arc will not intersect with $\partial S_1$.
In order to increase the angular span $\alpha_\lambda$, we have to move the arc
inside $S_2$ outside of $T_{\mu,s}$ until the arc outside of $S_2$ intersects
with $\partial S_1$. Hence at the end we see that $T_{\mu,s}\cap S \subset
T_{\lambda,s}\cap S$.
\begin{figure}
\begin{center}
\begin{tikzpicture}[scale = 3]

\clip (0.0,-0.4) rectangle (2.1,1.35);
\draw[fill=blue,fill opacity=0.2] (0,0) circle (1);
\draw[fill=blue,fill opacity=0.2] (1.6,0) circle (0.5);

\def\a{0.345}
\draw[red,thick] (40:1) arc(180+40:255:0.95) arc (75:-75:\a)
arc(90+15:180-40:0.95);

\draw[thick,blue] (43:1) 
arc(180+43:257:1.1) node[below] (a) {$p_1$}
arc (77:-77:0.37) node [above] (b) {$p_2$}
arc(90+15:180-40:1.19);
\node[right] at (1.17,0.1) {$\frac{\alpha_{\mu}}{2}$};
\node[right] at (0.89,0.6) {$\beta_{\mu}$};
\draw[] (40:1) -- (40:1.95) -- ++(255:0.95+\a)-- ++(0:\a);

\draw[xshift = 1.04 cm,violet,very thick,dotted] (-60:0.4)  arc(-60:60:0.4)
arc(180+60:90:1.1);

\fill (1.24,+0.34) circle (0.02);
\fill (1.24,-0.34) circle (0.02);
\end{tikzpicture}
\caption{Increasing the radius of the ball touching the points $p_1,p_2$, the smooth continuation with a circle, does not toucht $S_1$ anymore. Moving the circle with larger radius to the right, the outer circle touches again $S_1$.
}
\end{center}
\end{figure}
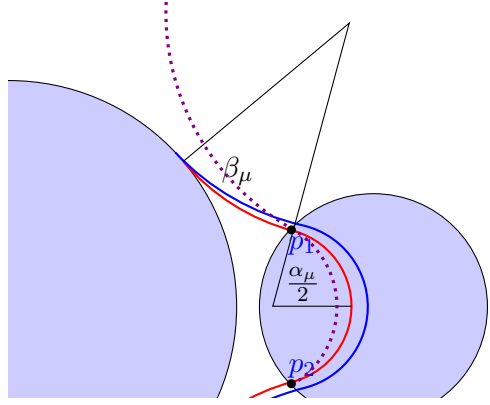

\item From the previous two items we know that if $\lambda \leq r_2(1-s)$, a transversal set is of increasing type. 
Lemma \ref{lem:jose2} states that transversal sets of increasing type cannot be a minimizers.
\item \changed{Follows from the previous 3 items, together with Lemma \ref{lem:jose2}, that states that transversal sets of increasing type cannot be minimizers of $\Fsl$.}
\end{enumerate}
\end{proof}

\section{The dual norm of $\chi_S$}\label{sect:dualnorm}

In this section we compute the dual norm $\|\chi_S\|_*$.
If $g=\chi_E$, then
$\norm{\chi_E}_\ast \leq \mu$
if and only if
\begin{equation*}
 0\leq \min_{F\subset \R^2} \set{P(F) - \frac{1}{\mu}\abs{F\cap E}} 
   = \min_{F\subset \R^2} \Fcal_{\mu}\lr{F}\;.
\end{equation*}
Let $C_{0,\lambda}$ minimize $\Fcal_{0,\lambda}$, then 
if $\lambda = \frac{\abs{C_{0,\lambda}\cap S}}{P(C_{0,\lambda})}$, 
$\lambda$ is the minimal parameter such that $u= 0$ minimizes \eqref{vpI}
and $\norm{\chi_{S}}_* = 
\frac{\abs{C_{0,\lambda}\cap S}}{P(C_{0,\lambda})}$. 
Hence, letting $\rho(X):=\frac{\abs{X \cap S}}{P(X)}$ for $X\subset\R^2$, we have
\begin{equation}\label{eqmax} 
\norm{\chi_{S}}_* = \max_{X\subset\R^2} \rho(X)\,.
\end{equation}

\begin{proposition}\label{pro:gnorm}
We have
\begin{equation*}
 \norm{\chi_S}_{*} = \max \set{
\frac{\abs{S_1}}{P(S_1)},\frac{\abs{S}}{P(co(S))}}\;.
\end{equation*}
\end{proposition}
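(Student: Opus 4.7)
Identity~\eqref{eqmax} recasts the problem as $\|\chi_S\|_* = \max_X \rho(X)$. Since $S_1$ and $\mathrm{co}(S)$ are both admissible competitors, and $\rho(\mathrm{co}(S)) = |S|/P(\mathrm{co}(S))$ thanks to $S \subset \mathrm{co}(S)$, the lower bound $\|\chi_S\|_* \ge \max\{|S_1|/P(S_1),\,|S|/P(\mathrm{co}(S))\}$ is immediate.

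For the reverse inequality, I would set $\lambda^* := \|\chi_S\|_*$ and, by standard BV compactness, pick a maximizer $X^*$, so that $\rho(X^*)=\lambda^*$ and $X^*$ minimizes $\mathcal{F}_{0,\lambda^*}$ with value zero. The plan is then to apply Proposition~\ref{lemma:gg} in the limit $s \to 0^+$ (so that the outer curvature $-s/\lambda$ vanishes, the outer boundary pieces straighten to tangent segments, and $\mathrm{Close}_{\lambda/s}(S)$ degenerates to $\mathrm{co}(S)$), which restricts $X^*$ to the finite list $\{\emptyset, S_1, S_2, S, \mathrm{co}(S), T\}$ with $T$ a transversal set in the sense of Definition~\ref{deftrans}. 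The easy candidates are dismissed by direct computation: $\rho(\emptyset)=0$, $\rho(S_2)=r_2/2\le r_1/2=\rho(S_1)$ (as $r_1\ge r_2$), and
\[
\rho(S)=\frac{r_1^2+r_2^2}{2(r_1+r_2)}\le \frac{r_1}{2},
\]
again because $r_1\ge r_2$; hence if $X^*\in\{S_1,\mathrm{co}(S)\}$ the formula follows.

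The remaining—and main—obstacle is the case $X^*=T$ of a genuine transversal set. Here I would exploit Lemma~\ref{le:transversalSets}: item~(\ref{it:l4}) rules out transversal minimizers whenever $\lambda^*<r_2$, so one may assume $\lambda^*\ge r_2$; item~(\ref{it:l3}) then organises the admissible decreasing-type transversal sets $T(\lambda)$ into a monotone one-parameter family that degenerates to $S_1$ at one endpoint of the admissible range and to $\mathrm{co}(S)$ at the other (when the inner arc of radius $\lambda$ flattens to a chord of $\partial S_2$). The self-consistency $\lambda^* = \rho(T(\lambda^*))$, together with an explicit computation of $\rho(T(\lambda))$ from the lengths of the two tangent segments, the inner arc in $S_2$, the free portion of $\partial S_1$, and the area cut off inside $S_2$, forces the fixed point to be attained at an endpoint of the family, so $\lambda^*\in\{\rho(S_1),\rho(\mathrm{co}(S))\}$. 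This delivers the sought upper bound and completes the proof.
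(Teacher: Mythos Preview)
Your lower bound and the reduction via Proposition~\ref{lemma:gg} (at $s=0$) to the list $\{\emptyset,S_1,S_2,S,\mathrm{co}(S),T\}$ are fine, as is the elimination of $\emptyset,S_2,S$. The difficulty is entirely in your treatment of the transversal case, and here the argument has a real gap.

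First, your appeal to Lemma~\ref{le:transversalSets}(\ref{it:l3}) is outside its hypotheses: that item is stated under the assumption $\frac{\lambda}{1-s}\le r_2$, which at $s=0$ reads $\lambda\le r_2$. But you invoke it precisely after having reduced to $\lambda^*\ge r_2$ via item~(\ref{it:l4}), so the monotone organisation you want is not provided by the lemma. Second, the claimed degeneration of the family $T(\lambda)$ to $\mathrm{co}(S)$ is not correct as stated: at $s=0$ a transversal set has, inside $S_2$, a circular arc of radius $\lambda$ (not a piece of $\partial S_2$), so letting this arc ``flatten to a chord'' (i.e.\ $\lambda\to\infty$) yields a set strictly smaller than $\mathrm{co}(S)$, not $\mathrm{co}(S)$ itself. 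Third, even granting a monotone one-parameter family with the right endpoints, the assertion that the self-consistency equation $\lambda^*=\rho(T(\lambda^*))$ forces $\lambda^*$ to an endpoint is unsupported: you would need a monotonicity or convexity property of $\lambda\mapsto\rho(T(\lambda))$ that is neither stated nor proved, and the promised ``explicit computation'' is not carried out.

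The paper avoids this difficulty by a different device. Rather than analysing transversal competitors at the given $r_2$, it \emph{varies} $r_2$: Lemma~\ref{le:smallerr_2} shows that if $S_1$ is the $\rho$-maximiser when the small ball has some radius $R$, then $S_1$ remains the maximiser for every smaller radius. One then locates the critical radius $\hat r_2$ at which $\rho(S_1)=\rho(\mathrm{co}(S))$, checks that near $\hat r_2$ the value $\rho(\mathrm{co}(S))$ is strictly below $r_2$ (so item~(\ref{it:l4}) of Lemma~\ref{le:transversalSets} excludes transversal minimisers there), and uses the monotonicity lemma to propagate to all $r_2<\hat r_2$. This sidesteps any direct comparison with transversal sets in the regime $\lambda^*\ge r_2$. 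If you want to salvage your direct approach, you would need an independent argument ruling out transversal $\rho$-maximisers when $\lambda^*\ge r_2$; the paper does not supply one.
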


For $r\in (0,r_1)$ we let $S(r):=S_1 \cup S_2(r)$,
where $S_2(r)$ is a ball of radius $r$ centered at the center of $S_2$.
Before proving the proposition, we need the following Lemma:

\begin{lemma}\label{le:smallerr_2}
If $S_1$ maximizes 
$X\to \frac{\abs{X\cap S(R)}}{P(X)}$
over all $X\subset \R^2$, for some $R>0$, then it also
maximizes $X\to \frac{\abs{X\cap S(r)}}{P(X)}$, for every $0<r < R$. 
\end{lemma}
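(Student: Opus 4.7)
The proof is essentially a monotonicity argument, since enlarging the ball $S_2(r)$ only enlarges the sets $S(r)$ and hence can only increase the functional $\rho_r(X) := |X\cap S(r)|/P(X)$, while the value of this functional at the candidate maximizer $S_1$ is completely insensitive to changes in $S_2(r)$.

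More precisely, my plan is as follows. Since $0 < r < R$, we have $S_2(r) \subset S_2(R)$ and therefore $S(r) = S_1 \cup S_2(r) \subset S_1\cup S_2(R) = S(R)$. Consequently, for any measurable $X\subset\R^2$,
\[
\abs{X\cap S(r)} \leq \abs{X\cap S(R)}.
\]
Dividing by $P(X)$ and invoking the hypothesis that $S_1$ maximizes $X\mapsto |X\cap S(R)|/P(X)$, one obtains
\[
\frac{\abs{X\cap S(r)}}{P(X)} \leq \frac{\abs{X\cap S(R)}}{P(X)} \leq \frac{\abs{S_1\cap S(R)}}{P(S_1)} = \frac{\abs{S_1}}{P(S_1)},
\]
where the final equality uses $S_1 \subset S(R)$.

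The next step is to observe that the upper bound is attained at $X=S_1$ also for the functional associated with $S(r)$: since $S_1\subset S(r)$, we have $|S_1\cap S(r)| = |S_1|$, so
\[
\frac{\abs{S_1\cap S(r)}}{P(S_1)} = \frac{\abs{S_1}}{P(S_1)}.
\]
Combining this equality with the chain of inequalities above shows that $S_1$ is indeed a maximizer of $X\mapsto |X\cap S(r)|/P(X)$, which is the desired conclusion. I do not anticipate any real obstacle in this argument; the only point one might want to verify is that the ball $S_1$ remains contained in $S(r)$ (trivially true by the very definition $S(r)=S_1\cup S_2(r)$) and that the common value $|S_1|/P(S_1)$ of the maximum is independent of the radius being considered.
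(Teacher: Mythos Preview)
Your proof is correct and considerably simpler than the paper's own argument. You exploit directly the two monotonicity facts that make the statement nearly tautological: the functional $X\mapsto |X\cap S(r)|/P(X)$ is nondecreasing in $r$ (because $S(r)\subset S(R)$), while its value at the candidate $S_1$ is independent of $r$ (because $S_1\subset S(r)$ for every $r$). Chaining these gives the result in one line.

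The paper proceeds quite differently. It first invokes the structural Proposition~\ref{lemma:gg} to restrict the possible maximizers of $X\mapsto |X\cap S(r)|/P(X)$ to the finite list $\{S_1,\,S(r),\,\mathrm{co}(S(r)),\,T^+_{\lambda}(S(r))\}$, then eliminates $S(r)$ by an explicit isoperimetric comparison, and finally rules out transversal competitors by a contradiction argument that, at its core, rediscovers the monotonicity you used from the outset. Your approach bypasses the classification entirely and would work verbatim for arbitrary sets (not just balls) whenever $S_1\subset S(r)\subset S(R)$; the paper's route, while consistent with the machinery developed elsewhere in the article, is heavier than necessary for this particular lemma.
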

\begin{proof}
Note that if $X$ maximizes 
\begin{equation}\label{eq:setmax} 
X\to \frac{\abs{X\cap S(r)}}{P(X)}
\end{equation}
then
$X$ minimizes $\Fcal_{0,\lambda}$ for $\lambda = 
\frac{\abs{X \cap S(r)}}{P(S(r))}$. 
Hence by Proposition \ref{lemma:gg} we know that any possible maximizer
of \eqref{eq:setmax} must be in 
\begin{equation*}
 \set{S_1, S(r), co(S(r)), T_{\lambda}^+(S(r))}\;.
\end{equation*}
Since by assumption $r \leq r_1$, we can exclude $S(r)$ since 
\begin{align*}
\frac{\abs{S_1}}{P(S_1)}&=\frac{r_1}{2}\geq \frac{r^2+r_1^2}{2(r+r_1)} =
\frac{\abs{S(r)}}{P(S(r))}\,
\;.
\end{align*}
It remains to exclude transversal sets. Assume by contradiction
that we can find $X_2 \subset \R^2$ with 
$X_2 \cap S_2(r) \not \in \set{\emptyset, S_2(r)}$, 
that  maximizes
\eqref{eq:setmax}. Then 
 $\frac{ \abs{S_1}} {P(S_1)} \leq \frac{\abs{X_2 \cap S(r)}}{P(X_2)}$.

Since by assumption $S_1$ 
maximizes $X\to \frac{\abs{X\cap S(R)}}{P(X)}$, we get 
$\frac{ \abs{X_2 \cap S(R)}}{P(X_2)} \leq \frac{\abs{S_1}}{P(S_1)}$.
Both conditions together yield
\begin{equation*}
\frac{ \abs{X_2 \cap S(R)} }{P(X_2)} 
\leq  \frac{\abs{S_1}}{P(S_1)}
\leq  \frac{\abs{X_2 \cap S(r)}}{P(X_2)}
\end{equation*}
such that $\abs{X_2 \cap S(R_2)} \leq \abs{X_2 \cap S(r_2)}$ contradicting 
the assumption $r_2 < R_2$ ($S(r_2)\subset S(R_2)$), hence we conclude  the
statement.
\end{proof}

Now we are ready to prove Proposition \ref{pro:gnorm}:

\begin{proof}
Set $\hat r_2$ as the radius larger than zero, such
that $\frac{\abs{S_1 \cup S_2(\hat r_2)}}{P(co(S(\hat r_2))} =
\frac{\abs{S_1}}{P(S_1)}$. 
It is basic calculus to proof that $\hat r_2$ exists and for 
$r_2<\hat r_2$ $\frac{\abs{S_1 \cup S_2(r_2)}}{P(co(S(r_2))} >
\frac{\abs{S_1}}{P(S_1)}$  and 
$\frac{\abs{S_1 \cup S_2(r_2)}}{P(co(S(r_2))} <
\frac{\abs{S_1}}{P(S_1)}$ for $r_2 > \hat r_2$ (see Figure \ref{fig:diagram}). 

Consider the following cases:
\begin{itemize}
\item $r_2>\hat r_2$.  
Set $\lambda:=\frac{\abs{S_1 \cup S(r_2)}}{P(co(S(r_2))}$. 
Then $\lambda<r_2$ 
(see Figure \ref{fig:diagram}). 
This implies that there is no 
transversal set minimizing $\Fcal_{0,\lambda}$. 
The only choices for minimizers are $S_1$ and $co(S)$, since we assume
$r_2>\hat r_2$, we have
$\Fcal_{0,\lambda}(S_1)>\Fcal_{0,\lambda}(co(S)) = 
\Fcal_{0,\lambda}(\emptyset)=0$, hence $co(S)$ is the optimal set and 
$co(S)$ maximizes \eqref{eq:setmax}.

\item Case $r_2=\hat r_2$ analog to the previous case, but 
$\Fcal_{0,\lambda}(S_1)=\Fcal_{0,\lambda}(co(S)) = 
\Fcal_{0,\lambda}(\emptyset)=0$, hence $co(S), S_1$ are the optimal sets and 
$S_1, co(S)$ maximize \eqref{eq:setmax}.

\item Case $r_2<\hat r_2$. 
We can find $\epsilon>0$ such that for 
For $r_2= \hat r_2 -\epsilon$,  
$\frac{ \abs{S_1 \cup S_2(r_2)} }{ P(co(S(r_2)) }<r_2$. 
Hence also in this case there would be no optimal transversal set. 
Moreover we have $\frac{ \abs{S_1 \cup S_2(r_2)} }{ P(co(S(r_2)) }
<\frac{\abs{S_1}}{P(S_1)}$, hence by Lemma \ref{le:smallerr_2}, we can conclude
that for all $r_2<\hat r_2 - \epsilon$, $S_1$ maximizes $X\rightarrow
\frac{\abs{X\cap S(r_2)}}{P(X)}$.
\end{itemize}
Hence we conclude that the only possible choices for an optimal set of
\eqref{eq:setmax} are $S_1$ and $co(S(r_2))$ and conclude the Lemma.
\end{proof}

\begin{figure}
\label{fig:diagram}
\begin{center}
\includegraphics[width=15cm]{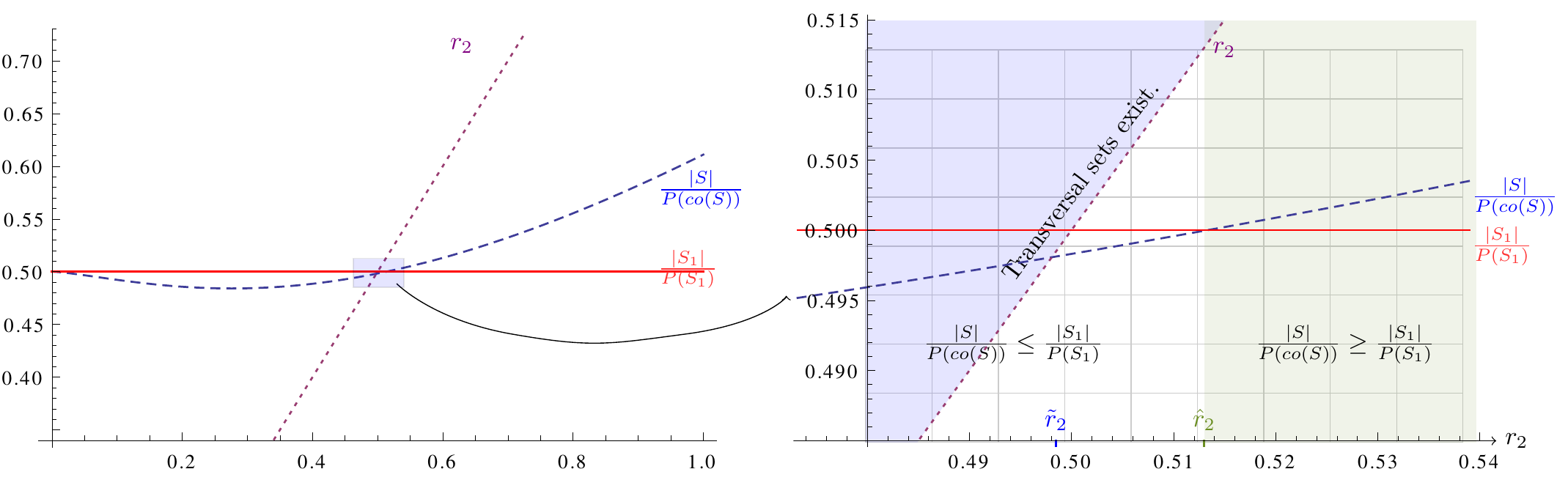}
\end{center}
\caption{ We fix $r_1=1$, vary $r_2$ and look at the ratios of
$\frac{S_1}{P(S_1)}$ and $\frac{S}{P(co(S))}$. 
Solid line $\frac{\abs{S_1}}{P(S_1)}$, dashed line
$\frac{\abs{S(r_2)}}{P(co(S(r_2)))}$ for different $r_2$, dotted line $r_2$.
Left: general situation, right: zoom around $r_2=0.5$.}
\label{fig:diagram}
\end{figure}


\section{Construction of the minimizers $C_{s,\lambda}$}\label{sect:construction}
\begin{construction} \label{co:GammaStar}
Let $\lambda>0$ and $s\in [0,1]$.
Understand $\close{\frac{\lambda}{0}}{S}$ as $co(S)$ (convex hull
of $S$).
\begin{enumerate}
\item Set $k:=0$,  $\Xsl^0:=S$, $\Ysl^0:=\close{\raussen}{S}$.
\item 
For $k=k+1$  set 
\begin{equation*}
\Xsl^k:=\open{\rinnen}{\Ysl^{k-1}}, 
\end{equation*}
and
\begin{equation*}
\Ysl^k:=\close{\raussen}{\Xsl^k \cap S}\;.
\end{equation*}

\item Finally define $\GammaS:= \bigcap_{k\in \N} \Ysl^k$
$(=\lim_{k\rightarrow \infty}\Ysl^k)$.
\end{enumerate}
\end{construction}

\begin{remark}\rm 
The sets $\Xsl^k$ and $\Ysl^k$ have the following properties:

\begin{enumerate}[i)]
\item   $\Xsl^0\subset \Ysl^{0}$ 
          and $\Ysl^0\cap S \subset \Xsl^0$ such that
         $\col{\Xsl^1} = \open{\rinnen}{\Ysl^0 \cap S} \subset
\open{\rinnen}{\Xsl^0} \subset 
         \col{\Xsl^0}$.
         Consequently $\Ysl^0 \subset \Ysl^1$ and so on. In general we have
	\begin{align*}
	 \Ysl^{k+1} \subset \Ysl^{k} \quad \text{ and } \quad
        \Xsl^{k+1} \subset \Xsl^{k}\;.
	\end{align*}
\item Due to the properties of the opening and the closing operators (see Lemma
\ref{le:propertiesOpeningClosing}) we have for the curvature $\kappa$: 
	 \begin{align*}
	  \kappa(\partial \Xsl ^k) &\leq \curvin 
&\text{and} &
	  &\kappa( \partial \Ysl^k) &\geq \curvout \;.
	  \end{align*}
\item If $\Ysl^1 = \Xsl^1$, then $\Ysl^k=\Ysl^1$ for all $k>1$ such that
        $\GammaS=\Ysl^1$. 
\item  In the case where $\Xsl^k \not = \Xsl^{k+1} =\open{\rinnen}{
\close{\raussen}{\Xsl^k \cap S}}
          = \open{\rinnen}{\Ysl^k}$, 
    there exists a part in $\partial \Ysl^k $ with curvature  larger than 
      $\curvin$. Applying another opening  to $\Ysl^{k}$ we replace this part,
but then
      $\partial \open{\rinnen}{ \Ysl^{k}}\setminus S$ might have parts with
 curvature different from 
       $-\frac{s}{\lambda}$. 
\item For every $k$ we have
       $
       \open{\rinnen}{S} \subset \Xsl^k, \Ysl^k \subset \close{\rinnen}{S}\;.
       $
\end{enumerate}
\end{remark}

\begin{remark}\rm
The sets $\GammaS$ have the following properties:
\begin{enumerate}[a)]
 \item  
 $\open{\rinnen}{\GammaS} \cap S = \GammaS \cap S$. 
\item $\close{\raussen}{\GammaS}=\GammaS$.
 \item $\partial \GammaS
\cap S$ has curvature $\frac{1-s}{\lambda}$, and $\partial \GammaS
\setminus S$ has curvature $-\frac{s}{\lambda}$. 

\item \label{item:boundaries} 
$\partial \GammaS$ is of class $\mathcal{C}^{1,1}$ and 
$-\frac{s}{\lambda} \leq \kappa\lr{\partial
\GammaS} \leq \frac{1-s}{\lambda}$.
\item If $\rinnen\leq r_2\leq r_1$ then $\GammaS=\close{\raussen}{S}$.
\item For $r_2<\rinnen \leq r_1$ then 
$S_1\subset\GammaS\subsetneq\close{\raussen}{S}$. 
\item For $\rinnen > r_1$,  $\GammaS = \emptyset$. 
\end{enumerate}
\end{remark}

We now give an explicit characterization of the solutions
 of \eqref{vpI}.
\begin{remark}\rm
By Lemma \ref{destruccio},
we may assume that
\begin{equation}\label{contextenodestruccio}
P(S) > P(\mathrm{co}(S)),
\end{equation}
otherwise the solution corresponding to $S$ is described by the
sum of the solutions corresponding to $S_1$ and $S_2$, that is, both sets do not
interact.
Moreover, by Lemma \ref{pro:gnorm} and Proposition \ref{p2} it is enough to consider
\begin{equation*}\label{eq:aquesta12}
\lambda \leq \max\left
\{\frac{|S_1|}{P(S_1)},
\frac{|S|}{P(\mathrm{co}(S))}\right
\},
\end{equation*}
otherwise  the solution of \eqref{vpI} is equal to zero.
\end{remark}
We start by comparing the energies of $S_1,S_2,S$ and
$\emptyset$. 

\begin{lemma}\label{le:comparScircle}
Let $\lambda  > 0$. We have
\begin{align*}
\min \set{0 , \Fsl\lr{S},\Fsl\lr{ S_1}, \Fsl\lr{S_2}}=
\begin{cases}
\Fsl\lr{ S}
&\frac{P(S_2)}{\abs{S_2}}\leq \frac{1-s}{\lambda},\\
\Fsl\lr{ {S_1}}
& \frac{P(S_1)}{\abs{S_1} }\leq \frac{1-s}{\lambda} \leq
\frac{P(S_2)}{\abs{S_2}},\\
0
& \frac{1-s}{\lambda} \leq \frac{P(S_1)}{\abs{S_1}}.
\end{cases}
\end{align*}
\end{lemma}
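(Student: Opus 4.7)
The plan is to reduce everything to elementary computation, since the four candidate sets are explicit. Writing $S = S_1 \cup S_2$ with disjoint open balls, we have $|S_i \cap S| = |S_i|$, $|S_i \setminus S|=0$, and (since the interiors are disjoint) $|S| = |S_1|+|S_2|$ and $P(S)=P(S_1)+P(S_2)$. Hence
\[
\Fsl(S_i)=P(S_i)-\frac{1-s}{\lambda}|S_i|,\qquad \Fsl(S)=\Fsl(S_1)+\Fsl(S_2),\qquad \Fsl(\emptyset)=0.
\]

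First I would record the sign criterion $\Fsl(S_i)\leq 0\iff \frac{P(S_i)}{|S_i|}\leq \frac{1-s}{\lambda}$, and use $r_1\geq r_2$ (equivalently $\frac{P(S_1)}{|S_1|}=\frac{2}{r_1}\leq \frac{2}{r_2}=\frac{P(S_2)}{|S_2|}$) to obtain the ordering of the two thresholds in the statement. This is the only geometric input required.

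Then I would split into the three regimes and use the additivity $\Fsl(S)=\Fsl(S_1)+\Fsl(S_2)$ to compare. In the first regime $\frac{P(S_2)}{|S_2|}\leq \frac{1-s}{\lambda}$, both $\Fsl(S_i)\leq 0$, so $\Fsl(S)\leq \Fsl(S_i)\leq 0$, giving $\Fsl(S)$ as the minimum. In the middle regime $\frac{P(S_1)}{|S_1|}\leq \frac{1-s}{\lambda}\leq \frac{P(S_2)}{|S_2|}$, we have $\Fsl(S_1)\leq 0\leq \Fsl(S_2)$, so $\Fsl(S)=\Fsl(S_1)+\Fsl(S_2)\geq \Fsl(S_1)$, and $\Fsl(S_1)\leq 0\leq \Fsl(S_2)$ shows $\Fsl(S_1)$ is the smallest. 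In the last regime $\frac{1-s}{\lambda}\leq \frac{P(S_1)}{|S_1|}$, every $\Fsl(S_i)\geq 0$ and $\Fsl(S)\geq 0$, so $0=\Fsl(\emptyset)$ wins.

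There is no serious obstacle: the statement is a clean book-keeping exercise once the additivity $\Fsl(S)=\Fsl(S_1)+\Fsl(S_2)$ and the ordering $\frac{P(S_1)}{|S_1|}\leq \frac{P(S_2)}{|S_2|}$ are in hand. The only subtlety worth noting is that when the balls are tangent one still has $P(S)=P(S_1)+P(S_2)$, which follows by approximation (or by the fact that a single point has zero $\mathcal{H}^1$-measure), so additivity of $\Fsl$ holds in all allowed configurations.
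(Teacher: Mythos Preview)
Your proof is correct and follows essentially the same approach as the paper: compute $\Fsl(S_i)=P(S_i)-\frac{1-s}{\lambda}|S_i|$, use the additivity $\Fsl(S)=\Fsl(S_1)+\Fsl(S_2)$, and compare signs across the three regimes determined by the thresholds $\frac{P(S_i)}{|S_i|}$. Your remark on the tangent case is a small extra that the paper omits, but otherwise the arguments coincide.
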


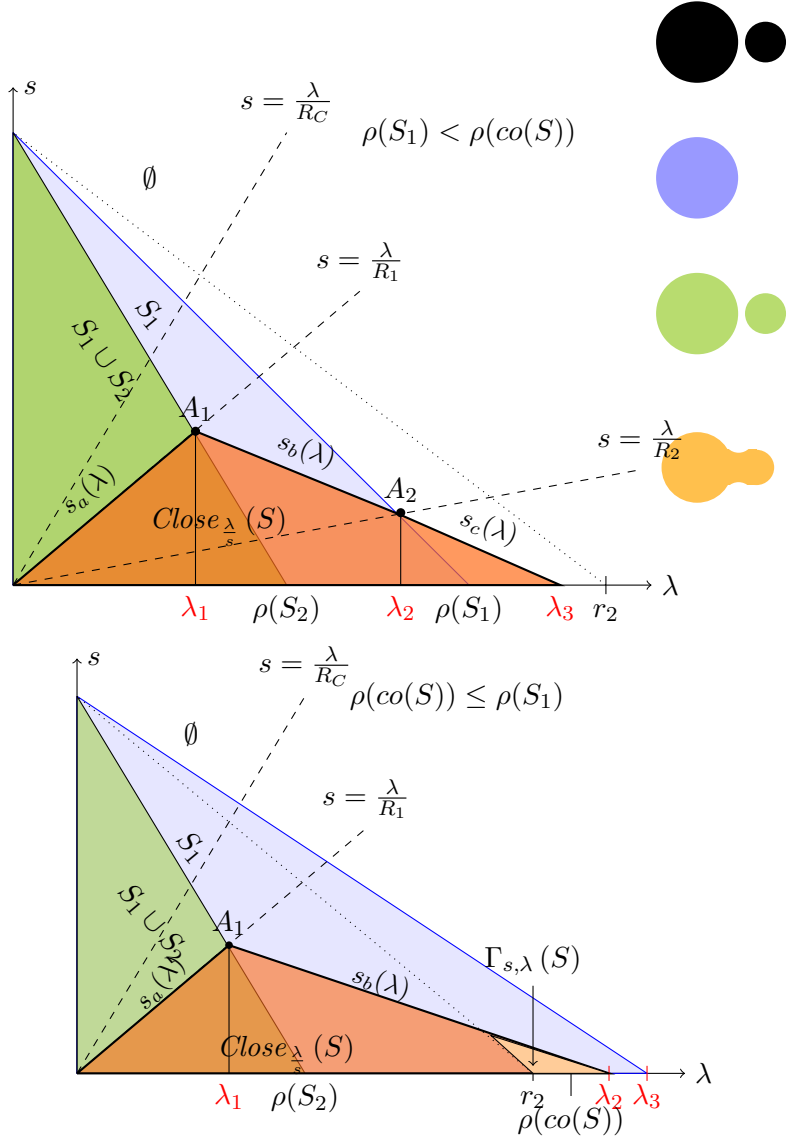
\begin{figure}
\begin{center}
\begin{tikzpicture}[scale=6]
\node at (1,1) {$\rho(S_1) <\rho(co(S))$};
\draw[->] (0,0) -- (1.4,0) node[right] {$\lambda$};
\draw[->] (0,0) -- (0,1.1) node[right] {$s$};
\filldraw[blue,opacity=0.1, blue,draw opacity=1] (0,1) --(0,0) -- (1,0)
node[below,black,opacity=1] {$\rho(S_1) $} -- (0,1);
\filldraw[YellowGreen,opacity=0.7,draw opacity=1,draw=black] (0,1)  --
(0,0)--(0.6,0) node [below,black,opacity=1] {$\rho(S_2)$} -- (0,1);
\filldraw[Red!70!Yellow!80!Orange!90!,opacity=0.7, draw
opacity=1,draw=black,thick] 
(0,0)  -- (0.4,0.34) 
node[midway,black, opacity=1,rotate=40,above] {\small{$s_a(\lambda)$}}
--(0.85,0.155) 
node[midway,black, opacity=1,rotate=-20,above] {\small{$s_b(\lambda)$}}
 -- (1.2,0)  node[midway,black, opacity=1,rotate=-20,above]
{\small{$s_c(\lambda)$}}
-- (1.2,0) node (a3) [below,red,opacity=1,] {$\lambda_3$}
 -- (0.85,0)  node (a2) [below,red,opacity=1] {$\lambda_2$} 
-- (0.4,0) node[below,red,opacity = 1] (a1)   {$\lambda_1$} --(0,0);
\draw (a1) -- (0.4,0.34);
\draw (a2) -- (0.85,0.155);

\node[rotate=-50] at (0.3,0.6) {$S_1$};
\node[rotate=-55] at (0.2,0.5) {$S_1 \cup S_2 $};

\node at (0.3,0.9) {$\emptyset$};
\node at (0.45,0.13)
{$\close{\raussen}{S}$};

 \begin{scope}[scale=1.5]
 \fill[black] (1.0,0.8) circle (0.06);
 \fill[black] (1.1,0.8) circle (0.03);
 \end{scope}
 
 \begin{scope}[scale=1.5]
 \fill[blue!40!] (1.0,0.6) circle (0.06);
 \end{scope}

  \begin{scope}[scale=1.5]
 \fill[YellowGreen!70!] (1.0,0.4) circle (0.06);
 \fill[YellowGreen!70!] (1.1,0.4) circle (0.03);
 \end{scope}
  \begin{scope}[xshift=0.2cm, scale=1.3]
 \fill[Orange!70!] (1.0,0.2) circle (0.06);
 \fill[Orange!70!] (1.1,0.2) circle (0.03);
  \fill[Orange!70!] (1.0,0.23) rectangle (1.1,0.17);
  \fill[white] (1.07,0.25) circle (0.025);
    \fill[white] (1.07,0.15) circle (0.025);
 \end{scope}

\fill (0.4,0.34) node[above] {$A_1$} circle (0.01);
\fill (0.85,0.16) node[above] {$A_2$} circle (0.01);

\draw[dotted] (1.3,0) -- (0,1);

\draw[dashed](0,0) -- (0.6,1) node[above] {$s=\frac{\lambda}{R_C}$};

\draw[dashed](0,0) -- (10.5:1.4) node[above] {$s=\frac{\lambda}{R_2}$};
\draw[dashed](0,0) -- (40.5:1) node[above] {$s=\frac{\lambda}{R_1}$};

\draw (1.3,0.02) -- (1.3,-0.02) node[below] {$r_2$};
\end{tikzpicture}

\begin{tikzpicture}[scale=5]
\node at (1,1) {$\rho(co(S)) \leq \rho(S_1) $};
 \draw[->] (0,0) -- (1.6,0) node[right] {$\lambda$};
 \draw[->] (0,0) -- (0,1.1) node[right] {$s$};

  \filldraw[blue,opacity=0.1, blue,draw opacity=1] (0,1) --(0,0) -- (1.5,0)
node[below, red,opacity=1] (l3) {$\lambda_3 $} -- (0,1);

\filldraw[YellowGreen,opacity=0.5,draw opacity=1,draw=black] (0,1)  --
(0,0)--(0.6,0) node [below,black,opacity=1] {$\rho(S_2)$} -- (0,1);

\filldraw[Red!70!Yellow!80!Orange!90!,opacity=0.6, draw
opacity=1,draw=black,thick] (0,0)  -- (0.4,0.34) 
--(1.4,0.0)  
node (a2) [below,red,opacity=1] {$\lambda_2$}  
 -- (0.4,0) node[below,red,opacity = 1] (a1)   {$\lambda_1$} --(0,0);
\draw (a1) -- (0.4,0.34);
\draw[red] (1.4,0.02) -- (1.4,-0.02); 
\draw[red] (1.5,0.02) -- (1.5,-.02); 

\draw (1.3,0.01) -- (1.3,-0.06) node[below] {$\rho(co(S))$};
\draw (1.2,0.01) -- (1.2,-0.02) node[below] {$r_2$};

\draw[dotted] (1.2,0) -- (0,1); 

\fill[orange!40!,draw = black] (1.2,0) -- (1.09,0.1) -- (1.4,0) -- (1.2,0);

\node[rotate = 50] at (0.22,0.24)  {\small{$s_a(\lambda)$}};
\node[rotate = -17] at (0.8,0.24) {\small{$s_b(\lambda)$}};

\node[rotate=-50] at (0.3,0.6) {$S_1$};
\node[rotate=-50] at (0.2,0.4) {$S_1 \cup S_2 $};
\node at (0.3,0.9) {$\emptyset$};
\node (b) at (1.2,0.3) {$\Gamma_{s,\lambda}\lr{S}$};
\draw[->] (b) -- (1.2,0.02);
\node at (0.55,0.05) {$\close{\raussen}{S}$};
\fill (0.4,0.34) node[above] {$A_1$} circle (0.01);
\draw[dashed](0,0) -- (40.5:1) node[above] {$s=\frac{\lambda}{R_1}$};
\draw[dashed](0,0) -- (0.6,1) node[above] {$s=\frac{\lambda}{R_C}$};
\end{tikzpicture} 

\caption{
Illustration of Theorem \ref{pr:explicitCircles}.
For any point $(\lambda,s)$ this diagram shows which one of the four sets
$S_1,S,\GammaS,\emptyset$ has the minimal $\Fsl$ value.
For $(\lambda,s) =(\lambda,1-\lambda \frac{P(S_1)}{\abs{S_1}}),
(\lambda,1-\lambda\frac{P(S_2)}{\abs{S_2}})$, the minima are not unique.
The dotted line indicates the values of $(s,\lambda)$ such that
$\rinnen = r_2$, hence there are no transversal minimizers on the left
of the dotted line.
}
\end{center}
\end{figure}

\begin{proof}
Observe that, for $i=1,2$,
 \begin{align*}
 \Fsl\lr{S_i}&=P(S_i)-\frac{1-s}{ \lambda} \abs{S_i} \leq 0 \quad \hbox{\rm if
and only if}
 \quad \frac{1-s}{\lambda}\geq  \frac{P(S_i)}{\abs{S_i}}.
 \end{align*}
 If $\frac{1-s}{\lambda} \geq \frac{P(S_2)}{\abs{S_2}}$, 
 then $\Fsl\lr{S}=\Fsl\lr{S_1}+\Fsl\lr{S_2}\leq \min \set{ 0,\Fsl\lr{ S_1},
\Fsl\lr{S_2} }$.
If $\frac{P(S_1)}{\abs{S_1} }\leq \frac{1-s}{\lambda} <
\frac{P(S_2)}{\abs{S_2}}$, then
$\Fsl\lr{S_2}> 0$ and $\Fsl\lr{S}=\Fsl\lr{S_1}+\Fsl\lr{S_2} > \Fsl\lr{S_1}$.
In case $\frac{1-s}{\lambda} < \frac{P(S_1)}{\abs{S_1}}$, $\Fsl\lr{ S_1}>0$, $
\Fsl\lr{S_2} > 0$
and $\min \set{0 , \Fsl\lr{S},\Fsl\lr{ S_1}, \Fsl\lr{S_2}}=0$.
\end{proof}

We now define special values of $\lambda$
which will be useful  in order to classify the minimizers of
$\Fsl$.

\begin{proposition}\label{de:lambda}
Assume that \eqref{contextenodestruccio} holds.
Let $R_c(S)$  be the minimal radius $r$ such that $\close{r}{S}$ is connected.
\begin{enumerate}[(a)]
\item \label{it:lambdaa} There is a unique value $R_1\in [R_c(S),\infty)$ such that
\begin{align}\label{eq:R1}
 P\lr{\close{R_1}{S}}+\frac{1}{R_1}\abs{ \close{R_1}{S} \setminus S} &=P(S).
 \end{align}
Let $s_2(\lambda) = 1 - \lambda \frac{P(S_2)}{|S_2|}$ and $\lambda_1$ be given
by
$\frac{\lambda_1}{s_2(\lambda_1)}  = R_1$. Then $\lambda_1 :=
\frac{R_1\abs{S_2}}{R_1 P(S_2)+\abs{S_2}} \in \left[0,\fraczwei \right]$ and
\begin{equation}\label{eq:lambda1Circles}
\Fcal_{s_2(\lambda),\lambda}\lr{\close{\frac{\lambda}{s_2(\lambda)}}{S}} >
\Fcal_{s_2(\lambda),\lambda}\lr{S}
\qquad \hbox{\rm (resp. $=$, $<$)}
\end{equation}
for any $\lambda < \lambda_1$ (resp $=$, $>$).
The value $\lambda_1= 0$ if and only if $R_1=0$, and this happens if and only if
$R_c(S)=0$, in other words if the two balls touch eachother. 
If $R_c(S) > 0$, then $R_1 > R_c(S)$.

\item\label{it:lambdab}
\begin{enumerate}[i)]
	\item
		If $\fraceins < \fracco$, there is a unique value $R_2\in 
[R_c(S),\infty)$ such that
		\begin{align}\label{eq:R2}
		P\lr{\close{R_2}{S}}+\frac{1}{R_2}\abs{ \close{R_2}{S}\setminus
S} &=P(S_1)\frac{\abs{S} }{\abs{S_1} }.
		\end{align}
Let $s_1(\lambda) := 1 - \lambda \frac{P(S_1)}{|S_1|}$ and 
$\lambda_2  :=
\frac{R_2\abs{S_1}}{R_2 P(S_1)+\abs{S_1}} \in \left[\lambda_1,\fraceins\right]$.
Then $\frac{\lambda_2}{s_1(\lambda_2)}  = R_2$
and
\begin{equation}\label{eq:lambda2Circles}
\Fcal_{s_1(\lambda),\lambda}\lr{\close{\frac{\lambda}{s_1(\lambda)}}{S}} >
\Fcal_{s_1(\lambda),\lambda}\lr{S_1}
\qquad \hbox{\rm (resp. $=$, $<$)}
\end{equation}
for any $\lambda < \lambda_2$ (resp $=$, $>$).
We have that $R_1=R_2$ if and only if $\frac{P(S_1)}{|S_1|} =
\frac{P(S_2)}{|S_2|}$ if and only if
$\lambda_1 = \lambda_2$. And $R_2 = 0$ (in that case also $R_1=0$ and $\lambda_1
= \lambda_2=0$) if and only if $R_c(S) = 0$.

\item If $\fracco \leq \fraceins$ set $\lambda_2$ as the solution of 
$\lambda_2=\frac{\abs{\Gamma_{0,\lambda_2}\lr{S}\cap
S_2}}{P(\Gamma_{0,\lambda_2 } \lr {S})-P(S_1)}$.

If $\frac{\abs{S_2}}{P(co(S))-P(S_1)}\leq r_2$, then
$\lambda_2=\frac{\abs{S_2}}{P(co(S))-P(S_1)}$.
\end{enumerate}

\item \label{it:lambdac} Set $\lambda_3:=\max\set{\fraceins,\fracco}$.
\end{enumerate}
\end{proposition}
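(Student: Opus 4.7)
The three statements share a common architecture, so I would handle~\eqref{it:lambdaa} in detail, indicate the parallel argument for~\eqref{it:lambdab}, and observe that~\eqref{it:lambdac} is a pure definition. For~\eqref{it:lambdaa} the plan is to realize $R_1$ as the unique root of
\[
 F(r) := P\lr{\close{r}{S}} + \frac{1}{r}\abs{\close{r}{S}\setminus S}
\]
on $[R_c(S),\infty)$, via continuity, strict monotonicity, and the appropriate limiting behavior, and then translate the comparison between $\lambda$ and $\lambda_1$ into the comparison between $F$ and $P(S)$.

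Continuity of $F$ is immediate from the Hausdorff-continuity of the monotone family $r\mapsto\close{r}{S}$. For strict monotonicity on $(R_c(S),\infty)$, I would exploit the explicit description of $\close{r}{S}$ in the two-ball geometry: its boundary consists of the outer parts of $\partial S_1,\partial S_2$ joined by two symmetric circular arcs of radius $r$ tangent to both $\partial S_1$ and $\partial S_2$, whose centers and tangent points are given by elementary trigonometry in $r$, $r_1$, $r_2$, and the inter-center distance. Differentiating the resulting parametric expressions for $P(\close{r}{S})$ and $|\close{r}{S}\setminus S|$ in $r$ yields $F'(r)<0$. As $r\to\infty$, $\close{r}{S}\to\mathrm{co}(S)$ in Hausdorff distance, so $F(r)\to P(\mathrm{co}(S))$, which is strictly less than $P(S)$ by~\eqref{contextenodestruccio}; at $r=R_c(S)$ a local geometric computation gives $F(R_c(S))\ge P(S)$, with strict inequality precisely when $R_c(S)>0$. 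This yields the unique $R_1\in[R_c(S),\infty)$ with $F(R_1)=P(S)$, and $R_1=0$ iff $R_c(S)=0$ iff the two balls touch.

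Solving $R_1=\lambda_1/s_2(\lambda_1)$ with $s_2(\lambda)=1-\lambda P(S_2)/|S_2|$ yields the stated formula for $\lambda_1$. Using $|\close{r}{S}\cap S|=|S|$ and $(1-s_2(\lambda))/\lambda=P(S_2)/|S_2|$, a direct computation reduces the energy difference to
\[
 \Fcal_{s_2(\lambda),\lambda}\lr{\close{\lambda/s_2(\lambda)}{S}} - \Fcal_{s_2(\lambda),\lambda}(S) = F\lr{\lambda/s_2(\lambda)} - P(S),
\]
whose sign tracks $\lambda_1-\lambda$ because $F$ is strictly decreasing while $\lambda\mapsto\lambda/s_2(\lambda)$ is strictly increasing on $[0,|S_2|/P(S_2)]$; this is exactly~\eqref{eq:lambda1Circles}. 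Part~\eqref{it:lambdab} in the regime $|S_1|/P(S_1)<|S|/P(\mathrm{co}(S))$ is handled analogously: replacing $S$ by $S_1$ on the reference side and using $s_1(\lambda)=1-\lambda P(S_1)/|S_1|$, the same manipulation produces $F(\lambda/s_1(\lambda))-P(S_1)|S|/|S_1|$, and the hypothesis places the target value strictly between $P(\mathrm{co}(S))$ and $F(R_c(S))$, so that $F$ hits it at a unique $R_2$. In the complementary sub-case $|S|/P(\mathrm{co}(S))\leq|S_1|/P(S_1)$ the convex envelope is no longer realized as a closing for the relevant $\lambda$, so the transversal family $\Gamma_{0,\lambda}(S)$ of Section~\ref{sect:construction} replaces the closing and $\lambda_2$ is defined by the corresponding equal-energy condition $\Fcal_{0,\lambda_2}(\Gamma_{0,\lambda_2}(S))=\Fcal_{0,\lambda_2}(S_1)$.

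The main obstacle is the pair of claims $F(R_c(S))>P(S)$ when $R_c(S)>0$ and $F'(r)<0$ on $(R_c(S),\infty)$: both amount to explicit trigonometric estimates on the lengths and enclosed areas of the two symmetric tangent arcs, easiest to verify first in the symmetric case $r_1=r_2$ and then extend to the general case by a perturbation argument.
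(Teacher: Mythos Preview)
Your architecture matches the paper's: define $F(r)=P(\close{r}{S})+\tfrac{1}{r}|\close{r}{S}\setminus S|$, show it is continuous and strictly decreasing on $[R_c(S),\infty)$ with the right endpoint behavior, locate $R_1$ as the unique root of $F(r)=P(S)$, and then reduce the energy comparison~\eqref{eq:lambda1Circles} to the sign of $F(\lambda/s_2(\lambda))-P(S)$ via the identity
\[
 \Fcal_{s,\lambda}\bigl(\close{\lambda/s}{S}\bigr)-\Fcal_{s,\lambda}(S)=P\bigl(\close{\lambda/s}{S}\bigr)+\tfrac{s}{\lambda}\bigl|\close{\lambda/s}{S}\setminus S\bigr|-P(S).
\]
Part~(b)(i) is handled by the same mechanism with target value $P(S_1)|S|/|S_1|$, and~(b)(ii) via the equal-energy condition between $\Gamma_{0,\lambda}(S)$ and $S_1$; all of this is exactly what the paper does.

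The one genuine methodological difference is how you establish the two ``main obstacles'' you flag. You propose an explicit trigonometric parametrization of $\partial\close{r}{S}$ followed by differentiation (and a perturbation from the equal-radius case), which would certainly work but is laborious. The paper instead proves strict monotonicity of $F$ variationally: Lemma~\ref{le:arcisbest} says a circular arc of radius $r$ minimizes $\int\sqrt{1+h'^2}-\tfrac{1}{r}h$ among graphs with fixed endpoints, and Lemma~\ref{le:closingisbest} then gives $\Fsl(\close{\lambda/s}{S})\le\Fsl(\close{\mu/s}{S})$ for $\mu<\lambda$, from which Lemma~\ref{le:FSL}(iii) deduces that $F$ is strictly decreasing with no computation at all. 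Likewise, $F(R_c(S))>P(S)$ when $R_c(S)>0$ needs no trigonometric estimate: since $S_1,S_2$ are convex, closing can only add boundary, so $P(\close{R_c(S)}{S})>P(S)$ already, and the $\tfrac{1}{R_c(S)}|\cdot|$ term is nonnegative. Your route is correct but the paper's variational shortcut is cleaner and avoids the case analysis you anticipate.
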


\begin{remark}{\rm
Observe that when $\frac{|S_{1}|}{P(S_{1})} =  \frac{|S|}{P(\mathrm{co}(S))}$,
$\lambda_2 = \frac{|S_{1}|}{P(S_{1})} = \lambda_3$.
}\end{remark}

Now we are ready to describe the minimizers of $\Fsl$. For simplicity, from now on
we denote
by $C_{s,\lambda}$ the largest minimizer of $\Fsl$ (see Proposition \ref{cc}).

\begin{theorem}\label{pr:explicitCircles}
Assume that \eqref{contextenodestruccio} holds.
Let $\lambda_1,\lambda_2,\lambda_3$ be as in Proposition \ref{de:lambda}.
Then the sets $\Cs$ are given by:
\begin{enumerate}[(a)]
\item Let $\lambda \in [0,\lambda_1]$. There is a value $0 < s_a(\lambda)\leq
1-\lambda\fraczweii$ such that

\begin{equation*}
\Cs=
\begin{cases}
\close{\raussen}{S} &0\leq s \leq s_a(\lambda)\\
S &s_a(\lambda)< s \leq 1-\lambda\fraczweii\\
S_1 & 1-\lambda\fraczweii< s \leq 1-\lambda\fraceinsi\\
\emptyset &1-\lambda\fraceinsi <s.
\end{cases}
\end{equation*}
The third interval is empty in the case $\fraceins=\fraczwei$.

\item Let $\lambda \in (\lambda_1,\lambda_2]$.
There is a value 
$1-\lambda\fraczwei < s_b(\lambda) \leq  1-\lambda\fraceinsi $
such that
\begin{equation*}
\Cs=
\begin{cases}
\GammaS &0\leq s \leq s_b(\lambda)\\
S_1 &s_b(\lambda)< s \leq 1-\lambda\fraceinsi\\
\emptyset &1-\lambda\fraceinsi <s \, , 
\end{cases}
\end{equation*}
and $\GammaS = \close{\raussen}{S}$ as long as $\frac{\lambda}{1-s}\leq r_2$. 

\item  Let $\lambda \in (\lambda_2,\lambda_3]$.

\begin{enumerate}[]
\item[(c1)] If $\fraceins<\fracco$, then there is a value $ s_c(\lambda) > 1-\lambda
\frac{P(S_1)}{|S_1|}$
such that

\begin{equation*}
\Cs=
\begin{cases}
\close{\raussen}{S} &0\leq s \leq  s_c(\lambda) \\ 
\emptyset &\text{else}.
\end{cases}
\end{equation*}

\item[(c2)] If $\fracco \leq \fraceins$, then
\begin{equation*}
\Cs=
\begin{cases}
S_1 &0\leq s \leq 1-\lambda \fraceinsi \\
\emptyset &\text{else}.
\end{cases}
\end{equation*}
\end{enumerate}

\item[(d)] For $\lambda>\lambda_3$ $\Cs = \emptyset$. 
\end{enumerate}

\end{theorem}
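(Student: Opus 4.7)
The plan is to combine Proposition \ref{lemma:gg} and Lemma \ref{le:transversalSets} to reduce the set of candidates for $\Cs$ to the family $\{\emptyset, S_1, S, \Gamma_{s,\lambda}(S)\}$. Indeed, Proposition \ref{lemma:gg} already shows that the largest minimizer must lie in $\{\emptyset, S_1, S, \close{\lambda/s}{S}\}$ or be a transversal set; Step 1 of that proposition rules out $S_2$ (under $r_1\ge r_2$; when $r_1=r_2$ the symmetric partner $S_2$ coexists with $S_1$ but is not the maximal minimizer). By Lemma \ref{le:transversalSets}, transversal sets of increasing type are never optimal, so the only relevant transversal candidates are those of decreasing type, which are absorbed into $\Gamma_{s,\lambda}(S)$ together with the possible limiting values $S_1$ or $\close{\lambda/s}{S}$.

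Next, within each $\lambda$-regime determined by $\lambda_1,\lambda_2,\lambda_3$, I would compare the four candidate energies as functions of $s$, exploiting the monotonicity $s\mapsto \Cs$ decreasing (Proposition \ref{cc}). Two threshold values come directly from Lemma \ref{le:comparScircle}: $\mathcal{F}_{s,\lambda}(S_1)=0$ at $s=1-\lambda P(S_1)/|S_1|$, and $\mathcal{F}_{s,\lambda}(S)=\mathcal{F}_{s,\lambda}(S_1)$ at $s=1-\lambda P(S_2)/|S_2|$. The remaining thresholds $s_a(\lambda),s_b(\lambda),s_c(\lambda)$ come from comparing $\mathcal{F}_{s,\lambda}(\Gamma_{s,\lambda}(S))$ with $\mathcal{F}_{s,\lambda}(S)$ or $\mathcal{F}_{s,\lambda}(S_1)$; the defining equations \eqref{eq:lambda1Circles}, \eqref{eq:lambda2Circles} provide exactly the sign information needed to locate these crossings. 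For instance, in case (a), since $\lambda\le\lambda_1$, \eqref{eq:lambda1Circles} shows that at $s=1-\lambda P(S_2)/|S_2|$ we already have $\mathcal{F}_{s,\lambda}(\close{\lambda/s}{S})\ge \mathcal{F}_{s,\lambda}(S)$, hence by continuity there exists $s_a(\lambda)$ below this value at which $\close{\lambda/s}{S}$ and $S$ are tied; above $s_a(\lambda)$ the set $S$ wins, and the remaining intervals follow from Lemma \ref{le:comparScircle}. Regime (b) and the two subcases of (c) are handled analogously, with the roles of $\close{\lambda/s}{S}$ (or the transversal set) versus $S_1$, respectively versus $S$, swapping according to the sign of $\frac{|S_1|}{P(S_1)}-\frac{|S|}{P(\mathrm{co}(S))}$. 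Case (d) is immediate from Proposition \ref{p2} combined with Proposition \ref{pro:gnorm}.

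The key identity that underlies the construction of $s_a,s_b,s_c$ is that the map $s\mapsto \mathcal{F}_{s,\lambda}(\Gamma_{s,\lambda}(S))-\mathcal{F}_{s,\lambda}(S_i)$ (for $i$ determined by the regime) is continuous and strictly monotone in $s$: this follows from the geometric description of $\Gamma_{s,\lambda}(S)$ (whose inner arcs have radius $\lambda/(1-s)$ and outer arcs radius $\lambda/s$) together with the monotonicity statement in Lemma \ref{le:transversalSets} \ref{it:l3}. Combining this with Lemma \ref{le:comparScircle} locates each $s_\bullet(\lambda)$ uniquely and establishes the stated inequalities $s_a(\lambda)\le 1-\lambda P(S_2)/|S_2|$ and $s_b(\lambda)>1-\lambda P(S_2)/|S_2|$.

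The main obstacle is the last comparison: controlling the energy $\mathcal{F}_{s,\lambda}(\Gamma_{s,\lambda}(S))$ when $\Gamma_{s,\lambda}(S)$ is a genuine transversal set (not $\close{\lambda/s}{S}$ nor $S_1$). One has to verify that at the precise values $\lambda_1,\lambda_2$ prescribed by Proposition \ref{de:lambda} the transition really occurs between the correct pair of candidates, and that no other crossing (for instance between $\Gamma_{s,\lambda}(S)$ and $S_1$ in the middle of regime (a)) intervenes. This forces a careful bookkeeping of the $R_1,R_2$ radii, and an application of Lemma \ref{le:transversalSets} \ref{it:l4} to discard transversal sets whenever $\lambda<r_2(1-s)$; the remark that $\Gamma_{s,\lambda}(S)=\close{\lambda/s}{S}$ as long as $\lambda/(1-s)\le r_2$ makes this bookkeeping tractable and yields the last sentence of case (b).
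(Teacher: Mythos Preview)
Your overall strategy---reduce to the finite candidate list via Proposition~\ref{lemma:gg} and Lemma~\ref{le:transversalSets}, then locate the crossing points of the candidate energies in $s$ using Lemma~\ref{le:comparScircle} and the definitions of $\lambda_1,\lambda_2,\lambda_3$---is exactly the paper's. Case~(d) is also handled the same way.

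There is, however, a real gap in your justification of the uniqueness and location of $s_a,s_b,s_c$. You assert that $s\mapsto \Fsl(\Gamma_{s,\lambda}(S))-\Fsl(S_i)$ is strictly monotone in $s$ and cite Lemma~\ref{le:transversalSets}\,\ref{it:l3} for this. But that item compares $T_{s,\mu}$ and $T_{s,\lambda}$ for fixed $s$ and varying $\mu\le\lambda$; it says nothing about variation in $s$. Nor does set-monotonicity of $\Gamma_{s,\lambda}(S)$ (were it established in $s$) directly yield monotonicity of the energy \emph{difference}. The paper does not prove such a monotonicity statement at all. Instead it proceeds in two different ways depending on the case. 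For case~(a) (and the parts of (b),(c) where $\Gamma_{s,\lambda}(S)=\close{\lambda/s}{S}$), it rewrites the difference as
\[
\Fsl\!\left(\close{\tfrac{\lambda}{s}}{S}\right)-\Fsl(S)
= P\!\left(\close{\tfrac{\lambda}{s}}{S}\right)+\tfrac{s}{\lambda}\,\bigl|\close{\tfrac{\lambda}{s}}{S}\setminus S\bigr|-P(S),
\]
and invokes Lemma~\ref{le:FSL}\,\ref{it:FSL3b} (itself resting on Lemmas~\ref{le:arcisbest} and~\ref{le:closingisbest}) to get strict monotonicity of the right-hand side in $r=\lambda/s$; this pins down $s_a(\lambda)=\lambda/R_1$ exactly. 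For case~(b), where the energy difference with $S_1$ is the relevant one, the paper does \emph{not} claim monotonicity of the difference: it first produces a crossing $s_b(\lambda)$ by the intermediate value theorem, and then rules out a second crossing by contradiction, using the level-set monotonicity from Proposition~\ref{cc} (if the order flipped back at some $t_1<t_2$, one would get $C_{t_1,\lambda}=S_1\subsetneq C_{t_2,\lambda}$, which is impossible). You mention Proposition~\ref{cc} but only as a framing device; it is precisely this contradiction argument that does the work you are attributing to a monotonicity you have not proved. Supplying Lemma~\ref{le:FSL} for the $\close{}{}$ comparisons and the level-set contradiction for the $S_1$ comparison would close the gap.
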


Figure \ref{fi:solution} shows solutions of \eqref{vpI} for different $\lambda$,
when $S$ is the union of two balls with
radii $r_1=1.2, r_2=1$ and distance $d=0.05$ (case
$\frac{\abs{S_1}}{P(S_1)}<\frac{\abs{S}}{P(co(S))}$). 

In order to prove Proposition \ref{de:lambda} and Theorem \ref{pr:explicitCircles} we need
the following Lemmas:

\begin{lemma} \label{le:arcisbest}
Let $0<R<r$ and define

\begin{equation}\label{eq:Gr}
 G_{r}(h):=\int_{-R}^{R} \lr{\sqrt{1+h'(x)^2}- \frac{1}{r} h(x)}\;dx.
\end{equation}
The function that represents an arc of a circle  with radius $r$ (angular span
smaller than $\pi$) from $-R$ to $R$,
minimizes $G_r(h)$ under all functions $h$ with $h(-R)=0, h(R)=0$.
\end{lemma}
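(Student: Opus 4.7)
My plan is to exploit the joint convexity of the Lagrangian. The integrand $L(h,p):=\sqrt{1+p^2}-\frac{1}{r}h$ is the sum of a strictly convex function of $p$ and a linear function of $h$, so it is jointly convex in $(h,p)$. Consequently $G_r$ is a convex functional on the class of (say) Lipschitz functions $h$ with $h(\pm R)=0$, and any critical point is automatically a global minimizer.

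Next I would write down the Euler--Lagrange equation,
\[
\frac{d}{dx}\!\left(\frac{h'}{\sqrt{1+h'^2}}\right)=-\frac{1}{r},
\]
whose left-hand side is the signed curvature of the graph of $h$; critical points therefore have graphs of constant curvature $-1/r$. The upper arc $h^\ast(x):=\sqrt{r^2-x^2}-\sqrt{r^2-R^2}$ is readily checked to satisfy both the E--L equation and the boundary conditions $h^\ast(\pm R)=0$, and since $R<r$ its angular span $2\arcsin(R/r)$ is strictly less than $\pi$, matching the statement. Note that the negative coefficient of $h$ in $L$ rewards large positive $h$, so the relevant arc is the one above, not below, the chord.

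Combining these observations, $h^\ast$ is a critical point of a convex functional and is therefore a global minimizer of $G_r$. Existence of a minimizer in the admissible class follows from the direct method once coercivity is verified: for $h\ge 0$ with $h(\pm R)=0$ one has $\int_{-R}^R h\,dx\le R\int_{-R}^R|h'|\,dx$, so
\[
G_r(h)\ge \left(1-\frac{R}{r}\right)\int_{-R}^R\sqrt{1+h'^2}\,dx,
\]
which is bounded below because $R<r$. I expect the only slightly technical point to be selecting an appropriate function space in which coercivity and lower semicontinuity both hold; the geometric heart of the argument (that the arc solves a convex variational problem) is routine, and strict convexity of $L$ in $p$ additionally yields uniqueness.
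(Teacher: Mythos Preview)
Your argument is correct. The paper itself does not give a proof of this lemma at all; it simply cites \cite[Lemma 4.29]{ACMBook}. Your convexity approach is the standard and cleanest way to handle such a one-dimensional variational problem: the Lagrangian $L(h,p)=\sqrt{1+p^2}-\tfrac{1}{r}h$ is jointly convex (strictly in $p$), so the functional $G_r$ is convex on the affine class $\{h:h(\pm R)=0\}$, and any solution of the Euler--Lagrange equation is a global minimizer. Your verification that $h^\ast(x)=\sqrt{r^2-x^2}-\sqrt{r^2-R^2}$ satisfies the Euler--Lagrange equation and the boundary conditions is accurate, and the identification of the left-hand side as the signed curvature is exactly the geometric content.

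Two small remarks. First, the coercivity/existence paragraph is superfluous: once you know that $h^\ast$ is a critical point of a convex functional, the pointwise convexity inequality
\[
L(h,h')\ \ge\ L(h^\ast,(h^\ast)') + L_h\,(h-h^\ast) + L_p\,(h'-(h^\ast)')
\]
integrates (with one integration by parts using $h(\pm R)=h^\ast(\pm R)$) directly to $G_r(h)\ge G_r(h^\ast)$, with no need to first produce a minimizer by the direct method. Second, in your Poincar\'e estimate you restrict to $h\ge 0$; this is harmless since the negative part of $h$ only increases $G_r$, but you may want to say so explicitly (apply your inequality to $h^+$ and note $|(h^+)'|\le |h'|$ a.e.). In any case this does not affect the core of the proof.
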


\begin{proof}
See \cite[Lemma 4.29]{ACMBook}.
\end{proof}

\begin{lemma}
\label{le:closingisbest}
Let $R_c(S)$  be the minimal radius $r$ such that $\close{r}{S}$ is connected.
Then for $s\in [0,1], 0<\mu< \lambda$,
such that $R_c(S) \leq \frac{\mu}{s}$,
\begin{equation*}
\Fsl\lr{\close{\frac{\lambda}{s}}{S}}
\leq\Fsl\lr{\close{\frac{\mu}{s} }{S}}\,.
\end{equation*}
\end{lemma}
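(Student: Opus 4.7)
The plan is to view $C_\rho := \close{\rho}{S}$ as a one-parameter family for $\rho\in[R_c(S),\infty)$ and to show that
\[
\phi(\rho)\;:=\;P(C_\rho)+\frac{1}{R}\,\abs{C_\rho\setminus S},\qquad R:=\lambda/s,
\]
attains its minimum at $\rho=R$. Setting $r:=\mu/s$, both $C_r$ and $C_R$ contain $S$, so $\abs{C_\rho\cap S}$ is constant in $\rho$ and $\Fsl(C_\rho)=\phi(\rho)-\frac{1-s}{\lambda}\abs{S}$; the lemma is therefore equivalent to $\phi(R)\le \phi(r)$.

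The crucial observation is that $\partial C_\rho\setminus \overline S$ consists of circular arcs of radius $\rho$ with signed curvature $-\frac{1}{\rho}$ (with respect to the outward normal of $C_\rho$). This is exactly the Euler--Lagrange condition for the auxiliary functional
\[
\tilde\phi_\rho(X)\;:=\;P(X)+\frac{1}{\rho}\,\abs{X\setminus S}
\]
on the admissible class $\{X:S\subseteq X\}$; the complementary inequality on the remaining piece $\partial C_\rho\cap\partial S$ is automatic, since the disks are convex. Hence $C_\rho$ is a critical point of $\tilde\phi_\rho$. Applying the envelope identity to the smooth family $\rho\mapsto \tilde\phi_\rho(C_\rho)$, only the explicit dependence on $\rho$ contributes:
\[
\frac{d}{d\rho}\tilde\phi_\rho(C_\rho)\;=\;\partial_\rho\tilde\phi_\rho\Big|_{X=C_\rho}\;=\;-\frac{V(\rho)}{\rho^2},\qquad V(\rho):=\abs{C_\rho\setminus S}.
\]
Expanding the left-hand side as $P'(\rho)+\frac{1}{\rho}V'(\rho)-\frac{V(\rho)}{\rho^2}$ and cancelling yields the identity $P'(\rho)=-V'(\rho)/\rho$. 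Therefore
\[
\phi'(\rho)\;=\;P'(\rho)+\frac{V'(\rho)}{R}\;=\;V'(\rho)\left(\frac{1}{R}-\frac{1}{\rho}\right).
\]
Since the closings are strictly nested in $\rho$, $V$ is strictly increasing and $V'>0$, so $\phi'<0$ on $[R_c(S),R)$ and $\phi'>0$ on $(R,\infty)$. Hence $\phi(R)\le\phi(r)$, which is the desired inequality.

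The main obstacle is to justify the envelope identity rigorously in this geometric variational setting. One route is abstract: show that $\rho\mapsto C_\rho$ is differentiable in a suitable sense (which follows from the smooth dependence of the tangent-circle centres $c_\rho^\pm$ on $\rho$ via $\abs{c_\rho^\pm - O_i}=r_i+\rho$) and combine this with the first-variation identity for $\tilde\phi_\rho$ at its critical point $C_\rho$, taking care of the boundary terms produced by the tangent points sliding tangentially along $\partial S_i$. The more pedestrian alternative is to parametrize $C_\rho$ by the angles $\alpha_i(\rho)$ locating the tangent points on $\partial S_i$, write down the closed-form expressions $P(C_\rho)=2\pi r_1+2\alpha_2(\rho)(r_2+\rho)-2\alpha_1(\rho)(r_1+\rho)$ and a similar one for $V(\rho)$ (by Green's theorem), and verify the identity $P'(\rho)+V'(\rho)/\rho=0$ directly, checking that the contributions from the moving tangent points cancel the boundary terms in $P'(\rho)$.
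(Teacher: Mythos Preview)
Your approach is correct and genuinely different from the paper's. The paper writes the upper halves of $\partial\close{\lambda/s}{S}$ and $\partial\close{\mu/s}{S}$ as graphs $f,g$ over the axis through the centres, reduces the energy difference to the one-dimensional functional $G_R(h)=\int(\sqrt{1+(h')^2}-h/R)\,dx$ of Lemma~\ref{le:arcisbest}, and invokes that the circular arc of radius $R=\lambda/s$ is its minimizer among functions with the same endpoints. So the paper performs a single static comparison between the two given sets, while you differentiate along the whole one-parameter family $\rho\mapsto C_\rho$ and locate the global minimum of $\phi$.

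The envelope identity you flag as the main obstacle is justified by exactly the first-variation computation you sketch: since $\partial C_\rho$ is a closed $C^1$ curve (the exterior arcs meet $\partial S$ tangentially), the formula $P'(\rho)=\int_{\partial C_\rho}\kappa\,\zeta\,d\H^1$ carries no endpoint terms; the normal speed $\zeta$ vanishes on $\partial C_\rho\cap\partial S$ and the curvature equals $-1/\rho$ on the remaining arcs, giving $P'(\rho)=-\rho^{-1}\int_{\partial C_\rho\setminus\overline S}\zeta\,d\H^1=-V'(\rho)/\rho$ directly. Your differential route in fact yields the slightly stronger conclusion that $\phi$ is strictly decreasing on $(R_c(S),R)$ and strictly increasing on $(R,\infty)$, which is precisely what Lemma~\ref{le:FSL}\,(iii) later extracts from the present lemma; the paper's static comparison, by contrast, is more self-contained and sidesteps any regularity discussion of $\rho\mapsto C_\rho$ near $\rho=R_c(S)$.
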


\begin{proof}
Let us take the $x$-axis as the axis joining the centers of the two circles.
Then $S$ is symmetric with respect to the $x$-axis.
Then the upper parts of
$\partial \lr{\close{\frac{\lambda}{s}}{S}}$ and
$\partial \lr{\close{\frac{ \mu}{s}}{S}}$
are representable as functions $f,g:[a,b]\rightarrow \R$, such that
\begin{align*}
\frac{1}{2}
\Fsl\lr{\close{\frac{\lambda}{s}}{S}}&=
\int_{[a,b]}\sqrt{1+(f')^2} +\frac{s}{\lambda}\lr{ \int_{[a,b]} f-
\frac{1}{2}\abs{S}} \\
\frac{1}{2}
\Fsl\lr{\close{\frac{ \mu}{s}}{S}}&=
\int_{[a,b]}\sqrt{1+(g')^2} +\frac{s}{\lambda}\lr{ \int_{[a,b]} g-
\frac{1}{2}\abs{S}}
\end{align*}

Let $P,Q$ be the points in the positive $y$-plane where
$\partial \close{\frac{\lambda}{s}}{S}$ intersects with $S_1$ and $S_2$.
Define $h:=[a',b']\rightarrow \R$ as the affine function from $P=(a',f(a'))$ to
$Q=(b',f(b'))$ (see Figure
\ref{fig:geomconf}).
Set $\tilde f:=[a',b']\rightarrow \R, \tilde f=h-f$ and
$\tilde g:=[a',b']\rightarrow \R, \tilde g=h-g$, then
$\tilde g(a')=\tilde f(a')=0$,
$\tilde g(b')=\tilde f(b')=0$,
${g'}^2 =({\tilde g}')^2$ and ${f'}^2 =({\tilde f}')^2$.
Note that $\tilde f$ is an arc of circle with radius $\frac{\lambda}{s}$.

\begin{figure}
\begin{center}
\includegraphics[width=14cm]{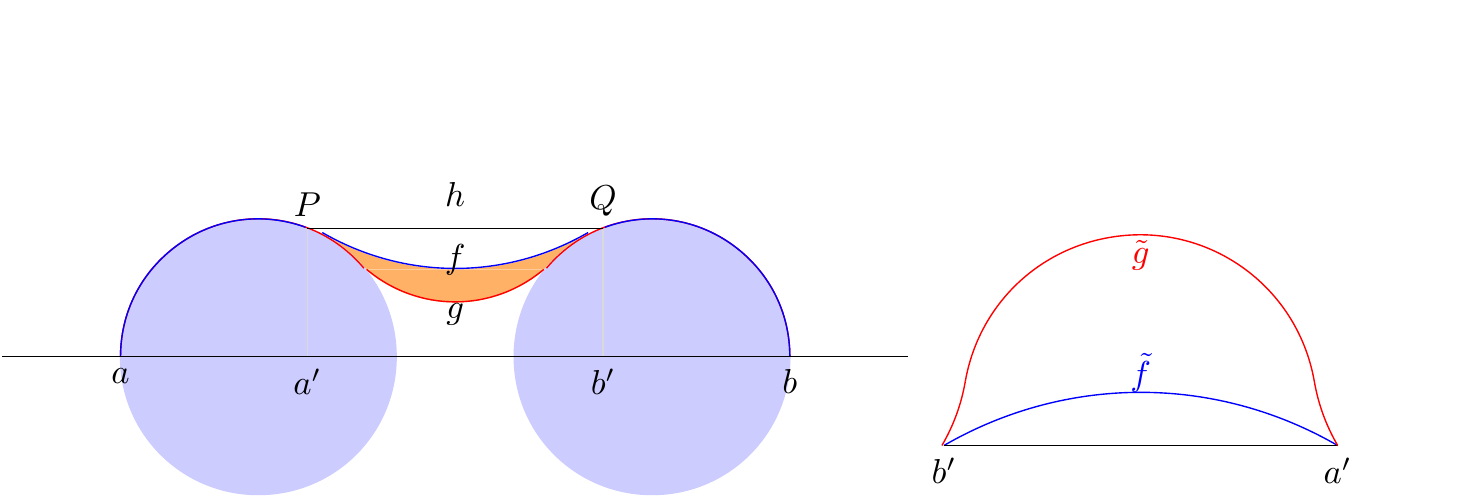}
\end{center}
\caption{The geometric configuration of the proof of Lemma
\ref{le:closingisbest}.
}\label{fig:geomconf}
\end{figure}

Using functional $G_r$ as in \eqref{eq:Gr}  with $r=\frac{\lambda}{s}$,
replacing the domain of integration
$[-R,R]$  by $[a',b']$,
and Lemma \ref{le:arcisbest} (see also  \cite[Lemma 4.29]{ACMBook}) we have
$G_{\frac{\lambda}{s}}(\tilde f) \leq G_{\frac{\lambda}{s}}(\tilde g)$.
Hence
\begin{align*}
\frac{1}{2}
\Fsl\lr{\close{\frac{\lambda}{s}}{S}} -
\frac{1}{2}
\Fsl\lr{\close{\frac{\mu}{s}}{S}} =
G_{\frac{\lambda}{s}}\lr{\tilde f} - G_{\frac{\lambda}{s}}\lr{\tilde g}\leq 0.
\end{align*}
\end{proof}

Next we show some properties of the function
$(s,\lambda)\rightarrow \Fsl\lr{\close{\raussen}{S}}$.

\begin{lemma} \label{le:FSL}
The function $(s,\lambda)\rightarrow \Fsl\lr{\close{\raussen}{S}}$
satisfies the following properties:
\begin{enumerate}[(i)]
\item \label{it:FSL2}
Let $\kappa>0$ and $s_\kappa(\lambda):=1-\frac{\lambda}{\kappa}$, 
$\lambda>0$.
Then the mapping $\lambda \rightarrow
\mathcal{F}_{s_\kappa(\lambda),\lambda}\lr{
\close{\frac{\lambda}{s_\kappa(\lambda)}}{S}}$
is strictly decreasing and continuous as long as
$\frac{\lambda}{s_\kappa(\lambda)}\geq R_c(S)$, i.e.,
as long as the set $\close{\frac{\lambda}{s_\kappa(\lambda)}}{S}$ is connected.

\item  \label{it:FSL3} For $\lambda>0$, the mapping
$s\rightarrow \Fsl\lr{\close{\raussen}{S}}$ is  continuous  and  strictly
increasing for $s \in [0,\frac{\lambda}{R_c(S)}]$, i.e.
as long as the set $\close{\frac{\lambda}{s}}{S}$ is connected.

\item  \label{it:FSL3b} For $r \in  [R_c(S), \infty]$ the functions
 \begin{align*}
 r &\rightarrow P(\close{r}{S}) +\frac{1}{r}\abs{\close{r}{S}}\\
 r&\rightarrow P(\close{r}{S})+\frac{1}{r}\abs{\close{r}{S}\setminus S}
 \end{align*}
are continuous and strictly decreasing in $r$.
\end{enumerate}
\end{lemma}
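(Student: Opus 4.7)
The plan is to reduce (i) and (ii) to (iii), and to prove (iii) through a variational characterization of the closing operator. Since $\close{r}{S}\supseteq S$, we have $\abs{\close{r}{S}\cap S}=\abs{S}$, so that
\[
\Fsl(\close{\raussen}{S}) \;=\; H(r) + \tfrac{s-1}{\lambda}\abs{S}, \qquad r = \tfrac{\lambda}{s},
\]
where $H(r):=P(\close{r}{S})+\tfrac{1}{r}\abs{\close{r}{S}\setminus S}$ is the first function appearing in (iii). For (ii), with $\lambda$ fixed, $s\mapsto\lambda/s$ is strictly decreasing, so by (iii) $s\mapsto H(\lambda/s)$ is strictly increasing and continuous on $[0,\lambda/R_c(S)]$; the affine term $(s-1)\abs{S}/\lambda$ is also strictly increasing, giving the claim. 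For (i), with $\kappa$ fixed and $s_\kappa(\lambda)=1-\lambda/\kappa$, one has $(1-s_\kappa(\lambda))/\lambda=1/\kappa$, hence
\[
\Fcal_{s_\kappa(\lambda),\lambda}\bigl(\close{\lambda/s_\kappa(\lambda)}{S}\bigr) \;=\; H(r_\kappa) - \tfrac{\abs{S}}{\kappa}, \qquad r_\kappa := \tfrac{\lambda\kappa}{\kappa-\lambda};
\]
since $r_\kappa$ is strictly increasing in $\lambda$ and $H$ strictly decreasing in $r$, the left-hand side is strictly decreasing and continuous in $\lambda$ on the range where $r_\kappa\ge R_c(S)$.

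For (iii), the heart of the matter is the claim that $\close{r}{S}$ minimizes the obstacle-type functional
\[
F_r(X) := P(X) + \tfrac{1}{r}\abs{X\setminus S} \qquad\text{over measurable }X\supseteq S.
\]
Indeed, the first variation on $\partial X\setminus\partial S$ forces the outward curvature to equal $-1/r$, and together with the obstacle constraint this characterizes $\close{r}{S}$ for the two-ball geometry: its outside boundary consists of two circular arcs of radius $r$ externally tangent to $\partial S_1$ and $\partial S_2$. Granting this, the standard cross-minimality argument, comparing $\close{r}{S}$ and $\close{r'}{S}$ as mutual admissible competitors for $F_r$ and $F_{r'}$ with $R_c(S)\le r<r'$, yields
\[
\bigl(\tfrac{1}{r'}-\tfrac{1}{r}\bigr)\abs{\close{r'}{S}\setminus S} \;\le\; H(r')-H(r) \;\le\; \bigl(\tfrac{1}{r'}-\tfrac{1}{r}\bigr)\abs{\close{r}{S}\setminus S}.
\]
Both bounds tend to zero as $r'\to r$, giving continuity, and both are strictly negative whenever $r>R_c(S)$, since $\close{r}{S}\supsetneq S$ in that regime; hence $H$ is strictly decreasing on $(R_c(S),\infty)$, and the endpoint $r=R_c(S)$ follows by continuity combined with the strict monotonicity to its right. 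The second function in (iii) equals $H(r)+\abs{S}/r$ via $\abs{\close{r}{S}}=\abs{S}+\abs{\close{r}{S}\setminus S}$, and thus inherits continuity and strict decrease.

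The main obstacle is justifying rigorously that $\close{r}{S}$ minimizes $F_r$ among $X\supseteq S$. One route is to combine the Euler-Lagrange condition with the explicit geometric description of $\close{r}{S}$ (the external arcs are uniquely determined by the triangle with sides $|c_1-c_2|$, $r_1+r$, $r_2+r$); another is a limit argument from $\Fsl$ along $s=\lambda/r$ and $\lambda\to0^+$, where $(1-s)/\lambda\to\infty$ forces the constraint $X\supseteq S$ and Proposition \ref{lemma:gg} identifies the limit minimizer as the closing. In the two-ball setting one can even bypass the variational argument entirely by computing $H(r)$ explicitly from the same triangle and verifying $H'(r)<0$ by direct differentiation.
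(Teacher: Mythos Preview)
Your reduction of (i) and (ii) to (iii) is clean and correct (modulo a harmless labeling slip: your $H$ is the \emph{second} function listed in (iii), and the \emph{first} function equals $H(r)+|S|/r$, not the other way around). The paper does not separate the three items so explicitly; instead it proves each by direct comparison, but the underlying computation is the same as yours.

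Where your route genuinely differs is in the proof of (iii). You formulate the strong variational claim that $\close{r}{S}$ minimizes $F_r(X)=P(X)+\tfrac{1}{r}|X\setminus S|$ over \emph{all} $X\supseteq S$, and then run a cross-minimality argument. The paper avoids this obstacle problem entirely: it proves only the much weaker statement that $\close{r}{S}$ beats every other \emph{closing} $\close{r'}{S}$ in the functional $F_r$ (Lemma~\ref{le:closingisbest}), and this is established by the elementary one-dimensional fact that a circular arc of radius $r$ minimizes $\int\sqrt{1+h'^2}-\tfrac{1}{r}\int h$ among graphs with the same endpoints (Lemma~\ref{le:arcisbest}). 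Observe that your cross-minimality argument only ever tests the two competitors $\close{r}{S}$ and $\close{r'}{S}$ against each other, so the weaker comparison-among-closings is already sufficient for your bounds
\[
\bigl(\tfrac{1}{r'}-\tfrac{1}{r}\bigr)\abs{\close{r'}{S}\setminus S}\;\le\;H(r')-H(r)\;\le\;\bigl(\tfrac{1}{r'}-\tfrac{1}{r}\bigr)\abs{\close{r}{S}\setminus S}.
\]
In short: your argument is correct, but you are proving (or at least asserting) more than you need. Replacing your obstacle-problem claim by the paper's Lemma~\ref{le:closingisbest} closes the gap you flag at the end with a two-line computation, and none of the three ``routes'' you sketch is actually required.
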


\begin{proof}
\begin{enumerate}
 \item[(i)] 
Let $\lambda_1 < \lambda_2$ and set $s_i:=1-\frac{\lambda_i }{\kappa}, i=1,2$.
Then $s_2 < s_1$ and $\frac{\lambda_1}{s_1}<\frac{\lambda_2}{s_2}$.
Assume that $\frac{\lambda_1}{s_1} \geq R_c(S)$. By Lemma \ref{le:closingisbest}
we have that
\begin{align*}
\Fcal_{s_2,\lambda_2}\lr{\close{ \frac{\lambda_2}{s_2} }{S}}
&\leq \Fcal_{s_2,\lambda_2}\lr{\close{ \frac{\lambda_1}{s_1} }{S}}\\
&= P\lr{   \close{ \frac{\lambda_1}{s_1} }{S}  } - \frac{1}{\kappa} \abs{\close{
\frac{\lambda_1}{s_1} }{S} \cap S}
+\frac{s_2}{\lambda_2} \abs{\close{ \frac{\lambda_1}{s_1} }{S} \setminus S} \\
&<\Fcal_{s_1,\lambda_1}\lr{\close{ \frac{\lambda_1}{s_1} }{S}}\;.
\end{align*}
Hence the mapping  $\lambda\rightarrow
\mathcal{F}_{s_\kappa(\lambda),\lambda}\lr{
\close{\frac{\lambda}{s_\kappa(\lambda)} }{S}}$ is strictly decreasing
as long as $\frac{\lambda}{s_\kappa(\lambda)}\geq R_c(S)$.
The continuity follows from the continuity of the involved functions.

\item[(ii)--(iii)] Let $\lambda > 0$. The continuity of the function $s\in [0,1]
\rightarrow \Fsl\lr{\close{\raussen}{S}}$
follows from the continuity of the involved functions.
Assume $0\leq s_1<s_2\leq 1$ and $\lambda$ such that
$\close{\frac{\lambda}{s_1}}{S},\close{\frac{\lambda}{s_2}}{S}$ are connected.
Since $R_c(S) \leq \frac{\lambda}{s_2} < \frac{\lambda}{s_1}$,
Lemma \ref{le:closingisbest} gives
\begin{equation}\label{kiko1}
\Fcal_{s_1,\lambda}\lr{\close{ \frac{\lambda}{s_1} }{S}} \leq
\Fcal_{s_1,\lambda}\lr{\close{ \frac{\lambda}{s_2} }{S}},
\end{equation}
hence
\begin{eqnarray*}
\Fcal_{s_1,\lambda}\lr{\close{ \frac{\lambda}{s_1} }{S}} &= & P\lr{\close{
\frac{\lambda}{s_1} }{S}}
+ \frac{s_1}{\lambda}\abs{\close{ \frac{\lambda}{s_1} }{S}}
- \frac{1}{\lambda}\abs{S} \\
& \leq &
P\lr{\close{ \frac{\lambda}{s_2} }{S}} + \frac{s_1}{\lambda}\abs{\close{
\frac{\lambda}{s_2} }{S}}
- \frac{1}{\lambda}\abs{S} \\
& < & P\lr{\close{ \frac{\lambda}{s_2} }{S}}
+ \frac{s_2}{\lambda}\abs{\close{ \frac{\lambda}{s_2} }{S}}  -
\frac{1}{\lambda}\abs{S}
\\
&=& \Fcal_{s_2,\lambda}\lr{\close{ \frac{\lambda}{s_2} }{S}}.
\end{eqnarray*}
This proves
that the mapping $s\rightarrow \Fsl\lr{\close{\raussen}{S}}$ is strictly
increasing on the interval
$\left[ 0,\min\set{ 1,\frac{\lambda}{R_c(S)} } \right]$.
Adding $\frac{1}{\lambda}\abs{S}$ to both sides of the inequality
and setting $r_1:=\frac{\lambda}{s_1}, r_2:=\frac{\lambda}{s_2}$
we have that $0 < r_2 < r_1$ and
$$P\lr{\close{r_1}{S}}+\frac{1}{r_1}\abs{\close{r_1}{S}}
< P\lr{\close{r_2}{S}}+\frac{1}{r_2}\abs{\close{r_2}{S}}.$$
Hence $P\lr{\close{r}{S}}+\frac{1}{r}\abs{\close{r}{S}}$ is strictly decreasing
in $r$ for $r \in  [R_c(S), \infty]$
if $R_c(S) > 0$, or in $(R_c(S), \infty]$ if $R_c(S) = 0$. We understand that
the function is $+\infty$ when $r=R_c(S) = 0$.

Writing again \eqref{kiko1} as
\begin{eqnarray*}
& & P\lr{\close{ \frac{\lambda}{s_1} }{S}}
+ \frac{s_1}{\lambda}\abs{\close{ \frac{\lambda}{s_1} }{S} \setminus S}
- \frac{1-s_1}{\lambda}\abs{S} \\
& \leq &
P\lr{\close{ \frac{\lambda}{s_2} }{S}} + \frac{s_1}{\lambda}\abs{\close{
\frac{\lambda}{s_2} }{S}\setminus S}
- \frac{1-s_1}{\lambda}\abs{S} \\
& < & P\lr{\close{ \frac{\lambda}{s_2} }{S}} + \frac{s_2}{\lambda}\abs{\close{
\frac{\lambda}{s_2} }{S}\setminus S}
- \frac{1-s_1}{\lambda}\abs{S},
\end{eqnarray*}
setting $r_1:=\frac{\lambda}{s_1}, r_2:=\frac{\lambda}{s_2}$ (notice that these
values are not $0$),
and adding $\frac{1-s_1}{\lambda}\abs{S}$ to both sides of the inequality above,
 we get that
$P\lr{\close{r}{S}}+\frac{1}{r}\abs{\close{r}{S}\setminus S}$ is strictly 
decreasing in $r$ for
$r \in  [R_c(S), \infty]$ if $R_c(S) > 0$, or in $(R_c(S), \infty]$ if $R_c(S) =
0$. Notice that the function is also continuous in that range.

It remains to consider the case $R_c(S) = 0$
and to prove that $$P\lr{\close{r}{S}}+\frac{1}{r}\abs{\close{r}{S}\setminus
S}$$ is continuous at $r=0$.
This follows if we prove that
\begin{equation}\label{TS3}
\frac{1}{r}\abs{\close{r}{S}\setminus S}\to 0+
 \qquad \hbox{\rm as $r\to 0+$.}
\end{equation}
To prove \eqref{TS3}, let us estimate the area of $\close{r}{S}\setminus S$.
This set is contained in a triangle
whose basis has length $\leq 2r$ and whose height is less than  $\sqrt{r}
\max(\sqrt{2R_1-r},\sqrt{2R_2-r}) = O(\sqrt{r})$.
Hence $\abs{\close{r}{S}\setminus S}=O(r^{3/2})$ and \eqref{TS3} holds.
\end{enumerate}
\end{proof}

\begin{lemma}\label{le:noTransversalSet}
Assume that \eqref{contextenodestruccio} holds and that
 $\frac{\abs{S_1}}{P(S_1)}<\frac{\abs{S}}{P(co(S))}$. Then
 $\Cs \cap S_2 \in \set{\emptyset,S_2}$, i.e., 
 $\Cs$ cannot be a transversal set.
\end{lemma}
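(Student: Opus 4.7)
The plan is to argue by contradiction: suppose $\Cs$ coincides with a transversal set $T$, and derive a contradiction with the hypothesis $\frac{|S_1|}{P(S_1)} < \frac{|S|}{P(co(S))}$.

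By Proposition \ref{lemma:gg} item \ref{it:gg2}, $T$ satisfies $S_1 \subset T$; the arc $\partial T \cap S_2$ is a single circular arc of radius $R := \lambda/(1-s)$; and $\partial T \setminus \overline{S}$ consists of two circular arcs of radius $\lambda/s$, tangent to $\partial S_1$ and $\partial S_2$. By Lemma \ref{le:transversalSets} item \ref{it:l4}, $T$ must be of decreasing type, so $R \ge r_2$. The first geometric observation I would use is that
\[
T \cup S_2 = \close{\lambda/s}{S}.
\]
Indeed, both sets contain $S$, and their boundaries outside $\overline{S}$ are the unique pair of arcs of radius $\lambda/s$ simultaneously tangent to $\partial S_1$ and $\partial S_2$ (one above and one below the axis through the centers); hence the two sets agree.

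Next I would set up the energy comparison. Writing $\ell = \H^1(\partial T \cap S_2)$ and $L = \H^1(\partial S_2 \setminus T)$, a direct bookkeeping of boundary (arc $\ell$ removed, arc $L$ added) and added area (all contained in $S$) yields
\[
\Fsl\bigl(\close{\lambda/s}{S}\bigr) - \Fsl(T) \;=\; (L - \ell) \;-\; \frac{1-s}{\lambda}\,|S_2 \setminus T|.
\]
Minimality of $T$ against the competitor $\close{\lambda/s}{S}$ forces this quantity to be $\ge 0$.

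The decisive step is to show that, under the hypothesis, the opposite strict inequality holds, giving the contradiction. By Proposition \ref{pro:gnorm}, the hypothesis means that $\|\chi_S\|_\ast$ is attained on $co(S)$ rather than on $S_1$, i.e., closing-type configurations dominate in the nontrivial regime of $\lambda$. Using the strict monotonicity of $r\mapsto P(\close{r}{S})+\frac{1}{r}|\close{r}{S}\setminus S|$ given by Lemma \ref{le:FSL} item \ref{it:FSL3b}, I would propagate this dominance down to $r=\lambda/s$, obtaining $(L-\ell) < \frac{1-s}{\lambda}|S_2\setminus T|$ throughout the admissible range $R\ge r_2$.

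The hard part is this last step: translating the global ratio inequality on $S$ into the local chord--arc inequality inside $S_2$. The plan is to parametrize the chord by $R$ (its endpoints being determined by the outer-arc tangency conditions), view the difference $(L-\ell)-\frac{1-s}{\lambda}|S_2\setminus T|$ as a function of $R\in[r_2,\infty)$, and show that its sign is negative on this whole range under the hypothesis. The boundary case $R=r_2$ gives equality ($\ell=L$ and $|S_2\setminus T|=0$), and the hypothesis precisely forbids a sign change of the derivative as $R$ grows, so the sign stays negative. This contradicts the minimality of $T$ and completes the proof.
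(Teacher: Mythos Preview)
Your proposal contains a genuine geometric error that breaks the argument. You claim that $T\cup S_2 = \close{\lambda/s}{S}$ on the grounds that both sets have outer boundary arcs of radius $\lambda/s$ tangent to $\partial S_1$ and $\partial S_2$. But the outer arcs of a transversal set $T$ are \emph{not} tangent to $\partial S_2$: since $\partial T$ is $C^{1,1}$, each outer arc meets the inner arc (of radius $R=\lambda/(1-s)$) tangentially at a point $p\in\partial S_2$, and the inner arc crosses $\partial S_2$ transversally at $p$ (see the figures of transversal sets in the paper). Hence the outer arcs of $T$ and the outer arcs of $\close{\lambda/s}{S}$ are distinct arcs, and $T\cup S_2\neq\close{\lambda/s}{S}$. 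Your energy identity is in fact the formula for $\Fsl(T\cup S_2)-\Fsl(T)$, not for $\Fsl(\close{\lambda/s}{S})-\Fsl(T)$; once the competitor is $T\cup S_2$ rather than a closure, the monotonicity of Lemma~\ref{le:FSL}\,\ref{it:FSL3b} no longer applies, and the link you need between the global hypothesis and the local chord--arc inequality disappears.

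Even setting that aside, your Step~5 is only a plan, not a proof: you assert that the hypothesis ``precisely forbids a sign change of the derivative'' of $(L-\ell)-\frac{1-s}{\lambda}|S_2\setminus T|$ as $R$ varies, but you give no computation, and it is not clear what quantity you are differentiating (the outer arcs and their contact points with $\partial S_2$ move with $R$ as well). The paper's argument avoids all of this. It first shows, using Proposition~\ref{pro:gnorm} and Lemma~\ref{le:arcisbest}, that the hypothesis $\frac{|S_1|}{P(S_1)}<\frac{|S|}{P(\mathrm{co}(S))}$ forces $\lambda_3:=\frac{|S|}{P(\mathrm{co}(S))}\le r_2$. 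It then proves that any nonempty minimizer $C_{s,\lambda}$ must satisfy $s\le 1-\lambda/\lambda_3$, i.e.\ $\frac{\lambda}{1-s}\le\lambda_3\le r_2$, by checking that $\emptyset$ is optimal along the line $s=1-\lambda/\lambda_3$ (via Lemma~\ref{le:FSL}\,\ref{it:FSL2}). Since transversal minimizers require $\frac{\lambda}{1-s}\ge r_2$ (your own first step, via Lemma~\ref{le:transversalSets}), this rules them out. The key idea you are missing is the inequality $\lambda_3\le r_2$, which is exactly where the hypothesis enters.
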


\begin{proof}
From \eqref{eqmax} and Proposition \ref{pro:gnorm} we obtain that 
\begin{equation}\label{l3}
\lambda_3:=\frac{\abs{S}}{P(co(S))}
= \max_{X \subset \R^2}\rho(X)\,.
\end{equation}

We claim that $\lambda_3\le r_2$. Indeed, if $\lambda_3>r_2$
then by Lemma \ref{le:arcisbest} we have that the set $co(S)$
cannot be a minimizer of $\Fcal_{0,\lambda_3}$, that is,
there exists $X\subset\R^2$ such that 
$$
\Fcal_{0,\lambda_3}(X)<\Fcal_{0,\lambda_3}(co(S))=0\,.
$$
It then follows $\rho(X) > \lambda_3$, contradicting \eqref{l3}.
We thus proved that $\lambda_3\le r_2$.

We now claim that, for $s>1-\frac{\lambda}{\lambda_3}$
we necessarily have $C_{s,\lambda}=\emptyset$. 
Indeed, it is enough to show that the empty set is a minimizer 
of $\Fsl$ for $s=1-\frac{\lambda}{\lambda_3}$, that is,
$\Fsl(C_{s,\lambda})=0$. Since 
$\frac{1-s}{\lambda}=\frac{1}{\lambda_3}\ge\frac{1}{r_2}$,
from Proposition \ref{lemma:gg} it follows that 
$C_{s,\lambda}\in 
\left\{S_1,S,\emptyset,\close{\frac{s}{\lambda}}{S}\right\}$.
From our assumptions  it directly follows that 
$\Fsl(S_1)>0$ and $\Fsl(S)>0$, hence it remains to show that 
$\Fsl(\close{\frac{s}{\lambda}}{S})\ge 0$.
By Lemma \ref{le:FSL} (i) we know that 
$$
\Fsl(\close{\frac{s}{\lambda}}{S})=
\Fcal_{1-\frac{\lambda}{\lambda_3},\lambda}
(Close_{\frac{1}{\lambda}-\frac{1}{\lambda_3}}(S))
\ge \Fcal_{0,\lambda_3}(co(S))=0\,,
$$
which proves our claim.

In particular, if $C_{s,\lambda}\ne\emptyset$ it follows that 
$$
s\le 1-\frac{\lambda}{\lambda_3}\le 1-\frac{\lambda}{r_2}\,,
$$
i.e., $\frac{1-s}{\lambda}\ge\frac{1}{r_2}$,
hence $C_{s,\lambda}$ cannot be a transversal set (again by 
Proposition \ref{lemma:gg} \ref{it:gg2}).
\end{proof}

\smallskip

{\it Proof of Proposition \ref{de:lambda}}.\\

\begin{enumerate}[(a)]
\item  Assume first that  $R_c(S) > 0$. In that case, because of the convexity
of $S_1$, $S_2$, $P(S) < P(\close{R_c(S)}{S})$, and we have
\begin{equation*} P(S)<  P\lr{\close{R_c(S)}{S}}+\frac{1}{R_c(S)}
 \abs{ \close{ R_c(S)}{S}\setminus S}.
 \end{equation*}
 On the other hand
 \begin{equation}\label{TS1}
\lim_{r\rightarrow \infty}  P(\close{r}{S})
+\frac{1}{r}\abs{\close{r}{S}\setminus S} =P(co(S)) < P(S)\;.
\end{equation}
Hence, by Lemma \ref{le:FSL}.(iii), \eqref{eq:R1} has a unique solution in
$R_1\in (R_c(S),\infty)$.

Assume now that  $R_c(S) = 0$. Then we have $\close{R_c(S)}{S} = S$, moreover,
since
$f(r):= P(\close{r}{S}) +\frac{1}{r}\abs{\close{r}{S}\setminus S}$
is a continuous and decreasing function in $[R_c(S),\infty)$ by Lemma
\ref{le:FSL}.(iii), we have
\begin{equation*}
 \lim_{r\to 0+}P(\close{r}{S}) +\frac{1}{r}\abs{\close{r}{S}\setminus S} \geq
\lim_{r\to 0+}P(\close{r}{S}) = P(S).
\end{equation*}
On the other hand, we also have \eqref{TS1}. Thus, 
$R_1=0 \in [0,\infty)$ satisfies \eqref{eq:R1}. This value is unique since 
$r\rightarrow P(\close{r}{S})+ \frac{1}{r}\abs{\close{r}{S}\setminus S}$ is
strictly decreasing.

To prove \eqref{eq:lambda1Circles}, let us observe that
$$
\Fcal_{s,\lambda}\lr{\close{\frac{\lambda}{s}}{S}} - \Fcal_{s,\lambda}\lr{S}=
P\lr{\close{ \frac{\lambda}{s} }{S}} + \frac{s}{\lambda}\abs{\close{
\frac{\lambda}{s} }{S}\setminus S}
- P(S).
$$
Setting $s=s_2(\lambda)$ and $r=\frac{\lambda}{s_2(\lambda)}$ in the above
equality and observing that
$\frac{\lambda}{s_2(\lambda)}$ is an increasing function of $\lambda$ we have
that
$$
\Fcal_{s_2(\lambda),\lambda}\lr{\close{\frac{\lambda}{s_2(\lambda)}}{S}} -
\Fcal_{s_2(\lambda),\lambda}\lr{S}
$$
is $> 0$  (resp. $=0$, $<0$) if and only if $\lambda < \lambda_1$ (resp.
$\lambda = \lambda_1$, $\lambda > \lambda_1$).

Notice that $\lambda_1= 0$ if and only if $R_1=0$ and we have proved that this
happens if and only if $R_c(S)=0$.

\item 
\begin{enumerate}[(i)]
\item  We are assuming that $P(co(S)) < \frac{P(S_1)}{|S_1|}|S|$.
Let us first assume that the radius of $S_1$ is $>$ than the radius of $S_2$,
hence
$\frac{P(S)}{|S|} > \frac{P(S_1)}{|S_1|}$. If $R_c(S) > 0$, then
\begin{equation*}
\begin{array}{ll}
P\lr{\close{r}{S}}+ \frac{1}{r}\abs{\close{r}{S}\setminus S}\to
P\lr{\close{R_c(S)}{S}}+ \frac{1}{R_c(S)}\abs{\close{R_c(S)}{S}\setminus S} \\
\\
\qquad \qquad \qquad \qquad \qquad \qquad \qquad \qquad  > P(S) >
\frac{P(S_1)}{|S_1|}|S| \qquad \hbox{\rm as $r\to R_c(S)$}
\end{array}
\end{equation*}
and
\begin{equation*}
P\lr{\close{r}{S}}+ \frac{1}{r}\abs{\close{r}{S}\setminus S}\to P(S) >
\frac{P(S_1)}{|S_1|}|S|
 \qquad \hbox{\rm as $r\to 0+$}
\end{equation*}
in case that $R_c(S) = 0$.
On the other hand
\begin{equation*}
P\lr{\close{r}{S}}+ \frac{1}{r}\abs{\close{r}{S}\setminus S}\to P(co(S)) <
\frac{P(S_1)}{|S_1|}|S|.
 \qquad \hbox{\rm as $r\to \infty$}\;.
\end{equation*}
Thus there is a unique value $R_2 \in (R_c(S),\infty)$ satisfying 
\eqref{eq:R2}.

Now, if  $\frac{P(S)}{|S|} = \frac{P(S_1)}{|S_1|}$, then
both equations \eqref{eq:R1} and \eqref{eq:R2} are the same and we can take
$R_2=R_1$.
Hence $\lambda_2=\lambda_1$.
Clearly, if $R_2=R_1$, then $\frac{P(S)}{|S|} = \frac{P(S_1)}{|S_1|}$.
Notice that if  $\lambda_1 = \lambda_2$, then
$$
0 \geq (R_1-R_2) |S_1||S_2| = R_1R_2 (|S_1|P(S_2)-|S_2|P(S_1))\geq 0.
$$
Thus $R_1=R_2$. Note that if
$\frac{P(S)}{|S|} = \frac{P(S_1)}{|S_1|}$ and $R_c(S) = 0$,  by $(i)$, $R_2 =
R_1=0$ and $\lambda_2=\lambda_1=0$.

To prove \eqref{eq:lambda2Circles} we proceed as in the proof of $(i)$. The fact
that $\lambda_2\geq \lambda_1$
follows since $\frac{P(S_1)}{|S_1|}\leq \frac{P(S)}{|S|}$. From the explicit
formula for $\lambda_2$, it follows that
$\lambda_2 \leq \frac{P(S_1)}{|S_1|}$.

\item 
 For $\lambda \in \left(\fraczwei,\fraceins\right)$ the only possible minimizers
for $\Fcal_{0,\lambda}$ are $S_1$ and $\Gamma_{0,\lambda}$. For
$\lambda = \fraczwei$, $\Gamma_{0,\lambda}$ is a minimizer.

Proposition \ref{pro:gnorm} states that for 
$\lambda>\fraceins$, $\emptyset$ is the only possible minimizer. 
Since $\lambda \rightarrow \min_X \Fcal_{0,\lambda}\lr{X}$ is continuous, 
there has to be $\lambda=\lambda_2 \in \lr{\fraczwei,\fraceins}$ such that 
$\Fcal_{0,\lambda_2}\lr{S_1} =
\Fcal_{0,\lambda_2}\lr{\Gamma_{0,\lambda_2}\lr{S}}$. 
Rearrangement of this equation gives
\begin{equation*}
\lambda_2 = \frac{\abs{\Gamma_{0,\lambda_2}\lr{S}\cap S_2}}{
P\lr{\Gamma_{0,\lambda_2}\lr{S}}-P\lr{S_1}}\;.
\end{equation*}

\end{enumerate}
\end{enumerate}

\qed

\smallskip

\section{Proof of Theorem \ref{pr:explicitCircles}}\label{sect:proof}
We consider the three different intervals of $\lambda$. For each of them we
compute $C_{s,\lambda}$ for $s\in [0,1]$.

\begin{enumerate}[(a)]
\item  $\lambda\in [0,\lambda_1]$. In this case $\lambda<r_2$ such that
$\Gamma_{s,\lambda}\lr{S} = \close{\raussen}{S}$.

\begin{enumerate}[{(a}1{)}]
\item 
Let us prove that there is a function $s_a(\lambda)$, $\lambda\in
[0,\lambda_1]$, such that
\begin{equation}\label{eq:condA}
\Fcal_{s,\lambda}\lr{\close{\frac{\lambda}{s}}{S}} <
\Fcal_{s,\lambda} \lr{S}\; \qquad \hbox{\rm (respectively $=$, $>$)}
\end{equation}
if and only if $s \in [0,s_a(\lambda))$, resp. $s=s_a(\lambda)$, $s >
s_a(\lambda)$.
Notice that $\Fcal_{s,\lambda}\lr{\close{\frac{\lambda}{s}}{S}} \leq
\Fcal_{s,\lambda} \lr{S}$
if and only if
\begin{equation}\label{eq:garrafa1x}
P\lr{\close{\frac{\lambda}{s}}{S}} +\frac{s }{\lambda}
\abs{\close{\frac{\lambda}{s }}{S} \setminus S} \leq P(S)\;.
\end{equation}
Clearly, by Proposition \ref{de:lambda} \ref{it:lambdaa}, if we define
\begin{equation*}
s_a(\lambda)=\frac{1}{R_1}\lambda \quad \lambda \in [0,  \lambda_1]\;,
\end{equation*}
then the equality in (\ref{eq:garrafa1x}) holds identically.
Now, by Lemma \ref{le:FSL} \ref{it:FSL3b}, the left hand side of
(\ref{eq:garrafa1x}) is an
increasing function of $s$,
and the identity in (\ref{eq:garrafa1x}) only holds at $s=s_a(\lambda)$. Thus
(\ref{eq:condA}) holds.

Remark that (\ref{eq:condA}) holds for any value of $\lambda$.

\item 
{\it Identification of $C_{s,\lambda}$.}
Recall that, by Lemma \ref{le:comparScircle}, for any $s\in \left[0,1-\lambda 
\fraczweii\right]$ we have
$$
\min\set{
\Fcal_{s,\lambda} \lr{S},
\Fcal_{s,\lambda} \lr{S_1},
\Fcal_{s,\lambda} \lr{S_2},
0
}=\Fcal_{s,\lambda} \lr{S}.
$$
Thus
$C_{s,\lambda}= \close{\frac{\lambda}{s}}{S}$ if $s \in [0,s_a(\lambda)]$,
$C_{s,\lambda}=S$ if $s\in \left(s_a(\lambda),1-\lambda  \fraczweii \right]$.
Notice that if $s=s_a(\lambda)$, $S$ is also a minimizer of
$\Fcal_{s,\lambda}$.

Using (\ref{eq:condA}) and Lemma \ref{le:comparScircle} we clearly have that
$C_{s,\lambda}=S_1$ if $s\in \left(1-\lambda  \fraczweii,1-\lambda\fraceinsi
\right]$ and
$C_{s,\lambda}=\emptyset$ if $s > 1-\lambda\fraceinsi$.

\end{enumerate}
\item  Let $\lambda \in (\lambda_1,\lambda_2]$.
In this case,  let us prove that there is a function $s_b(\lambda)$ such that
\begin{equation}\label{eq:S1}
\Fcal_{s_b(\lambda),\lambda}\lr{\Gamma_{s_b(\lambda),\lambda}\lr{S}} =
\Fcal_{s_b(\lambda),\lambda} \lr{S_1}
\qquad \lambda \in
[\lambda_1,\lambda_2]\;.
\end{equation}
Let us consider two cases  $\frac{|S_1|}{P(S_1)} <
\frac{|S|}{P(\mathrm{co}(S))}$ and 
 $\frac{|S_1|}{P(S_1)} \geq
\frac{|S|}{P(\mathrm{co}(S))}$.

\begin{enumerate}[]
\item[(b1)] \label{st:b1} In this case we  assume that 
$\frac{\abs{S_1}}{P(S_1)} < 
  \frac{\abs{S}}{P(\mathrm{co}(S))}$.
\begin{enumerate}[i)]
\item 
{\it Proof of \eqref{eq:S1}.} 
Recall from Lemma \ref{le:noTransversalSet} that in this situation we have 
$\Gamma_{s,\lambda}\lr{S} = \close{\raussen}{S}$.
In this case $\lambda_2 = \frac{R_2|S_1|}{R_2P(S_1)+|S_1|}$. We have
$$
s_a(\lambda) = \frac{\lambda}{R_1} \leq 1-\lambda \frac{P(S_1)}{|S_1|}.
$$
if and only if
$$
\lambda   \leq \frac{R_1|S_1|}{R_1P(S_1) + |S_1|}:=\bar   \lambda_1.
$$
Observe that $\bar   \lambda_1 \leq \lambda_2$ since $R_1\leq R_2$.

Let us work in the interval $s\in \left[
0,\frac{\lambda}{R_1} \right]$ for all $\lambda \in [\lambda_1,\lambda_2]$.  Let
us prove that
\begin{equation}\label{gamba1}
\Fcal_{0,\lambda}\lr{\mathrm{co}(S)}<  \Fcal_{0,\lambda}\lr{S_1}\;.
\end{equation}
Indeed, since $\frac{|S_1|}{P(S_1)}< \frac{|S|}{P(\mathrm{co}(S))}$, after
some simple computations
we deduce that
$$
\lambda_2 = \frac{R_2|S_1|}{R_2P(S_1) + |S_1|} <
\frac{|S_2|}{P(\mathrm{co}(S)) - P(S_1)}.
$$
Thus, if $\lambda \leq \lambda_2$, then $\lambda <
\frac{|S_2|}{P(\mathrm{co}(S)) - P(S_1)}$ and this is equivalent to
\eqref{gamba1}.

Moreover, by Proposition \ref{de:lambda} \ref{it:lambdab} (assuming that
$\frac{|S_1|}{P(S_1)}<\frac{|S|}{P(\mathrm{co}(S))}$),
for $\lambda < \lambda_2$ and $s=s_1(\lambda)=1-\lambda \frac{P(S_1)}{|S_1|}$ we
have
\begin{equation*}
\Fcal_{ s_1(\lambda),\lambda}\lr{S_1}<
\Fcal_{s_1(\lambda),\lambda}\lr{ \close{\frac{\lambda}{s_1(\lambda)}}{S}},
\end{equation*}
with equality if $\lambda = \lambda_2$,
and for $\lambda \in (\lambda_1,\lambda_2]$
and $s=\frac{\lambda}{R_1}$, we have
\begin{equation}\label{pv1}
 \Fcal_{\frac{\lambda}{R_1} ,\lambda}\lr{S_1}<
\Fcal_{\frac{\lambda}{R_1},\lambda}(S) =
\Fcal_{\frac{\lambda}{R_1},\lambda}\lr{ \close{R_1}{S}}
\end{equation}
(the first inequality being true because $\lambda>\lambda_1$).
Since
both functions $s\rightarrow \Fcal_{s,\lambda}(S_1)$ and $s\rightarrow 
\Fsl\lr{\close{\frac{\lambda}{s}}{S}}$
are continuous in $s$, they have to intersect for some $s \in 
\left[0,\min\set{ \frac{\lambda}{R_1},1-\lambda \frac{P(S_1)}{|S_1|}} \right]$.
Hence there is at least one value $s$ that satisfies \eqref{eq:S1}.
Let $s_b(\lambda)$ be the smallest value of $s$ satisfying \eqref{eq:S1}.

Notice that we have that $s_b(\lambda) < \frac{\lambda}{R_1}$ for any $\lambda
\in (\lambda_1,\lambda_2]$
and $s_b(\lambda) < 1-\lambda \frac{P(S_1)}{|S_1|}$ for any $\lambda <
\lambda_2$ (with equality if $\lambda=\lambda_2$).

\item 
\label{st:b2}
We show that $s_b(\lambda)$ is the unique value of
$s \in \left[0,\min\left\{\frac{\lambda}{R_1},1-\lambda
\frac{P(S_1)}{|S_1|}\right\} \right]$
satisfying \eqref{eq:S1}, if $\lambda \in (\lambda_1,\lambda_2]$.

Clearly, if $\lambda=\lambda_2$, then $s_b(\lambda)=1-\lambda
\frac{P(S_1)}{|S_1|}=\min\left\{\frac{\lambda}{R_1},1-\lambda
\frac{P(S_1)}{|S_1|}\right\} $ and \eqref{eq:S1} holds.
Our assertion is true in this case.

Assume that $\lambda \in (\lambda_1,\lambda_2)$. Let us prove that for any $s\in
\left(s_b(\lambda),\min\left\{\frac{\lambda}{R_1},1-\lambda
\frac{P(S_1)}{|S_1|}\right\}\right]$ we have
\begin{equation}\label{patataX1}
\Fcal_{s,\lambda}(S_1) <  \Fcal_{s,\lambda}(\mathrm{Close}_{\frac{\lambda}{s}}
(S))\;.
\end{equation}
Suppose that we find $s_b(\lambda) < t_1 < t_2 <
\min\left\{\frac{\lambda}{R_1},1-\lambda \frac{P(S_1)}{|S_1|}\right\}$ where
\begin{align}\label{s01}
\Fcal_{t_1,\lambda}(S_1) &< 
\Fcal_{t_1,\lambda}(\mathrm{Close}_{\frac{\lambda}{t_1}} (S))\;,\\
\label{s02}
\Fcal_{t_2,\lambda}(S_1) &> 
\Fcal_{t_2,\lambda}(\mathrm{Close}_{\frac{\lambda}{t_2}} (S))\;.
\end{align}

Let us compute $C_{t_1,\lambda}$. Observe that 
by \eqref{s01}
$S_1$  has less
energy
than $\Gamma_{t_1,\lambda}\lr{S}$, 
and $\Gamma_{t_1,\lambda}\lr{S}$  is better than $S$  because $t_1
< \frac{\lambda}{R_1}$.
Also $\Fcal_{t_1,\lambda}(S_1)< 0$  because $t_1 < 1-\lambda
\frac{P(S_1)}{|S_1|}$.
Thus $\Fcal_{t_1,\lambda}(S) = \Fcal_{t_1,\lambda}(S_1) +
\Fcal_{t_1,\lambda}(S_2) \leq \Fcal_{t_1,\lambda}(S_2)$.
Thus $C_{t_1,\lambda} = S_1$.

Let us compute $C_{t_2,\lambda}$. Observe that
$\Gamma_{t_2,\lambda}\lr{S}$  is better than $S_1$ (by
(\ref{s02})).
And $\Gamma_{t_2,\lambda}\lr{S}$  is better than $S$  because $t_2
< \frac{\lambda}{R_1}$.
Also $\Fcal_{t_2,\lambda}(S_1)\leq 0$  because $t_2 <  1-\lambda
\frac{P(S_1)}{|S_1|}$.
Thus $\Fcal_{t_2,\lambda}(S) = \Fcal_{t_2,\lambda}(S_1) +
\Fcal_{t_2,\lambda}(S_2) \leq \Fcal_{t_2,\lambda}(S_2)$.
Thus $C_{t_2,\lambda}=\Gamma_{t_2,\lambda}\lr{S}$.

It is not possible that $t_1 < t_2$ and the optimal set $C_{t_2,\lambda}$ 
contains the optimal set $C_{t_1,\lambda}$. 
We conclude that $\Fcal_{s,\lambda}(S_1) \leq 
\Fcal_{s,\lambda}\lr{\Gamma_{s,\lambda}\lr{S}}$
for all $s\in \left(s_b(\lambda),\min\left\{\frac{\lambda}{R_1},1-\lambda
\frac{P(S_1)}{|S_1|}\right\}\right)$.
The inequality has to be strict. Otherwise there would be two points
$t < t'$ such that the minima of $\Fcal_{t,\lambda}$ are both $S_1$ and
$\Gamma_{t,\lambda}\lr{S}$
and the minima of $\Fcal_{t',\lambda}$ are both $S_1$ and
$\Gamma_{t',\lambda}\lr{S}$.
Then $S_1$ would contain $\Gamma_{t',\lambda}\lr{S}$, a
contradiction.
Thus \eqref{patataX1} is proved.

\item 
\label{st:b3}
{\it 
Computation of $C_{s,\lambda}$ for $\lambda\in (\lambda_1,\lambda_2]$ 
and any $s$.}

If $s\in [0,s_b(\lambda))$ we argue as for $t_2$ and we deduce that the optimum
is
$\mathrm{Close}_{\frac{\lambda}{s}} (S)$. Letting $s \to s_b(\lambda)$ we deduce
that
$C_{s_b(\lambda),\lambda} = \mathrm{Close}_{\frac{\lambda}{s_b(\lambda)}} (S)$.

If $s\in \left( s_b(\lambda),\min \set{ \frac{\lambda}{R_1},1-\lambda
\frac{P(S_1)}{|S_1|} } \right]$ 
we argue as
for $t_1$ and we deduce that the optimum is $S_1$.

If $\min\left\{ \frac{\lambda}{R_1},1-\lambda
\frac{P(S_1)}{|S_1|}\right\}=\frac{\lambda}{R_1}$, i.e., if $\lambda\in
(\lambda_1,\bar\lambda_1]$,
let us compute the optimum for $s\in (\frac{\lambda}{R_1},1-\lambda
\frac{P(S_1)}{|S_1|}]$.
On this interval $\Fcal_{s,\lambda}(S_1)\leq 0$.
Thus $\Fcal_{s,\lambda}(S) = \Fcal_{s,\lambda}(S_1) + \Fcal_{s,\lambda}(S_2)
\leq \Fcal_{s,\lambda}(S_2)$.
Also (by the definition of $R_1$) on this interval
$\Fcal_{s,\lambda}(\mathrm{Close}_{\frac{\lambda}{s}} (S)) >
\Fcal_{s,\lambda}(S)$. Thus the optimum is either $S_1$ or $S$.
By monotonicity of the optimum with respect to $s$ and the fact that the optimum
in
 $(s_b(\lambda),\frac{\lambda}{R_1}]$  is $S_1$, we deduce that it is also $S_1$
 in $(\frac{\lambda}{R_1},1-\lambda \frac{P(S_1)}{|S_1|}]$.

If $\min\left\{\frac{\lambda}{R_1},1-\lambda
\frac{P(S_1)}{|S_1|}\right\}=\frac{\lambda}{R_1}$ and $s > 1-\lambda
\frac{P(S_1)}{|S_1|}$,
we have that $\Fcal_{s,\lambda}(S_1) > 0$ and the minimum is $\emptyset$.

If $\min\left\{ \frac{\lambda}{R_1},1-\lambda \frac{P(S_1)}{|S_1|}
\right\}=1-\lambda \frac{P(S_1)}{|S_1|}$,
i.e., if $\lambda\in (\bar\lambda_1,\lambda_2]$,
as in the previous paragraph the minimum for $s >1-\lambda \frac{P(S_1)}{|S_1|}$
is $\emptyset$.

Let us point out that for $\lambda=\lambda_2$ and for $s > s_b(\lambda_2)=
1-\lambda_2 \frac{P(S_1)}{|S_1|}$, we have that
$$
\Fcal_{s,\lambda_2}(\mathrm{Close}_{\frac{\lambda_2}{s}} (S)) >
\Fcal_{s_b(\lambda_2),\lambda_2}(\mathrm{Close}_{\frac{\lambda_2}{s_b(\lambda_2)
}} (S))
= \Fcal_{s_b(\lambda_2),\lambda_2}(S_1) = 0.
$$
Since also $\Fcal_{s,\lambda_2}(S_1) >0$, then $C_{s,\lambda_2} =\emptyset$.
\end{enumerate}

\item[(b2)] Assume that $\frac{|S_1|}{P(S_1)}\geq
\frac{|S|}{P\lr{\mathrm{co}(S) }}$.
\begin{enumerate}[i)]
\item 
{\it  
 Define $s_b(\lambda)$ for  $\lambda \in
(\lambda_1,\lambda_2]$.}
In this case $\lambda_2 = \frac{|S_2|}{P(\mathrm{co}(S))-P(S_1)}$.

Since $\lambda\leq \lambda_2 = \frac{|S_2|}{P(\mathrm{co}(S)-P(S_1)}$, then
\eqref{gamba1} holds.
On the other hand \eqref{pv1} also holds for any $\lambda \in
(\lambda_1,\lambda_2]$.
As in paragraph (b1) we identify a solution
$s_b(\lambda)\in \left[0,\frac{\lambda}{R_1} \right]$ of (\ref{eq:S1}). Let us
take the smallest one.
Let us prove that $s_b(\lambda) \in \left[0,1-\lambda \frac{P(S_1)}{|S_1|}
\right]$. Let
$s=s_1(\lambda)= 1-\lambda \frac{P(S_1)}{|S_1|}$. Then
$\Fcal_{s_1(\lambda),\lambda}(S_1) = 0$ and
$$
\Fcal_{s_1(\lambda),\lambda}(\mathrm{Close}_{\frac{\lambda}{s_1(\lambda)}} (S))=
P(\mathrm{Close}_{\frac{\lambda}{s_1(\lambda)}} (S)) +
\frac{s_1(\lambda)}{\lambda} |\mathrm{Close}_{\frac{\lambda}{s_1(\lambda)}}
(S)\setminus S| - \frac{P(S_1)}{|S_1|}|S|
$$
$$
\geq P(\mathrm{co}(S))- \frac{P(S_1)}{|S_1|}|S| \;  \geq \; 0 \;.
$$
Notice that the second inequality is strict if
$\frac{|S_1|}{P(S_1)}> \frac{|S|}{P(\mathrm{co}(S))}$, while the first is strict
if
$\frac{|S_1|}{P(S_1)} = \frac{|S|}{P(\mathrm{co}(S))}$ and $s_1(\lambda)> 0$.
In both cases, since $s_b(\lambda)$ is the smallest solution of (\ref{eq:S1}),
we have $s_b(\lambda) \in [0,1-\lambda \frac{P(S_1)}{|S_1|}]$.

If $\frac{|S_1|}{P(S_1)} = \frac{|S|}{P(\mathrm{co}(S))}$,
and $s_1(\lambda)= 0$, then $\lambda= \frac{|S_1|}{P(S_1)}=\lambda_2$ and
$$
\Fcal_{s_1(\lambda_2),\lambda_2}\lr{
\Gamma_{s_1(\lambda_2),\lambda_2}\lr{S}
} =
\Fcal_{s_1(\lambda_2),\lambda_2}(S_1) = 0,
$$
and we take $s_b(\lambda_2)= 0$.

\item {\it  Computation
of $C_{s,\lambda}$ for  $\lambda \in (\lambda_1,\lambda_2]$ and any $s$.}

If $\lambda \in (\lambda_1,\bar\lambda_1]$, then $s_b(\lambda) \leq
\frac{\lambda}{R_1} \leq 1-\lambda \frac{P(S_1)}{|S_1|}$.
Then the argument is identical to the same case in Step \ref{st:b3}.

If $\lambda \in (\bar \lambda_1,\lambda_2]$, then $ 1-\lambda
\frac{P(S_1)}{|S_1|}< \frac{\lambda}{R_1}$.
We have that 
$
\Fcal_{s,\lambda}(\Gamma_{s,\lambda}\lr{S})
\leq
\Fcal_{s,\lambda}(S)$ 
for all 
$s\in \left[0,\frac{\lambda}{R_1}\right]$.
If $s \in \left[ 0,1-\lambda \frac{P(S_1)}{|S_1|} \right]$ we have
$\Fcal_{s,\lambda}(S_1)\leq 0$.
Then $\Fcal_{s,\lambda}(S)=\Fcal_{s,\lambda}(S_1)+\Fcal_{s,\lambda}(S_2)\leq
\Fcal_{s,\lambda}(S_2)$.
Thus the optimal set for $s \in \left[0,1-\lambda \frac{P(S_1)}{|S_1|}\right]$ 
can
be only 
$\GammaS$ 
or $S_1$.

As in step \ref{st:b2} we prove that $s_b(\lambda)$ is the unique value of
$s \in \left[0,\frac{\lambda}{R_1} \right]$
satisfying \eqref{eq:S1} if $\lambda \in (\lambda_1,\lambda_2]$.
Then we proceed as in Step \ref{st:b3} to prove that
$C_{s,\lambda}= \GammaS$ 
if $s\in
[0,s_b(\lambda)]$,
and $C_{s,\lambda}=S_1$ if $s\in
\left(s_b(\lambda),1-\lambda\frac{P(S_1)}{|S_1|} \right]$.
For $s> 1-\lambda\frac{P(S_1)}{|S_1|}$, $C_{s,\lambda}=\emptyset$ since
$\Fcal_{s,\lambda}(S_1) > 0$.
\end{enumerate}
\end{enumerate}

\item  Let $\lambda\in (\lambda_2,\lambda_3]$. Again we distinguish two
cases $\fraceins < \fracco$ and $\fraceins \geq \fracco$.

\begin{enumerate}[]
\item[(c1)] 
{\it 
Assume that $\fraceins < \fracco$.}
\begin{enumerate}[i)]
\item Recall from Lemma \ref{le:noTransversalSet} we have again that
$\GammaS = \close{\raussen}{S}$.
In this case $\lambda_2=\frac{R_2|S_1|}{R_2P(S_1)+|S_1|}$  and $\lambda_3 =
\fracco$.
We look for a value $s_c(\lambda)$, $\lambda \in (\lambda_2,\lambda_3]$, such
that
\begin{equation} \label{eq:S2}
\Fcal_{s_c(\lambda),\lambda}\lr{\close{\frac{\lambda}{s_c(\lambda)}}{S}} = 0.
\end{equation}

Let $\lambda \in (\lambda_2,\lambda_3)$. If $s=0$, we have
$$\Fcal_{0,\lambda}\lr{co(S)}< 0.$$
Let $s_3(\lambda)=1-\lambda \frac{P(co(S))}{\abs{S}}$. Since by Lemma
\ref{le:FSL}
$\lambda\rightarrow
\mathcal{F}_{s_3(\lambda),\lambda}\lr{\close{\frac{\lambda}{s_3(\lambda)}}{S}}$ 
is strictly decreasing, then
\begin{equation*}
0=\Fcal_{0,\fracco}\lr{co(S)}<
\mathcal{F}_{s_3(\lambda),\lambda}\lr{\close{\frac{\lambda}{s_3(\lambda)}}{S}}.
\end{equation*}
Then for any $\lambda \in (\lambda_2,\lambda_3)$ there exists $s_c(\lambda) <
s_3(\lambda)$ such that
\eqref{eq:S2} holds.

Let $\lambda=\lambda_3$. Then $\mathcal{F}_{0,\lambda_3}(co(S))=0$ and
$s_3(\lambda_3) = 0$. Hence
$\mathcal{F}_{s_3(\lambda),\lambda}\lr{\close{\frac{\lambda}{s_3(\lambda)}}{S}}
=0$.
Since $\mathcal{F}_{s,\lambda_3}\lr{\close{\frac{\lambda_3}{s}}{S}}$ is strictly
increasing in $s$
we have that $\mathcal{F}_{s,\lambda_3}\lr{\close{\frac{\lambda_3}{s}}{S}} > 0$
for any 
$s\in \left(0,\frac{\lambda_3}{R_c(S)} \right]$.
Then $s_c(\lambda_3) = 0 = s_3(\lambda_3)$.

\item 
Let
us prove that
\begin{equation} \label{botifarraX2}
\hbox{\rm if $\lambda\geq \lambda_2 = \frac{R_2|S_1|}{R_2P(S_1)+|S_1|}$ and 
$s\in  \left[0,1-\lambda\frac{P(S_1)}{|S_1|}\right]$, then
$\Fcal_{s,\lambda}\lr{\close{\frac{\lambda}{s}}{S}} \leq 
\Fcal_{s,\lambda}(S_1)$.}
\end{equation}
Indeed, if $\lambda\geq \lambda_2$ and $s\in
\left[0,1-\lambda\frac{P(S_1)}{|S_1|}\right]$,
then $s \leq 1-\lambda\frac{P(S_1)}{|S_1|} \leq \frac{\lambda}{R_2}$.
That is, $\frac{s}{\lambda} \leq \frac{1}{R_2}$. Then, by Lemma
\ref{le:FSL} \ref{it:FSL3b}, we have
\begin{equation}\label{botifarraY1}
P(\mathrm{Close}_{\frac{\lambda}{s}} (S)) + \frac{s}{\lambda}
|\mathrm{Close}_{\frac{\lambda}{s}} (S)\setminus S| \leq
 P(\mathrm{Close}_{R_2} (S)) + \frac{1}{R_2} |\mathrm{Close}_{R_2} (S)\setminus
S|
=   \frac{P(S_1)}{|S_1|}|S|.
\end{equation}
Now, $\Fcal_{s,\lambda}\lr{\close{\frac{\lambda}{s}}{S}} \leq 
\Fcal_{s,\lambda}(S_1)$ if and only if
$$
P(\mathrm{Close}_{\frac{\lambda}{s}} (S)) + \frac{s}{\lambda}
|\mathrm{Close}_{\frac{\lambda}{s}} (S)\setminus S| \leq
P(S_1)-   \frac{1-s}{\lambda}|S_1| + \frac{1-s}{\lambda}|S|
= P(S_1) +   \frac{1-s}{\lambda}|S_2|.
$$
Thus, by (\ref{botifarraY1}), it is sufficient to prove that
$$
\frac{P(S_1)}{|S_1|}|S|\leq  P(S_1) +   \frac{1-s}{\lambda}|S_2|.
$$
But this is true since $\frac{P(S_1)}{|S_1|} \leq \frac{1-s}{\lambda}$. Hence
(\ref{botifarraX2}) holds.

Since  $\Fcal_{s,\lambda}(S_1) < 0$ for $s\in
\left[0,1-\lambda\frac{P(S_1)}{|S_1|} \right)$, then
(\ref{botifarraX2}) implies
\begin{equation*}
\hbox{\rm if $\lambda\geq \lambda_2 = \frac{R_2|S_1|}{R_2P(S_1)+|S_1|}$ and 
$s\in  \left[0,1-\lambda\frac{P(S_1)}{|S_1|} \right)$, then
$\Fcal_{s,\lambda}\lr{\close{\frac{\lambda}{s}}{S}} < 0$.}
\end{equation*}
In particular, we have that $s_c(\lambda) > 1-\lambda\frac{P(S_1)}{|S_1|}$.

Observe also that the inequality in (\ref{botifarraX2}) is strict if
$s\in \left[0,1-\lambda\frac{P(S_1)}{|S_1|} \right)$.

\item 
Let us prove  that the optimum in $[0,1-\lambda\frac{P(S_1)}{|S_1|}]$ is
$\mathrm{Close}_{\frac{\lambda}{s}} (S)$. By (\ref{botifarraX2}) it cannot be
$S_1$. On the other hand,
by the first paragraph of Step b1, if $\lambda \in (\lambda_2,\lambda_3]$, 
we have that $\lambda > \lambda_2\geq \bar{\lambda}_1$ and, therefore,
$\frac{\lambda}{R_1} > 1-\lambda\frac{P(S_1)}{|S_1|}$. By the last remark in
Step (a1),
$\Fcal_{s,\lambda}\lr{\close{\frac{\lambda}{s}}{S}} \leq  \Fcal_{s,\lambda}(S)$
in the interval
$[0,1-\lambda\frac{P(S_1)}{|S_1|}]$. On the other hand, on that interval, 
$\Fcal_{s,\lambda}(S)\leq \Fcal_{s,\lambda}(S_2)$.
Thus, the optimum is $\mathrm{Close}_{\frac{\lambda}{s}} (S)$ (its energy being
negative).

Again, the optimum in $(1-\lambda\frac{P(S_1)}{|S_1|},s_c(\lambda)]$ is
$\mathrm{Close}_{\frac{\lambda}{s}} (S)$. The optimum cannot be $S_1$ because
its energy is positive.
By the assumption $r_2\leq r_1$, the energy of $S_2$ is also positive. Thus, also is for
$S$. The optimum is
$\mathrm{Close}_{\frac{\lambda}{s}} (S)$.

Notice that the argument in the previous paragraph shows that 
for $s>s_c(\lambda)$ the optimum can only be either
$\GammaS$ or $\emptyset$. Thus, either there is a maximal interval of values of
$s$, say
$(s_c(\lambda),s_d(\lambda)]$, not reduced to $s_c(\lambda)$,
where (\ref{eq:S2}) holds, in which case $\mathrm{Close}_{\frac{\lambda}{s}}
(S)$ is the optimum up to $s_d(\lambda)$, or
the energy of $\mathrm{Close}_{\frac{\lambda}{s}} (S)$  becomes positive
immediately after $s_c(\lambda)$ and the optimum is
$\emptyset$. Thus, we may take $s_c(\lambda)$ as the maximal solution of
(\ref{eq:S2}).
\end{enumerate}

\item[(c2)] 
{
\it 
Assume that $ \fracco \leq \fraceins$.}

\noindent In this case,
$\lambda_2=\frac{|S_2|}{P(\mathrm{co}(S)) - P(S_1)}$.
Since $P(\mathrm{co}(S)) < P(S)$, we have $\Fcal_{0,\lambda}(\mathrm{co}(S)) <
\Fcal_{0,\lambda}(S)$.
If we take $\lambda > \lambda_2 $, we have $\Fcal_{0,\lambda}(S_1) <
\Fcal_{0,\lambda}(\mathrm{co}(S))$.
Thus, for $s\in  \left[ 0,1-\lambda\frac{P(S_1)}{|S_1|} \right]$ small enough
the optimum is $S_1$. By monotonicity of the level sets of $u_\lambda$, $S_1$ is
the optimum for all
$s\in \left[0,1-\lambda\frac{P(S_1)}{|S_1|} \right]$.
For $s > 1-\lambda\frac{P(S_1)}{|S_1|}$ is the emptyset.

\end{enumerate}

\item[(d)] Since $\lambda>\lambda_3 = \norm{\chi_S}_{*}$, 
$\Cs = \emptyset$ by Proposition \ref{pro:gnorm}. 
This concludes the proof.
\end{enumerate}
\qed

\begin{figure}
 \includegraphics[]{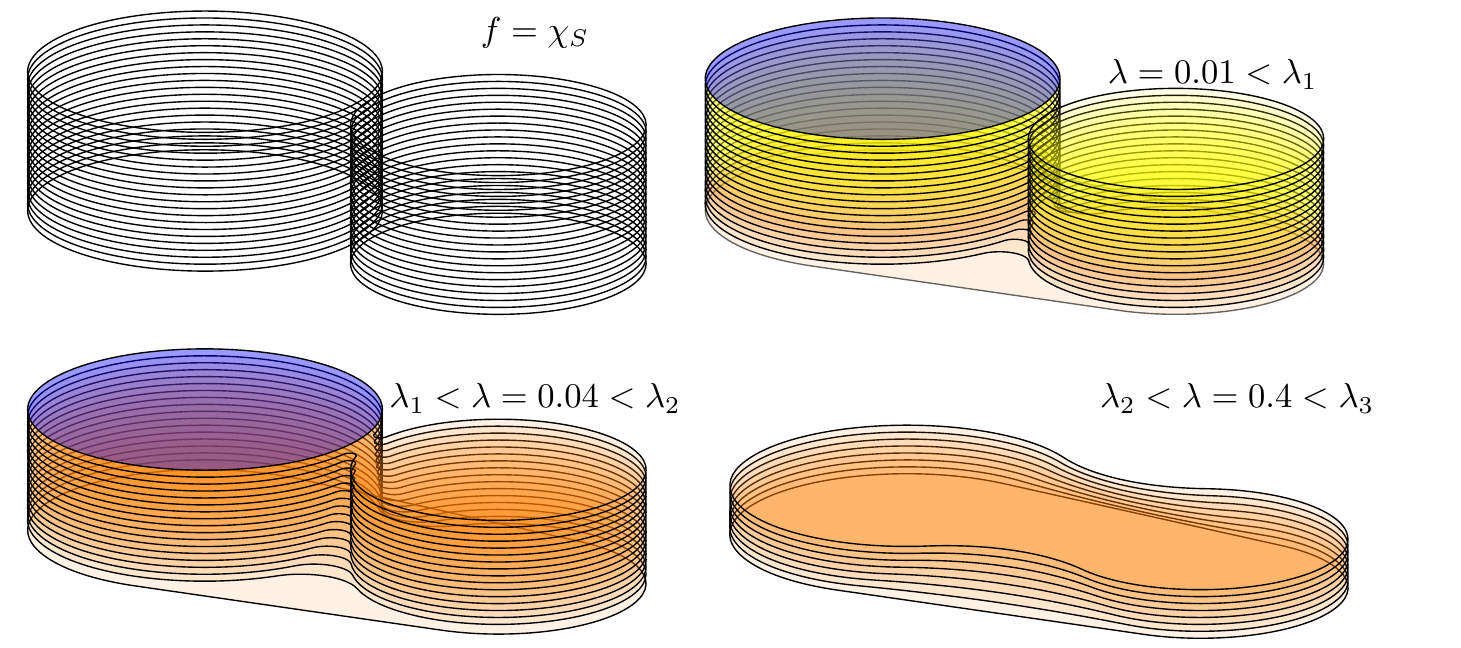}
\caption{Levelsets of the solutions $u_\lambda$ for different values of
$\lambda$. 
$S_1$, $S_2$ are balls with radius $r_1=1.2,r_2=1$, 
the distance between them is $d=0.05$. In this case
$\fraceins < \fracco$. 
}
\label{fi:solution}
\end{figure}


\begin{thebibliography}{10}

\bibitem{All09}
W. A. Allard.
\newblock Total variation regularization for image denoising. {III}.
  {E}xamples.
\newblock {\em SIAM J. Imaging Sci.} {\bf 2}(2), 532-568, 2009.


%
\bibitem{AC}
F. Alter and V. Caselles.
\newblock Uniqueness of the Cheeger set of a convex body.
\newblock {\em  Nonlinear Analysis, TMA} {\bf 70}, 32-44, 2009.


\bibitem{Alter.Caselles.Chambolle05}
F. Alter, V. Caselles, A. Chambolle.
\newblock A characterization of convex calibrable sets in $\R^N$.
\newblock {\em Math. Ann.} {\bf 332}, 329-366, 2005.

%
\bibitem{Alter.Caselles.Chambolle05b}
F. Alter, V. Caselles, A. Chambolle.
\newblock Evolution of Convex Sets in the Plane by the Minimizing Total Variation
Flow.
\newblock {\em Interfaces and Free Boundaries} {\bf 7}, 29-53, 2005.

\bibitem{AmbFusPal00}
L.~Ambrosio, N.~Fusco, and D.~Pallara.
\newblock Functions of Bounded Variation and Free Discontinuity Problems.
\newblock Oxford Mathematical Monographs, 2000.

\bibitem{Ambrosio}
L.~Ambrosio.
\newblock Corso introduttivo alla teoria geometrica della misura
ed alle superfici minime.
\newblock Scuola Normale Superiore, Pisa, 1997.

\bibitem{ACMBook}
F. Andreu, V. Caselles, and J.M. Maz\'on.
\newblock Parabolic Quasilinear Equations Minimizing Linear Growth Functionals.
\newblock
Progress in Mathematics 223, Birkhauser Verlag, 2004.

\bibitem{BeCaNo:00}
G. Bellettini, V. Caselles, and M. Novaga.
\newblock The Total Variation Flow in $\R^N$.
\newblock  {\em J. Differential Equations} {\bf 184}, 475-525, 2002.

\bibitem{BeCaNo:05}
G. Bellettini, V. Caselles, and M. Novaga.
\newblock Explicit solutions of
the eigenvalue problem $- {\rm div} \left(\frac{Du}{\vert Du
\vert} \right) = u$ in $\R^2$.
\newblock {\em SIAM Journal on Mathematical Analysis}
{\bf 36}, 1095--1129, 2005.

\bibitem{buades2006review}
A. Buades, B. Coll,  and J.M. Morel.
\newblock A review of image denoising algorithms, with a new one.
\newblock {\em Multiscale Modeling and Simulation}
{\bf 4}, 490--530, 2006.

\bibitem{CCN:06}
V. Caselles, A. Chambolle, and M. Novaga.
\newblock Uniqueness of the Cheeger set of a convex body.
\newblock {\em Pacific Journal of Mathematics} {\bf 232}, 77-90, 2007.

\bibitem{CCN-MMS}
V. Caselles, A. Chambolle, and M. Novaga.
\newblock The discontinuity set of solutions of the {TV} denoising problem and
some extensions.
\newblock {\em SIAM Mult. Model. Simul.} {\bf 6}, 879--894, 2008.


\bibitem{CasChaNov11}
V. Caselles, A. Chambolle and M. Novaga. 
\newblock{Total variation in imaging}.
\newblock {\em Handbook of Mathematical Methods in Imaging}, 
Springer, 1016-1057, 2011. 





\bibitem{ChaCasCre10}
A. Chambolle, V. Caselles, D. Cremers, M. Novaga and T. Pock.
\newblock{An introduction to Total Variation for Image Analysis, 
in Theoretical Foundations and Numerical Methods for Sparse Recovery}.
\newblock {\em De Gruyter, 
Radon Series Comp. Appl. Math.}, 
{\bf 9}, 
263-340, 
2010. 


\bibitem{KLR} B. Kawohl and T. Lachand-Robert.
\newblock Characterization of Cheeger sets for convex
subsets of the plane. \newblock  Pacifc J. Math. {\bf 225}(1), 103--118, 2006.

\bibitem{Giusti:78} E. Giusti.
\newblock On the equation of surfaces of prescribed mean curvature. Existence and uniqueness
without boundary conditions.
\newblock Invent. Math. {\bf 46}, 111--137, 1978.

\bibitem{Meyer}
Y. Meyer.
\newblock Oscillating patterns in image processing and nonlinear evolution equations.
\newblock The fifteenth Dean Jacqueline B. Lewis memorial lectures. University Lecture Series, 22.
American Mathematical Society, Providence, RI, 2001.

\bibitem{Soi03}
P. Soille.
	\newblock {Morphological Image Analysis: Principles and Applications.}
	\newblock {Springer.}
	\newblock 2004.

\end{thebibliography}
\end{document}